\newcommand\mycolor[1]{}
\setlist[enumerate]{itemsep=0.3ex, topsep=0.3ex, label={\rm(\arabic*)}}
\setlist[itemize]{itemsep=0.3ex, topsep=0.3ex, leftmargin=4ex}
\newtheorem{subtheorem}[subsubsection]{Theorem}
\newtheorem{sublemma}[subsubsection]{Lemma}
\newtheorem{subcorollary}[subsubsection]{Corollary}
\newtheorem{subremark}[subsubsection]{Remark}
\newtheorem{subexample}[subsubsection]{Example}
\newtheorem{subdefinition}[subsubsection]{Definition}
\newcommand\testshape{family=\f@family; series=\f@series; shape=\f@shape.}
\def\myemphInternal#1{\if n\f@shape%
\begingroup\itshape #1\endgroup\/%
\else\begingroup\sf\itshape\small #1\endgroup%
\fi}
\def\myemph{\futurelet\testchar\MaybeOptArgmyemph}
\def\MaybeOptArgmyemph{\ifx[\testchar \let\next\OptArgmyemph
                 \else \let\next\NoOptArgmyemph \fi \next}
\def\OptArgmyemph[#1]#2{\index{#1}\myemphInternal{#2}}
\def\NoOptArgmyemph#1{\myemphInternal{#1}}
\newcommand\monoArrow{\lhook\joinrel\rightarrow}
\newcommand\xmonoArrow[1]{\lhook\joinrel\xrightarrow{~#1~}}
\newcommand\amatr[4]{\Bigl(\!\begin{smallmatrix}#1\ &#2\\[0.5mm] #3\ &#4 \end{smallmatrix}\!\Bigr)}
\newcommand\ddd[2]{\tfrac{\partial#1}{\partial#2}}
\newcommand\Kman{K}
\newcommand\Mman{M}
\newcommand\Nman{N}
\newcommand\Pman{P}
\newcommand\Qman{Q}
\newcommand\Rman{R}
\newcommand\Uman{U}
\newcommand\Vman{V}
\newcommand\Wman{W}
\newcommand\Xman{X}
\newcommand\bD{\mathbb{D}}
\newcommand\bN{\mathbb{N}}
\newcommand\bR{\mathbb{R}}
\newcommand\bZ{\mathbb{Z}}
\newcommand\HH{\mathcal{H}}
\newcommand\UU{\mathcal{U}}
\newcommand\VV{\mathcal{V}}
\newcommand\XX{\mathcal{X}}
\newcommand\YY{\mathcal{Y}}
\newcommand\id{\mathrm{id}}          
\newcommand\Int{\mathrm{Int}}        
\newcommand\rank{\mathsf{rank}}      
\newcommand\ev{\mathrm{ev}}          
\newcommand\eps{\varepsilon}                 
\newcommand\GL{\mathrm{GL}}
\newcommand\unit[1]{e_{#1}}
\newcommand\Diff{\mathcal{D}}       
\newcommand\Emb{\mathrm{Emb}}       
\newcommand\Mat[2]{M(#1,#2)}        
\newcommand\DiffId{\Diff_{\id}}     
\newcommand\Cr[1]{\mathcal{C}^{#1}}
\newcommand\Cinfty{\mathcal{C}^{\infty}}
\newcommand\Crm[3]{\Cr{#1}\!\left(#2,#3\right)}
\newcommand\Ci[2]{\mathcal{C}^{\infty}(#1,#2)}               
\newcommand\Wtop{\mathsf{W}}
\newcommand\Stop{\mathsf{S}}
\newcommand\Wr[1]{\Wtop^{#1}}
\newcommand\Sr[1]{\Stop^{#1}}
\newcommand\Jr[3]{J^{#1}(#2,#3)}
\newcommand\func{f}
\newcommand\gfunc{g}
\newcommand\dif{h}
\newcommand\gdif{g}
\newcommand\px{x}
\newcommand\py{y}
\newcommand\pu{u}
\newcommand\pv{v}
\newcommand\pw{w}
\newcommand\pvi[1]{\pv_{#1}}
\newcommand\pxi[1]{\px_{#1}}
\newcommand\pwi[1]{\pw_{#1}}
\newcommand\pyi[1]{\py_{#1}}
\newcommand\Epr{{\mycolor{Green}p}}
\newcommand\Fpr{{\mycolor{Green}q}}
\newcommand\ETotMan{{\mycolor{ForestGreen}E}}
\newcommand\ESingMan{{\mycolor{NavyBlue}B}}
\newcommand\FTotMan{{\mycolor{ForestGreen}F}}
\newcommand\FSingMan{{\mycolor{NavyBlue}C}}
\newcommand\Efib[1]{\ETotMan_{#1}}
\newcommand\vvect[1]{\mathsf{Vert}(#1)}                                 
\newcommand\tang[2][\empty]{\mathsf{T}\ifx\empty\relax\else_{#1}\fi#2}  
\newcommand\tfib[1]{\tang[\!\mathsf{fib}]{#1}}                          
\newcommand\rr{r}
\newcommand\mmm{l}
\newcommand\aConst{0.2}
\newcommand\bConst{0.8}
\newcommand\Hhom{{\mycolor{red}H}}    
\newcommand\Ghom{{\mycolor{blue}G}}   
\newcommand\hrtrho{{\mycolor{red}\widehat{\rho}}}
\newcommand\hstrho{{\mycolor{red}\widehat{\sigma}}}
\newcommand\rtrho{{\mycolor{red}\kappa}}
\newcommand\strho{{\mycolor{red}\lambda}}
\newcommand\rtr{{\mycolor{blue}\rho}}
\newcommand\sts{{\mycolor{blue}\sigma}}
\newcommand\FLP{$\Foliation$-leaf preserving}
\newcommand\AH{admissible homogeneous}
\newcommand\STL{{\mycolor{red}{star-like}}}
\newcommand\DiffInv[3][\empty]{\Diff_{inv}(#2,#3\ifx\empty #1\relax\else,#1\fi)}
\newcommand\DiffFix[3][\empty]{\Diff_{fix}(#2,#3\ifx\empty #1\relax\else,#1\fi)}
\newcommand\DiffNb[3][\empty]{\Diff_{nb}(#2,#3\ifx\empty #1\relax\else,#1\fi)}
\newcommand\VBAut[2][\empty]{\GL(#2\ifx\empty #1\relax\else,#1\fi)}
\newcommand\DiffLP{\Diff}  
\newcommand\DiffLPInv[3][\empty]{\DiffLP_{inv}(#2,#3\ifx\empty#1\relax\else,#1\fi)}
\newcommand\DiffLPFix[3][\empty]{\DiffLP_{fix}(#2,#3\ifx\empty#1\relax\else,#1\fi)}
\newcommand\DiffLPNb[3][\empty]{\DiffLP_{nb}(#2,#3\ifx\empty#1\relax\else,#1\fi)}
\newcommand\AutE{\VBAut{\ETotMan}}
\newcommand\AutES{\VBAut[\ESingMan]{\ETotMan}}
\newcommand\AutEXB{\VBAut[\ESingMan]{\ETotMan}}
\newcommand\coo[3]{#1_{#2,#3}}
\newcommand\cco[2]{#1_{#2}}
\newcommand\adelta{{\mycolor{blue}\delta}}
\newcommand\bdelta{{\mycolor{blue}\sigma}}
\newcommand\wh[1]{\tilde{#1}}
\newcommand\mhdt[4]{#1_{#2,#3,#4}}
\newcommand\CUF{\mathcal{C}^{\infty}_0(\Uset,\bR^{\fxdim}\times\bR^{\frdim})}
\newcommand\Nbh[4]{\mathcal{N}^{#2}_{#3,#4}(#1)}  
\newcommand\vertspace{vert}
\newcommand\LEF{\mathcal{L}(\ETotMan,\FTotMan)}
\newcommand\BNbh{{\mycolor{YellowOrange}\Nman}}
\newcommand\UNbh{{\mycolor{MidnightBlue}\Uman}}
\newcommand\CNF{\mathcal{C}^{\infty}_{0}(\BNbh,\FTotMan)}
\newcommand\CVNF{\mathcal{C}^{\infty}_{\vertspace}(\BNbh,\FTotMan)}
\newcommand\ClNF{\mathcal{C}^{\infty}_{l,nb}(\BNbh,\FTotMan)}
\newcommand\LNF{\mathcal{L}(\BNbh,\FTotMan)}
\newcommand\EmbNF{\mathcal{E}_{0}(\BNbh,\FTotMan)}
\newcommand\EmbNE{\mathcal{E}_{0}(\BNbh,\ETotMan)}
\newcommand\ChNE{\mathcal{C}^{\infty}_{0}(\hat{\BNbh},\ETotMan)}
\newcommand\CNE{\mathcal{C}^{\infty}_{0}(\BNbh,\ETotMan)}
\newcommand\xcoord{{\mycolor{CadetBlue}\beta}}
\newcommand\rcoord{{\mycolor{CadetBlue}\gamma}}
\newcommand\BComp{K}
\newcommand\CComp{L}
\newcommand\aafunc{f}
\newcommand\bbfunc{g}
\newcommand\nrm[1]{\|#1\|}
\newcommand\BChart{\Phi}
\newcommand\CChart{\Psi}
\newcommand\BOset{{\mycolor{red}\Vman}}
\newcommand\COset{{\mycolor{red}\Wman}}
\newcommand\Uset{{\mycolor{Blue}\Uman}}
\newcommand\tndif{{\mycolor{Green}\alpha}}
\newcommand\exdim{{\mycolor{Sepia}\mathsf{b}}}
\newcommand\erdim{{\mycolor{OrangeRed}\mathsf{m}}}
\newcommand\fxdim{{\mycolor{RoyalPurple}\mathsf{c}}}
\newcommand\frdim{{\mycolor{RoyalBlue}\mathsf{n}}}
\newcommand\ix{{\mycolor{red}i}}
\newcommand\iv{{\mycolor{Green}j}}
\newcommand\Lin{\mathcal{L}}       
\newcommand\Foliation{\mathcal{F}}
\newcommand\GFoliation{\mathcal{G}}
\newcommand\DiffFoliation{\DiffLP(\Foliation)}
\newcommand\DiffFoliationFixedSigma{\DiffLPFix{\Foliation}{\ESingMan}}
\newcommand\DiffnbFolMS{\DiffLPNb{\Foliation}{\ESingMan}}
\newcommand\DiffFolMSvb{\DiffLPFix[\Epr]{\Foliation}{\ESingMan}}
\newcommand\EAut{E_{aut}}
\newcommand\Sect{\mathcal{S}}
\newcommand\SectAT{\Sect(\EAut)}
\newcommand\vbops{\Epr_{\erdim}}   
\newcommand\vbr{{\mycolor{red}q}}
\newcommand\DiffInvMS{\DiffInv{\Mman}{\ESingMan}}
\newcommand\DiffFixMS{\DiffFix{\Mman}{\ESingMan}}
\newcommand\DiffNbMS{\DiffNb{\Mman}{\ESingMan}}
\newcommand\DiffInvLMS{\DiffInv[\Epr]{\Mman}{\ESingMan}}
\newcommand\DiffFixLMS{\DiffFix[\Epr]{\Mman}{\ESingMan}}
\newcommand\DiffLPInvFolS{\DiffLPInv{\Foliation}{\ESingMan}}
\newcommand\DiffLPFixFolS{\DiffLPFix{\Foliation}{\ESingMan}}
\newcommand\DiffLPNbFolS{\DiffLPNb{\Foliation}{\ESingMan}}
\newcommand\DiffLPInvLFolS{\DiffLPInv[\Epr]{\Foliation}{\ESingMan}}
\newcommand\DiffLPFixLFolS{\DiffLPFix[\Epr]{\Foliation}{\ESingMan}}
\newcommand\FSP[2]{\mathcal{F}(#1,#2)}
\newcommand\LinLP[1]{\mathcal{L}^{*}(#1)}
\newcommand\DiffLPInvGFolS{\DiffLPInv{\GFoliation}{0}}
\newcommand\DiffLPInvLGFolS{\DiffLPInv[\Epr]{\GFoliation}{0}}
\newcommand\DiffLPFunc{\Diff^{*}_{lin}(\Foliation)}
\newcommand\zoom[1]{\delta_{#1}}
\newcommand\HHE{\widetilde{\HH}}
\newcommand\CPSNFx[2]{\mathcal{C}^{\infty}_{+,#1,#2}(\BNbh,\FTotMan)}
\newcommand\CPNFx[1]{\mathcal{C}^{\infty}_{-,#1}(\BNbh,\FTotMan)}
\newcommand\CSNFx[1]{\mathcal{C}^{\infty}_{\vert,#1}(\BNbh,\FTotMan)}
\newcommand\CPSNFab{\CPSNFx{a}{b}}
\newcommand\CPNFa{\CPNFx{a}}
\newcommand\CSNFb{\CSNFx{b}}
\newcommand\EPSAB[4]{\mathcal{E}^{\infty}_{+,#1,#2}(#3,#4)}
\newcommand\EPSNFx[2]{\EPSAB{#1}{#2}{\BNbh}{\FTotMan}}
\title{Foliated and compactly supported isotopies of regular neighborhoods}
\author{Oleksandra Khokhliuk}
\address{Liceum ``Educator'', Yelyzavety Chavdar St, 11а, Kyiv, 02140, Ukraine}
\email{khokhliyk@gmail.com}
\author{Sergiy Maksymenko}
\address{Algebra and Topology Department, Institute of Mathematics NAS of Ukraine \\ 
Teresh\-chen\-kivska str., 3, Kyiv, 01024, Ukraine}
\email{maks@imath.kiev.ua}
\keywords{foliation, diffeomorphism, homotopy type}
\subjclass[2020]{
    57R30, 
    57T20
}
\begin{document}

\maketitle

\begin{abstract}
Let $\mathcal{F}$ be a foliation with a ``singular'' submanifold $B$ on a smooth manifold $M$ and $p:E \to B$ be a regular neighborhood of $B$ in $M$.
Under certain ``homogeneity'' assumptions on $\mathcal{F}$ near $B$ we prove that every leaf preserving diffeomorphism $h$ of $M$ is isotopic via a leaf preserving isotopy to a diffeomorphism which coincides with some vector bundle morphism of $E$ near $B$.
This result is mutually a foliated and compactly supported variant of a well known statement that every diffeomorphism $h$ of $\mathbb{R}^n$ fixing the origin is isotopic to the linear isomorphism induced by its Jacobi matrix of $h$ at $0$.
We also present applications to the computations of the homotopy type of the group of leaf preserving diffeomorphisms of $\mathcal{F}$.
\end{abstract}

\section{Introduction}
Let $\Mman$ be a smooth $n$-manifold and $\ESingMan$ be a submanifold whose connected components may have distinct dimensions.
Then a \myemph{singular foliation on $\Mman$ of dimension $k$ and a singular set $\ESingMan$} is a partition $\Foliation$ of $\Mman$ such that every connected component of $\ESingMan$ is an element of $\Foliation$, and the induced partition of $\Mman\setminus\ESingMan$ is a $k$-dimension foliation in a usual sense, e.g.~\cite{CandelConlon:Foliations1:2000}.

Denote by $\DiffFoliation$ the group of diffeomorphisms of $\Mman$ leaving invariant each leaf of $\Foliation$, and by $\DiffFoliationFixedSigma$, resp.\ $\DiffnbFolMS$, its subgroup consisting of diffeomorphisms fixed on $\ESingMan$, resp.\ on some neighborhood of $\ESingMan$.
The paper is motivated by study of the homotopy type of $\DiffFoliation$.

Notice that we have a natural group homomorphism $\rho:\DiffFoliation \to \Diff(\ESingMan)$ associating to each $\dif\in\DiffFoliation$ its restriction to $\ESingMan$, that is $\rho(\dif) = \dif|_{\ESingMan}$.
Evidently, $\DiffFoliationFixedSigma$ is the kernel of $\rho$.

The present authors shown in~\cite{KhokhliyukMaksymenko:IndM:2020} that if $\Foliation$ is of codimension $1$ and its singularities are of Morse-Bott type, then $\rho$ is a locally trivial fibration over its image.
For instance, this holds when there exists a Morse-Bott function $\func:\Uman\to\bR$ on some neighborhood $\Uman$ of $\ESingMan$ such that $\Foliation$ in $\Uman$ consists of connected components of level sets of $\func$ and $\ESingMan$ is a union of all critical submanifolds of $\func$.

That statement can be regarded as a foliated analogue of well known results by J.~Cerf~\cite{Cerf:PMIHES:1970}, R.~Palais~\cite{Palais:CMH:1960} and E.~Lima~\cite{Lima:CMH:1964} on local triviality of the restriction maps for embeddings.

In particular, one gets a long exact sequence of homotopy groups of the fibration $\rho$:
\begin{equation}\label{equ:rho_exact_seq}
\cdots
\ \to \ \pi_{i+1}\Diff(\ESingMan)
\ \to \ \pi_i \DiffFoliationFixedSigma
\ \to \ \pi_i \DiffFoliation
\ \to \ \pi_{i}\Diff(\ESingMan)
\ \to \ \cdots,
\end{equation}
and thus can reduce the computation of the homotopy type of $\DiffFoliation$ to two more simple problems of computing the homotopy types of $\Diff(\ESingMan)$ and $\DiffFoliationFixedSigma$.
This is especially useful if the dimension of $\ESingMan$ is $1$ or $2$ since in those cases the homotopy type of $\Diff(\ESingMan)$ is explicitly computed, \cite{Smale:ProcAMS:1959, EarleEells:BAMS:1967, EarleEells:DG:1970, EarleSchatz:DG:1970, Gramain:ASENS:1973}.

In the present paper we make further step in studying the homotopy type of $\DiffFoliation$ and obtain certain information about $\DiffFoliationFixedSigma$.
Let $\Epr:\ETotMan\to\ESingMan$ be a structure of a vector bundle on some regular neighborhood $\ETotMan$ of $\ESingMan$ in $\Mman$, and $\AutES$ the group of $\Cinfty$ vector bundle self-isomorphisms $\dif:\ETotMan\to\ETotMan$ over $\id_{\ESingMan}$, i.e.\ satisfying $\Epr = \dif\circ \Epr$.
We will prove that, under certain \myemph{homogeneity} assumptions on $\Foliation$ near $\ESingMan$, the group $\DiffFoliationFixedSigma$ can be deformed into a subgroup $\DiffFolMSvb$ consisting of diffeomorphisms which coincide near $\ESingMan$ with vector bundle isomorphisms of $\ETotMan$, see Theorem~\ref{th:linearization}.
This is mutually a \myemph{parametrized}, \myemph{foliated}, and \myemph{compactly supported} variant of the well known result that each self diffeomorphism $\dif:\bR^n\to\bR^n$ with $\dif(0)=0$ is isotopic to the linear isomorphism given by the Jacobi matrix of $\dif$ at $0$.
Moreover, the result also include not only foliations but even ``continuous'' and possibly ``fractal'' partitions having certain ``homogeneity'' properties, see Example~\ref{exmp:non_smooth_foliations}.

Evidently, we have another restriction homomorphism $\rtr:\DiffFolMSvb\to\AutES$ associating to each $\dif\in\DiffFolMSvb$ a unique vector bundle isomorphism $\hat{\dif}\in\AutES$ such that $\dif=\hat{\dif}$ near $\ESingMan$, and the kernel of $\rtr$ is $\DiffnbFolMS$.
This allows to reduce the study of the homotopy type of $\DiffFoliationFixedSigma$ to two simpler groups: the image of $\DiffFolMSvb$ in $\AutES$ and the group $\Diff_{nb}(\Foliation,\ESingMan)$.

Further notice that for connected $\ESingMan$ the group $\AutES$ can be identified with the space of $\Cinfty$ sections of some principal $\GL(\bR^{n})$-bundle, and thus can be studied by purely homotopic methods, see Lemma~\ref{lm:sectAutE}.
On the other hand, the group $\DiffnbFolMS$ consists of diffeomorphisms supported out of the set $\ESingMan$ of singular leaves, so in this group we ``get rid of the singularity $\ESingMan$''.
In particular, if $\Mman$ is compact, then $\DiffnbFolMS$ is the group of compactly supported diffeomorphism of the non-singular (usual) foliation on $\Mman\setminus\ESingMan$, which is widely studied, \cite{Tsuboi:Fol:2006, LechRybicki:BCP:2007, Tsuboi:ASPM:2008, Rybicki:APM:2011, Fukui:PRIMS:2012, Fukui:JMSJ:2012, Rybicki:AMB:2019}.
The applications of obtained results and concrete computations will be presented elsewhere.

\subsection{Main technical result}\label{sect:main_tech_theorem}
Recall that every diffeomorphism $\dif: (\bR^{\erdim},0)\to(\bR^{\erdim},0)$ is isotopic to the linear map $J_{\dif}:\bR^{m}\to\bR^{m}$ defined by the Jacobi matrix $J_{\dif}$ of $\dif$ at $0$ via the following isotopy $H:[0;1]\times\bR^{\erdim}\to\bR^{\erdim}$, $H_{\tau}(\pv)=\frac{\dif(\tau\pv)}{\tau}$ for $\tau>0$, and $H_0(\pv)= J_{\dif}\pv$.
The smoothness of $H$ is guaranteed by the Hadamard lemma, see Lemma~\ref{lm:Hadamard}.
This kind of isotopies is used for the proof of existence of isotopies between (open) regular neighborhoods of a submanifold in an ambient manifold, e.g.~\cite[Chapter~4, Theorem~5.3]{Hirsch:DiffTop}.
On the other hand, such isotopies are not compactly supported and therefore it is not always possible to extend them globally.
The following Theorem~\ref{th:linearization_of_embeddings} is a compactly supported variant of the above observation, where we replace $\tau$ with some function $\phi$.

Let us fix once and for all a $\Cinfty$ function $\mu:\bR\to[0;1]$ such that $\mu=0$ on $[0;\aConst]$ and $\mu=1$ on $[\bConst;+\infty)$.
Let also $\Epr:\ETotMan\to\ESingMan$ be a $\Cinfty$ vector bundle over a smooth manifold $\ESingMan$ equipped with some orthogonal structure (see~\S\ref{sect:vb:triv_atlas}), $\nrm{\cdot}:\ETotMan\to[0;+\infty)$ be the corresponding norm, 
\[ \Rman_{\eps}=\{\px\in\ETotMan \mid \nrm{\px}\leq\eps\} \] for $\eps>0$ be the closed tubular neighborhood of $\ESingMan$ in $\ETotMan$, $\BNbh\subset\ETotMan$ be a smooth submanifold being also a neighborhood of $\ESingMan$, and 
\begin{align*} 
    \CNE   &= \{ \dif\in \Ci{\BNbh}{\ETotMan} \mid \dif(\ESingMan)\subset\ESingMan\}, \\
    \EmbNE &= \{ \dif\in\CNE \mid \dif \ \text{is an embedding}\}.
\end{align*}
Notice that each $\dif\in\CNE$ induces a certain vector bundle morphism $\tfib{\dif}:\ETotMan\to\ETotMan$ which can be regarded as a ``partial derivative of $\dif$ along $\ESingMan$ in the direction of fibers'', see below~\S\ref{sect:tangent_map_along_fibers} and~\eqref{equ:Tfibh}.

Now, given a function $\adelta:\EmbNE\to(0;+\infty)$ continuous with respect to the weak Whitney topology $\Wr{\infty}$, see~\S\ref{sect:whitney_topologies}, define the following maps
\begin{align}
    \label{equ:map_phi}
    &\phi:\EmbNE\times[0;1]\times\ETotMan\to[0;1],&
    &\phi(\dif,t,\px) = t + (1-t) \mu\bigl(\tfrac{\nrm{\px}}{\adelta(\dif)}\bigr), \\
    \label{equ:map_H}
    &\Hhom:\EmbNE \times (0;1]\to \CNE, &
    &\Hhom(\dif,t)(\px) = \frac{\dif(\phi(\dif,t,\px)\,\px)}{\phi(\dif,t,\px)}.
\end{align}
Evidently, for all $\dif\in\EmbNE$ we have that 
\begin{enumerate}[label={(\Alph*)}]
\item\label{enum:HH:H_cont}
$\Hhom$ is $\Wr{\infty,\infty}$-continuous, i.e.\ continuous with respect to the corresponding topologies $\Wr{\infty}$;
\item\label{enum:HH:H_is_def}
$\phi(\dif,t,\px)>0$ for $t>0$, so $\Hhom$ is well-defined;
\item\label{enum:HH:H1_id}
$\phi(\dif,1,\px) \equiv 1$, whence $\Hhom(\dif,1) = \dif$, i.e.\ $\Hhom_1=\id_{\EmbNE}$;
\item\label{enum:HH:H_support}
$\phi(\dif,t,\px) \equiv 1$ for $\nrm{\px} \geq \bConst\adelta(\dif)$, whence $\Hhom(\dif,1) = \dif$ on $\ETotMan\setminus\Rman_{\bConst\adelta(\dif)}$.
\end{enumerate}
Notice that, depending on $\adelta$, the map $\Hhom(\dif,t)$ is not necessarily an embedding, even if $\dif$ is so.
One of the difficulties which is not presented in the case when $\ESingMan$ is a point, is that an embedding $\dif:\BNbh\to\ETotMan$ \myemph{does not necessarily preserves the transversal directions of fibers of $\ETotMan$ at $\ESingMan$}.
Moreover, the deformation~\eqref{equ:map_H} always preserves projection of the image of the tangent spaces of fibers under the tangent map $\tang{\dif}$ onto $\tang{\ESingMan}$, and therefore $\Hhom(\dif,t)$ can not be made arbitrary close to $\dif$ in Whitney topology $\Wr{\rr}$ for $\rr\geq1$, see Lemma~\ref{lm:general_linearization}\ref{enum:HH_cont:not_cont}.

Nevertheless, the following theorem claims that if $\adelta$ is sufficiently small, one can guarantee that $\Hhom(\dif,t)$ for all $t\in(0;1]$ is an embedding, and has a limit at $t\to0$ which coincides with a vector bundle morphism. 
\begin{subtheorem}[Linearization theorem]\label{th:linearization_of_embeddings}
Let $\ESingMan$ be a compact manifold, and $\Epr:\ETotMan\to\ESingMan$ a $\Cinfty$ vector bundle equipped with some orthogonal structure.
Then there exists a strictly positive $\Wr{\infty}$-continuous function $\adelta:\EmbNE\to(0;+\infty)$ such that the image of the map~\eqref{equ:map_H} is contained in $\EmbNE$ and $\Hhom$ extends to a $\Wr{\infty}$-continuous map $\Hhom:\EmbNE \times[0;1]\to\EmbNE$ such that for all $\dif\in\EmbNE$,
\begin{enumerate}
\item\label{enum:mainth:H_Rd}
$\Hhom(\dif,t)(\Rman_{\delta(\dif)}) \subset \BNbh$ for $t\in[0;1]$; 
\item\label{enum:mainth:H_0}
$\Hhom(\dif,0)$ coincide with the vector bundle morphism $\tfib{\dif}:\ETotMan\to\ETotMan$ on $\Rman_{\aConst\delta(\dif)}$.
\end{enumerate}
\end{subtheorem}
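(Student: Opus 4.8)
\emph{Step 1: Hadamard normal form and the value at $t=0$.} The plan is to pass to vector bundle trivializing charts near $\ESingMan$, where $\Hhom(\dif,t)$ acquires an expression with no division, so that the limit at $t=0$ and the identification with $\tfib{\dif}$ become essentially formal and the real work is left to the embedding property. Since $\ESingMan$ is compact, fix a finite trivializing atlas as in~\S\ref{sect:vb:triv_atlas}, and in a chart $\ETotMan|_{U}\cong U\times\bR^{\erdim}$ write $\dif(\pb,\pv)=(\dif_1(\pb,\pv),\dif_2(\pb,\pv))$. From $\dif(\ESingMan)\subset\ESingMan$ we get $\dif_2(\pb,0)\equiv 0$, so by the Hadamard lemma (Lemma~\ref{lm:Hadamard}) $\dif_2(\pb,\pv)=G(\pb,\pv)\pv$ for some $\Cinfty$ matrix-valued $G$ depending $\Wr{\infty}$-continuously on $\dif$ with $G(\pb,0)=\partial_{\pv}\dif_2(\pb,0)=:A(\pb)$. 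Substituting into~\eqref{equ:map_H} the denominator $\phi$ cancels, and, writing $\phi=\phi(\dif,t,(\pb,\pv))\in[t;1]$, one obtains on $\Rman_{\adelta(\dif)}$ (for $\adelta(\dif)$ small)
\[
  \Hhom(\dif,t)(\pb,\pv)=\bigl(\dif_1(\pb,\phi\pv),\ G(\pb,\phi\pv)\pv\bigr),
\]
while $\Hhom(\dif,t)=\dif$ off $\Rman_{\bConst\adelta(\dif)}$. The right-hand side carries no division, hence is jointly $\Cinfty$ in $(\dif,t,\pb,\pv)$ down to $t=0$; together with~\ref{enum:HH:H_cont} this yields the $\Wr{\infty}$-continuous extension of $\Hhom$ to $t=0$, and since $\mu$ vanishes on $[0;\aConst]$ we get $\Hhom(\dif,0)(\pb,\pv)=(\dif_1(\pb,0),A(\pb)\pv)$, which is exactly $\tfib{\dif}$ of~\eqref{equ:Tfibh}. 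Finally, because $\BNbh$ and $\ETotMan$ have the same dimension, each $\dif\in\EmbNE$ is an open embedding; hence $\dif|_{\ESingMan}$ is an embedding and the differential of $\dif$ at $(\pb,0)$, being block upper triangular with diagonal blocks $\partial_{\pb}\dif_1(\pb,0)$ and $A(\pb)$, forces every $A(\pb)$ invertible, so $\tfib{\dif}$ is a vector bundle isomorphism onto its image. In particular~\ref{enum:mainth:H_0} will follow once $\Hhom(\dif,0)\in\EmbNE$ is established.

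\emph{Step 2: the embedding property.} Fix $\dif$. For $\adelta(\dif)$ small enough, $\Rman_{\adelta(\dif)}\subset\BNbh$, and conjugating $\Hhom(\dif,t)$ by the fibrewise rescaling $\px\mapsto\adelta(\dif)\px$ turns it, on the unit ball bundle $\{\nrm{\pu}\le 1\}$, into $(\pb,\pu)\mapsto\bigl(\dif_1(\pb,\psi\adelta(\dif)\pu),\ G(\pb,\psi\adelta(\dif)\pu)\pu\bigr)$, where $\psi=t+(1-t)\mu(\nrm{\pu})\in[0;1]$ does \emph{not} depend on $\adelta(\dif)$. As $\adelta(\dif)\to 0^{+}$ this converges, in the $\Cinfty$ topology on the compact manifold with boundary $\{\nrm{\pu}\le 1\}$ and uniformly in $t\in[0;1]$, to $(\pb,\pu)\mapsto(\dif_1(\pb,0),A(\pb)\pu)$, i.e.\ to $\tfib{\dif}$ read in these coordinates, which by Step~1 is an embedding with everywhere invertible differential. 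Since the embeddings of a compact manifold with boundary into a manifold of the same dimension form a $\Wr{1}$-open set, for $\adelta(\dif)$ below some $\adelta^{*}(\dif)>0$ the restriction $\Hhom(\dif,t)|_{\Rman_{\adelta(\dif)}}$ is an embedding for \emph{all} $t\in[0;1]$. It coincides with $\dif$ on the collar $\bConst\adelta(\dif)<\nrm{\pv}\le\adelta(\dif)$ of $\partial\Rman_{\adelta(\dif)}$, so by the standard fact that two such embeddings agreeing near the boundary have equal images, $\Hhom(\dif,t)(\Rman_{\adelta(\dif)})=\dif(\Rman_{\adelta(\dif)})$. Now $\Hhom(\dif,t)$ is an immersion everywhere (it is so on $\Rman_{\adelta(\dif)}$ and equals the embedding $\dif$ off the compact $\Rman_{\bConst\adelta(\dif)}$), it is injective on $\Rman_{\adelta(\dif)}$ and on $\BNbh\setminus\Rman_{\bConst\adelta(\dif)}$ separately, and a collision $\Hhom(\dif,t)(\px)=\Hhom(\dif,t)(\py)$ with $\px\in\Rman_{\adelta(\dif)}$, $\py\notin\Rman_{\adelta(\dif)}$ is impossible since then $\Hhom(\dif,t)(\px)\in\dif(\Rman_{\adelta(\dif)})$ whereas $\dif(\py)=\Hhom(\dif,t)(\py)\notin\dif(\Rman_{\adelta(\dif)})$; hence $\Hhom(\dif,t)\in\EmbNE$. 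Shrinking $\adelta^{*}(\dif)$ further so that also $\dif(\Rman_{\adelta(\dif)})\subset\BNbh$ — possible since $\dif(\ESingMan)\subset\ESingMan$ lies in the interior of $\BNbh$ and $\dif$ is continuous — the equality of images gives~\ref{enum:mainth:H_Rd}.

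\emph{Step 3: making $\adelta$ continuous.} The conditions used above ($\Rman_{\adelta(\dif)}\subset\BNbh$; $\Hhom(\dif,\cdot)$ an embedding on $\Rman_{\adelta(\dif)}$ for all $t$; $\dif(\Rman_{\adelta(\dif)})\subset\BNbh$) single out an interval $\bigl(0;\adelta^{*}(\dif)\bigr)$, and the quantities that bound it — the distances from $\ESingMan$ and from $\dif(\ESingMan)$ to $\partial\BNbh$, the $\Cinfty$-size of $\dif$ near $\ESingMan$, the radius of a $\Wr{1}$-neighbourhood of $\tfib{\dif}$ inside the embeddings — depend lower semicontinuously on $\dif$ in the $\Wr{\infty}$-topology (here one uses that all the estimates in Step~2 are locally uniform in $\dif$), so $\adelta^{*}$ is strictly positive and lower semicontinuous. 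On the metrizable space $\EmbNE$ such a function admits a strictly positive continuous minorant by a standard partition-of-unity argument, and any such minorant $\adelta$ has all the required properties; the $\Wr{\infty}$-continuity of the resulting $\Hhom:\EmbNE\times[0;1]\to\EmbNE$ then follows from the formula of Step~1 together with the $\Wr{\infty}$-continuity of $\dif\mapsto G$ and $\dif\mapsto\adelta(\dif)$, which extends~\ref{enum:HH:H_cont} across $t=0$.

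\emph{Main obstacle.} The delicate point is Step~2: showing $\Hhom(\dif,t)$ is an embedding of \emph{all} of $\BNbh$, uniformly in the intermediate parameter $t$ — in particular excluding collisions between the inner region $\Rman_{\bConst\adelta(\dif)}$, where $\Hhom(\dif,t)$ is only a small perturbation of $\tfib{\dif}$, and the untouched exterior. The rescaling by $\adelta(\dif)$ reduces this to $\Wr{1}$-openness of the embeddings of one fixed compact ball bundle, and the ``equal images'' principle removes any need for quantitative bi-Lipschitz bookkeeping; the cost is that $\adelta$ must be taken small in a way controlled by $\dif$, which is precisely what the lower-semicontinuity/minorant argument of Step~3 supplies. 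By contrast, everything concerning $t=0$ — including the identification $\Hhom(\dif,0)=\tfib{\dif}$ on $\Rman_{\aConst\adelta(\dif)}$ — is immediate once the Hadamard cancellation in~\eqref{equ:map_H} has been noticed.
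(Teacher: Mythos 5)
Your proof is correct, but it takes a genuinely different route from the paper's, so let me compare the two.

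\textbf{What the paper does.} The paper establishes the embedding property via quantitative estimates on the $(P,Q,R,S)$-blocks of the Jacobi matrix of the deformed map (Lemma~\ref{lm:estimates_on_PRS}), packaged into Corollary~\ref{cor:HH_trunc_1_jet_cont} ($\Wr{3}$-continuity of the ``truncated $1$-jet'' $(P,R,S)$ despite the full $\HH$ being $\Wr{1}$-discontinuous, cf.\ Lemma~\ref{lm:general_linearization}\ref{enum:HH_cont:not_cont}), then Corollaries~\ref{cor:approx_PS}--\ref{cor:approx_emb} (rank preservation and local injectivity via a sequential compactness argument), and finally Theorem~\ref{th:pres_max_rank}, case~\ref{eqnum:fin:Embplusab}, where collisions between the inner region and the untouched exterior are excluded by choosing disjoint open sets $\Vman\supset\dif(\ESingMan)$ and $\Wman\supset\dif(\overline{\BNbh\setminus\Rman_{\eps''}})$ and invoking $\Wr{0,0}$-continuity of $\HH$.

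\textbf{What you do differently.} Your Step~2 replaces all of that machinery with a single geometric trick: conjugate $\Hhom(\dif,t)|_{\Rman_{\adelta}}$ by the fibrewise rescaling $L_{\adelta}(\px)=\adelta\px$. The conjugate $L_{\adelta}^{-1}\circ\Hhom(\dif,t)\circ L_{\adelta}$ is defined on the \emph{fixed} unit ball bundle, and — crucially, since the rescaling kills the extra factor of $\adelta$ in the chain-rule terms — it converges in $\Cinfty$, uniformly in $t\in[0;1]$, to the coordinate form $(\dif_1(\pb,0),A(\pb)\pu)$ of $\tfib\dif$ as $\adelta\to0^{+}$. This is exactly the continuity that the unrescaled $\HH$ does \emph{not} enjoy; the rescaling cancels the discontinuous $Q$-block. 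After that, $\Wr{1}$-openness of embeddings of a compact ball bundle gives the embedding property on $\Rman_{\adelta}$ for small $\adelta$, and your ``equal images'' observation (two codimension-$0$ embeddings of a compact manifold-with-boundary that agree near the boundary have the same image, because $f(\Int K)$ and $g(\Int K)$ are whole components of $M\setminus f(\partial K)$ sharing a nonempty open set) rules out inner/outer collisions without any auxiliary choice of neighborhoods $\Vman,\Wman$. The final deduction that an injective immersion of $\BNbh$ into $\ETotMan$ with image $\dif(\BNbh)$ is automatically an embedding (equi-dimensional, image an embedded submanifold) closes the argument.

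\textbf{Assessment.} Both approaches are correct. Yours is more geometric and more streamlined for Theorem~\ref{th:linearization_of_embeddings} specifically; it isolates the rescaling as the key idea and dispenses with the explicit seminorm estimates. The paper pays the cost of those estimates in order to prove the strictly more general Theorem~\ref{th:pres_max_rank}, which also handles the rank-$a$/rank-$b$ families $\CPNFx{a}$, $\CSNFx{b}$, $\CPSNFx{a}{b}$ (needed for the foliated and submersion-type applications later in the paper); your argument is specialized to the embedding case and would need adaptation to cover those.

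\textbf{Two small points to tighten.} (i) In Step~3 the lower semicontinuity of $\adelta^{*}$ in $\Wr{\infty}$ is asserted but not proved; it does hold, because the $\Cinfty$-rate of the rescaled convergence is controlled by finitely many derivatives of $\dif$ (and hence is locally uniform in $\dif$ with respect to any $\Wr{r}$ for large enough $r$), but this deserves a sentence. The minorant-by-partition-of-unity step implicitly uses metrizability/paracompactness of $\EmbNE$ in the $\Wr{r}$-topology, i.e.\ compactness of $\BNbh$; the paper's Theorem~\ref{th:pres_max_rank} makes the same hypothesis, so this is consistent, but state it. (ii) The ``equal images'' fact should be formulated with care: it relies on $f(\Int K)$ being open \emph{and} closed in $M\setminus f(\partial K)$ (openness because $\dim K=\dim M$, closedness by compactness of $K$), so that $f(\Int K)$ and $g(\Int K)$ are connected components of $M\setminus f(\partial K)$ and can then be compared via the shared collar. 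As written it reads as a slogan; a one-line justification would make the step airtight.
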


In fact we will prove a more general statement about ``linearization'' of smooth maps between maps of pairs of manifolds $(\Mman,\ESingMan) \to (\Nman,\FSingMan)$, Theorem~\ref{th:pres_max_rank}.

\subsection{Structure of the paper}
In Section~\ref{sect:preliminaries} we recall necessary definitions and introduce notation which will be used throughout the paper.
In particular, we discuss Whitney topologies and principal fibrations.
In Section~\ref{sect:vector_bundles} we consider several constructions related with vector bundles.
Section~\ref{sect:main_result} we will deduce from Theorem~\ref{th:linearization_of_embeddings} the first application to homotopy type of certain diffeomorphism groups in a non-foliated case.
Let $\DiffInvMS$ be the group of diffeomorphisms of a manifold $\Mman$ which leave invariant a compact submanifold $\ESingMan$, $p:\ETotMan\to\ESingMan$ a regular neighborhood of $\ESingMan$, and $\DiffInvLMS$ the subgroup of $\DiffInvMS$ consisting of diffeomorphisms which coincide near $\ESingMan$ with some vector bundle morphisms of $\ETotMan$.
We will prove in Theorem~\ref{th:linearization} that the inclusion $\DiffInvLMS\subset\DiffInvMS$ is a homotopy equivalence, and also establish Lemmas~\ref{lm:linth:applications} and~\ref{lm:sectAutE} allowing to reduce the study of the homotopy type of $\DiffInvMS$ to several simpler groups, see~\S\ref{sect:conclusion}.

In Section~\ref{sect:linearization_foliations}, for a special class of singular foliations having certain \myemph{homogeneity} property, we deduce from linearization Theorem~\ref{th:linearization} a foliated version (Theorem~\ref{th:linearization:fol}).
We also consider particular cases corresponding to foliations on vector bundles (Theorem~\ref{th:qDiffLPInvLFolS}, and especially Corollary~\ref{cor:qDiffLPInvLFolS_triv} for trivial vector bundles), and for foliations by level sets of functions having only isolated ``homogeneous'' critical points (Theorem~\ref{th:linearization_funcs}).

In Section~\ref{sect:linearization_vb} we establish several preliminary variants of linearization theorem.
They are based of the Hadamard lemma and its modifications.
First we consider maps between total spaces of vector bundles $p:\ETotMan\to\ESingMan$ and $q:\FTotMan\to\FSingMan$ sending zero sections to zero sections.
Every such map $\dif:\ETotMan\to\FTotMan$ induces a certain vector bundle morphism $\tfib{\dif}:\ETotMan\to\FTotMan$ which can be regarded as a ``derivative of $\dif$ along $\ESingMan$ in the direction of fibres'', see \S\ref{sect:vector_bundles}.
We prove that $\dif$ is homotopic to $\tfib{\dif}$ (Lemma~\ref{lm:HadamardLemma_v_bundles1}).
Next, we modify the that lemma to prove that $\dif$ is homotopic to a map which coincides with $\tfib{\dif}$ near $\ESingMan$, and the corresponding homotopy $\{\dif_t\}_{t\in[0;1]}$ is supported in an arbitrary small neighborhood of $\ESingMan$ (Corollary~\ref{cor:hom_eq_case_CNF}).
In order to apply the latter corollary to diffeomorphisms we will also give estimations on ranks along $\ESingMan$ of the tangent maps of $\dif_t$, see~\S\ref{sect:estimation_lin_hom} and Theorem~\ref{th:pres_max_rank}.
Finally in \S\ref{sect:proof:th:pres_max_rank} we prove Theorem~\ref{th:pres_max_rank} and in~\S\ref{sect:proof:th:linearization_method} deduce from it Theorem~\ref{th:linearization}.

\section{Preliminaries}\label{sect:preliminaries}
Throughout the paper by a \myemph{manifold} we will mean a $\Cinfty$ manifold which may be non-compact and have a boundary.

\subsection{Whitney topologies}\label{sect:whitney_topologies}
Let $A,B$ be two manifolds.
Then for each $\mmm\in\{0,1,\ldots,\infty\}$ and $\rr\geq\mmm$ the space $\Crm{\mmm}{A}{B}$ admits two topologies, \myemph{weak} $\Wr{\rr}$ and \myemph{strong} $\Sr{\rr}$, satisfying the following relations: $\Wr{\rr}\subset\Wr{s}$ and $\Sr{\rr}\subset\Sr{s}$ for $\rr<s$, $\Wr{\infty} = \mathop{\cup}\limits_{0\leq \rr<\infty}\Wr{\rr}$, $\Sr{\infty} = \mathop{\cup}\limits_{0\leq \rr<\infty}\Sr{\rr}$, $\Wr{\rr}\subset\Sr{\rr}$, and they coincide if $A$ is compact, e.g.~\cite{Hirsch:DiffTop, MargalefOuterelo:DT:1992}.

Let $C,D,E,F$ be some other manifolds, $\XX\subset\Crm{\mmm}{A}{B}\times C$ and $\YY\subset\Crm{\mmm}{D}{E}\times F$ two subsets, and $r,s\in\{0,1,\ldots,\infty\}$.
Endow $\Crm{\mmm}{A}{B}$ with the topology $\Wr{r}$ and $\Crm{\mmm}{D}{E}$ with the topology $\Wr{s}$.
Then a map $\XX\to\YY$ (resp.\ $\XX\to F$, $E \to \YY$) continuous with respect to the induced topologies on $\XX$ and $\YY$ will be called \myemph{$\Wr{r,s}$-continuous} (resp.\ \myemph{$\Wr{r}$-, $\Wr{s}$-continuous}).
Similarly, a map $f:\XX\to\YY$ is a \myemph{$\Wr{r,s}$-homotopy equivalence}, if it is a homotopy equivalence when $\XX$ and $\YY$ are endowed with the topologies $\Wr{r}$ and $\Wr{s}$ respectively.
The following lemma is a direct consequence of the relation $\Wr{\infty} = \mathop{\cup}\limits_{0\leq \rr<\infty}\Wr{\rr}$:
\begin{sublemma}\label{lm:infty_continuous}
Suppose that for each $s<\infty$ there exists $r<\infty$ such that $f:\XX\to\YY$ is $\Wr{r,s}$-continuous.
Then $f$ is $\Wr{\infty,\infty}$-continuous.
\qed
\end{sublemma}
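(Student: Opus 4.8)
The plan is to unwind the definitions of the topologies involved and exploit the fact that $\Wr{\infty}$ is the \emph{directed union} of the topologies $\Wr{r}$, $r<\infty$. First I would recall that, viewed as collections of open sets, $\Wr{r}\subset\Wr{s}$ whenever $r<s$, and $\Wr{\infty}=\bigcup_{0\le r<\infty}\Wr{r}$. These same relations persist after taking products with the fixed manifolds $C$ and $F$ and then passing to the subspace topologies on $\XX$ and $\YY$: both operations are monotone in the topology and commute with directed unions, the latter because the family $\{\Wr{r}\}_{r<\infty}$ is linearly ordered by inclusion, so its union is already closed under finite intersections and arbitrary unions (hence is a topology, not merely a subbasis). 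The upshot of this first step is the key observation: a subset of $\XX$ (resp.\ of $\YY$) is $\Wr{\infty}$-open if and only if it is $\Wr{r}$-open for some finite $r$.

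Next I would carry out the continuity check. Let $V\subset\YY$ be an arbitrary $\Wr{\infty}$-open set. By the observation above, $V$ is $\Wr{s}$-open for some $s<\infty$. Applying the hypothesis to this $s$, there exists $r<\infty$ such that $f:\XX\to\YY$ is $\Wr{r,s}$-continuous; consequently $f^{-1}(V)$ is $\Wr{r}$-open in $\XX$, and therefore $\Wr{\infty}$-open, again by the observation. Since $V$ was an arbitrary $\Wr{\infty}$-open subset of $\YY$, the map $f$ is continuous for the $\Wr{\infty}$-topologies on both sides, i.e.\ $\Wr{\infty,\infty}$-continuous.

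There is no real obstacle here; the statement is genuinely elementary. The only point requiring a modicum of care is the passage from ``$\Wr{\infty}$ is generated by $\bigcup_r\Wr{r}$'' to ``$\Wr{\infty}$ equals $\bigcup_r\Wr{r}$'', which is exactly where linear orderedness of the chain $\{\Wr{r}\}_{r<\infty}$ is used, and the analogous remark for the induced product and subspace topologies on $\XX$ and $\YY$. Everything else is a routine diagram chase through the definitions of $\Wr{r,s}$-continuity given in \S\ref{sect:whitney_topologies}.
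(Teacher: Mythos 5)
Your argument is on the right track and matches the paper's intended (unwritten) proof, but the step you single out as the ``key observation'' is too strong, and for the target space $\YY$ it is false in general. Passing to the product with the manifold $F$ does \emph{not} commute with taking the directed union of the topologies $\Wr{s}$: for example with $F=\bR$, take $U_n$ to be a $\Wr{n}$-basic neighborhood that is not $\Wr{n-1}$-open; then $W=\bigcup_{n\geq 1}U_n\times(n,n+1)$ is open in $\Wr{\infty}\times\mathrm{std}$ but not in $\Wr{s}\times\mathrm{std}$ for any finite $s$, since near a point of $U_n\times(n,n+1)$ with $n>s$ no $\Wr{s}$-box fits inside $W$. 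Relatedly, a chain of topologies being linearly ordered does not by itself make its union closed under arbitrary unions, so that part of your justification is also off; the identity $\Wr{\infty}=\bigcup_r\Wr{r}$ on $\Crm{\mmm}{A}{B}$ is the paper's assertion, not a formal consequence of linear order.

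The repair is cheap and does not change the structure of your argument. One does not need every $\Wr{\infty}$-open subset of $\YY$ to be $\Wr{s}$-open for a single finite $s$; it suffices that the sets $(U'\times V)\cap\YY$, with $U'$ $\Wr{s}$-open for some finite $s$ and $V$ open in $F$, form a \emph{basis} of $\Wr{\infty}|_{\YY}$ --- and they do, because the chain condition does make $\bigcup_s\Wr{s}$ closed under finite intersections. Continuity of $f$ into $\YY$ only requires preimages of basic open sets to be open, and for such a set the rest of your argument goes through verbatim: by hypothesis there is an $r<\infty$ with $f$ being $\Wr{r,s}$-continuous, so $f^{-1}\bigl((U'\times V)\cap\YY\bigr)$ is $\Wr{r}$-open in $\XX$, hence $\Wr{\infty}$-open by the easy inclusion $\Wr{r}\subset\Wr{\infty}$, which does pass to products and subspaces. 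For the domain $\XX$ this easy inclusion is the only direction you use, so no issue arises there.
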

One can define $\Sr{r,s}$- and $\Sr{r}$-continuity corresponding to strong topologies and establish analogue of Lemma~\ref{lm:infty_continuous} in a similar way.
The following lemma is also trivial:
\begin{sublemma}\label{lm:eval_map}
The \myemph{evaluation} map $\ev:\Crm{\mmm}{A}{B}\times A \to B$, $\ev(\dif,a)=\dif(a)$, is $\Wr{0}$-continuous, therefore $\Wr{\rr}$- and $\Sr{\rr}$-continuous for all $\rr\geq0$.
\qed
\end{sublemma}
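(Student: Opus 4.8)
The plan is to first establish $\Wr{0}$-continuity of $\ev$ directly from the definition of the weak $C^{0}$ topology, using only that a manifold is locally compact, and then to obtain $\Wr{\rr}$- and $\Sr{\rr}$-continuity for $\rr\geq0$ as a purely formal consequence of the inclusions of topologies recalled in~\S\ref{sect:whitney_topologies}. I would start by recalling that a basic $\Wr{0}$-neighbourhood of a map $\dif_0\in\Crm{\mmm}{A}{B}$ is a finite intersection of sets of the form
\[
\mathcal{N}(\dif_0;\phi,\psi,K,\eps)
= \bigl\{\,\dif\in\Crm{\mmm}{A}{B} \ \big|\ \dif(K)\subset V,\ \textstyle\sup_{a\in K}\nrm{\psi\dif(a)-\psi\dif_0(a)}<\eps\,\bigr\},
\]
where $(\phi,U)$ is a chart on $A$, $(\psi,V)$ a chart on $B$, $K\subset U$ is compact with $\dif_0(K)\subset V$, and $\eps>0$.

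Next I would check continuity of $\ev$ at an arbitrary point $(\dif_0,a_0)\in\Crm{\mmm}{A}{B}\times A$. Fix a chart $(\psi,V)$ on $B$ with $\psi(\dif_0(a_0))=0$; then the sets $W_\eps:=\psi^{-1}(\{\,\nrm{\cdot}<\eps\,\})$, for small $\eps>0$, form a neighbourhood base of $\dif_0(a_0)$ in $B$, so it is enough to send some neighbourhood of $(\dif_0,a_0)$ into each $W_\eps$. Since $\dif_0$ is continuous, $\dif_0^{-1}(W_{\eps/2})$ is an open neighbourhood of $a_0$, and since $A$ is locally compact one may choose a chart $(\phi,U)$ around $a_0$ together with a compact neighbourhood $K$ of $a_0$ satisfying $K\subset U\cap\dif_0^{-1}(W_{\eps/2})$. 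Then $\mathcal{N}(\dif_0;\phi,\psi,K,\eps/2)\times\Int(K)$ is an open neighbourhood of $(\dif_0,a_0)$ in the product, and for every $(\dif,a)$ in it one has
\[
\nrm{\psi\dif(a)}\leq\nrm{\psi\dif(a)-\psi\dif_0(a)}+\nrm{\psi\dif_0(a)}<\tfrac{\eps}{2}+\tfrac{\eps}{2}=\eps,
\]
hence $\ev(\dif,a)=\dif(a)\in W_\eps$. This proves $\Wr{0}$-continuity of $\ev$.

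Finally, the remaining assertions follow formally: by the inclusions $\Wr{0}\subset\Wr{\rr}\subset\Sr{\rr}$ for $\rr\geq0$ recalled in~\S\ref{sect:whitney_topologies}, the identity maps $(\Crm{\mmm}{A}{B},\Wr{\rr})\to(\Crm{\mmm}{A}{B},\Wr{0})$ and $(\Crm{\mmm}{A}{B},\Sr{\rr})\to(\Crm{\mmm}{A}{B},\Wr{0})$ are continuous, so post-composing the $\Wr{0}$-continuous map $\ev$ with them (and with $\id_A$ on the $A$-factor) yields its $\Wr{\rr}$- and $\Sr{\rr}$-continuity. I do not expect any genuine obstacle here; the only steps needing care are invoking the correct local description of the weak $C^{0}$ topology and using the local compactness of $A$, which is precisely the property of finite-dimensional manifolds that makes the evaluation map continuous.
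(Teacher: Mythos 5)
Your proof is correct: it is the standard argument that evaluation is continuous on $\Crm{\mmm}{A}{B}\times A$ when the function space carries the compact-open ($\Wr{0}$) topology and $A$ is locally compact, followed by the formal observation that $\Wr{0}\subset\Wr{\rr}\subset\Sr{\rr}$. The paper offers no proof at all (the lemma is declared trivial and closed with \qed), and your argument is precisely the one the authors implicitly rely on, so there is nothing to compare beyond noting that you have supplied the omitted details correctly.
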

Let $\ESingMan$ be a manifold, and $\Uman\subset\ESingMan\times\bR^{\erdim}$ an open subset.
For every compact subset $\BComp\subset\Uman$, $\frdim\geq1$, and finite $\rr\geq0$ define respectively an \myemph{$(\rr,\BComp)$-seminorm along $\bR^{\erdim}$} and an \myemph{$(\rr,\BComp)$-norm along $\bR^{\erdim}$}
\[ |\cdot|_{\rr,\erdim,\BComp}, \ \ \|\cdot\|_{\rr,\erdim,\BComp} \, :\Ci{\Uman}{\bR^{\frdim}} \to [0;+\infty)\]
by the following formulas: if $\dif=(\dif_1,\ldots,\dif_{\frdim}):\Uman\to\bR^{\frdim}$ is a $\Cinfty$ map, then
\begin{align*}
    |\dif|_{\rr,\erdim,\BComp}
        &= \sum_{\substack{\alpha=(\alpha_1,\ldots,\alpha_{\erdim}), \\ |\alpha|=l}}
          \ \sum_{i=1}^{\frdim} \
          \sup_{\pv\in\BComp} \left| \frac{\partial^{|\alpha|}\dif_i}{\partial\pvi{1}^{\alpha_1}\cdots \partial\pvi{\erdim}^{\alpha_{\erdim}}}(\px,\pv)  \right|,
    &
    \|\dif\|_{\rr,\erdim,\BComp} = \sum_{l=0}^{\rr} |\dif|_{\rr,\erdim,\BComp},
\end{align*}
where $\pv=(\pvi{1},\ldots,\pvi{\erdim})$ coordinates in $\bR^{\erdim}$, $\px\in\ESingMan$, and $|\alpha|=\alpha_1+\cdots+\alpha_{\erdim}$.

Thus an $(\rr,\BComp)$-seminorm along $\bR^{\erdim}$ (resp.\ an $(\rr,\BComp)$-norm along $\bR^{\erdim}$) give bounds on partial derivatives in $\bR^{\erdim}$ of coordinate functions of $\dif$ of order $\rr$ (resp.\ of all orders up to $\rr$) on $\BComp$.

In particular, if $\ESingMan$ is a point, so $\Uman$ is actually a subset of $\bR^{\erdim}$, then $|\dif|_{\rr,\erdim,\BComp}$ and $\|\dif\|_{\rr,\erdim,\BComp}$ will be called the \myemph{$(\rr,\BComp)$-seminorm} (resp.\ the \myemph{$(\rr,\BComp)$-norm}) of $\dif$.
In this case, if $\BComp$ is itself a smooth submanifold of $\Uman$, then $\|\cdot\|_{\rr,\BComp}$ is a metric on $\Crm{\mmm}{\BComp}{\bR^{\frdim}}$ which generates $\Wr{\rr}$ topology.
In particular, for $\dif\in\Crm{\mmm}{\Uman}{\bR^{\erdim}}$, $\rr\in\{0,1,\ldots,\infty\}$, $\eps>0$, and a compact subset $\BComp\subset\Uman$, the following set
\begin{equation}\label{equ:Nbh}
    \Nbh{\dif}{\rr}{\BComp}{\eps} = \bigl\{ \wh{\dif}\in\Crm{\mmm}{\Uman}{\bR^{\erdim}} \mid \| \wh{\dif} - \dif \|_{\rr,\BComp} < \eps \bigr\}.
\end{equation}
is an $\Wr{\rr}$-open neighborhood of $\dif$ in $\Crm{\mmm}{\Uman}{\bR^{\erdim}}$.

\subsection{Homomorphisms as principal fibrations}
We recall here a simple lemma which will be used several times.
For a topological group $G$ denote by $\unit{G}$ its unit element, by $G_0$ the path component of $\unit{G}$ in $G$, and by $\pi_0 G$ the set of path components of $G$.
Then $G_0$ is a normal subgroup of $G$ and we have a natural bijection $\pi_0 G = G / G_0$ which allows to endow $\pi_0 G$ with a groups structure.

Let $\phi:G \to H$ be a continuous homomorphism of topological groups.
Say that $\phi$ \myemph{admits a local section} if there exists an open neighborhood $U\subset H$ of $e_{H}$ and a continuous map $s:U \to G$ being a section of $\phi$, so $\phi\circ s = \id_{U}$.

\begin{sublemma}\label{lm:principal_fibrations}
Let $\phi:G \to H$ be a continuous homomorphism of topological groups with kernel $K$.
Denote $K'=K\cap G_0$ and assume that $H$ is paracompact and Hausdorff.
If $\phi$ admits a local section $s:U \to G$, then
\begin{enumerate}
\item\label{enum:pf:1}
$\phi: G \to q(G)$ is a locally trivial principal $K$-fibration over the image $q(G)$;
\item\label{enum:pf:2}
$q(G)$ is a union of path components of $H$;
\item\label{enum:pf:3}
the restriction $\phi:G_{0}\to H_{0}$ is surjective and is a principal $K'$-fibration;
\item\label{enum:pf:4}
we have a long exact sequence of homotopy groups
\begin{align*}
\cdots \to \pi_k(K_0, \unit{G}) \to \pi_k(G_0, \unit{G}) &\to \pi_k(H_0, \unit{H}) \to \pi_{k-1}(K_0, \unit{G})  \to \cdots \\
\cdots
&\to \pi_1(H_0, \unit{H}) \to K/K_0 \to G/G_0 \to G/K;
\end{align*}
\item\label{enum:pf:5}
if $\phi$ admits a global section $s:\phi(G) \to G$, then $G$ is \myemph{homeomorphic} with $K\times \phi(G)$;
\item\label{enum:pf:6}
if $H$ is a subgroup of $G$ and $\phi$ is a retraction onto $H$, i.e.\ $\phi(h)=h$ for all $h\in H$, then the inclusion $H\subset G$ is a global section of $\phi$, so $G$ is \myemph{homeomorphic} with $K\times H$.
\end{enumerate}
\end{sublemma}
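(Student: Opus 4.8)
The plan is to reduce everything to the classical fact that a continuous homomorphism of topological groups that admits a local section near $e_H$ automatically admits one near every point of its image, via translation, and hence is a principal $K$-bundle over the (open and closed) subgroup $\phi(G)\subset H$. Concretely, for \ref{enum:pf:1} I would first observe that $K=\ker\phi$ acts on $G$ by right translations, freely and with orbits exactly the fibers of $\phi$, so $\phi:G\to\phi(G)$ is a $K$-``bundle'' in the set-theoretic sense and we only need local triviality. Given the local section $s:U\to G$ with $\phi\circ s=\id_U$, for any $g\in G$ put $h=\phi(g)$ and define $s_h:hU\to G$ by $s_h(y)=g\,s(h^{-1}y)$; then $\phi(s_h(y))=h\,\phi(s(h^{-1}y))=h\,h^{-1}y=y$, so $s_h$ is a section over the open set $hU\subset\phi(G)$, and the trivialization $hU\times K\to\phi^{-1}(hU)$, $(y,k)\mapsto s_h(y)\,k$, is a homeomorphism with inverse $g'\mapsto(\phi(g'),\,s_h(\phi(g'))^{-1}g')$ (continuous because $\phi$, $s$, group multiplication and inversion all are). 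This proves \ref{enum:pf:1}. For \ref{enum:pf:2}: $\phi(G)$ is an open subgroup of $H$ — open because it contains the open set $U$ (as $\phi(s(U))=U$) together with all its translates $hU$ — and an open subgroup of a topological group is also closed (its complement is a union of cosets, each open), hence $\phi(G)$ is a union of path components of $H$; here Hausdorffness is used only to make ``path component'' behave well.

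For \ref{enum:pf:3}, restrict the bundle of \ref{enum:pf:1} over $H_0\subset\phi(G)$ (the inclusion holds by \ref{enum:pf:2}). The total space of this restriction is $\phi^{-1}(H_0)$; I claim its identity component is exactly $G_0$ and that $\phi^{-1}(H_0)$ is path-connected when... — more carefully: $\phi(G_0)$ is a path-connected subgroup of $H$ containing a neighborhood of $e_H$ (namely $\phi(s(U)\cap G_0)$, using that $s$ can be taken with $s(e_H)=e_H$ after multiplying by a constant, so $s(U)$ meets $G_0$ in a neighborhood of $e_H$ there), hence is open, hence equals $H_0$; so $\phi:G_0\to H_0$ is onto. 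Its kernel is $K\cap G_0=K'$, and restricting the local trivializations of \ref{enum:pf:1} and intersecting with $G_0$ gives local trivializations with fiber $K'$ — the point being that over a small connected trivializing neighborhood $V\ni e_H$ one has $\phi^{-1}(V)\cap G_0\cong V\times K'$ because the $K'$-coset of $G_0$ inside $K$ is the one hitting the connected chart through $e_G$. Thus $\phi:G_0\to H_0$ is a principal $K'$-bundle. Then \ref{enum:pf:4} is just the long exact homotopy sequence of this fibration $K'\hookrightarrow G_0\to H_0$ (a Serre, indeed locally trivial, fibration over the paracompact Hausdorff base $H_0$), with the low-degree terms rewritten using $\pi_0$: $\pi_1(H_0)\to\pi_0(K')=K'/K'_0=K/K_0$ (since $K'_0=K_0$, as the identity component of $K$ already lies in $G_0$), $\pi_0(G_0)$ is trivial, and the sequence continues $\pi_0(K')\to\pi_0(G)\to\pi_0(\phi(G))$, i.e.\ $K/K_0\to G/G_0\to G/K$, using $\phi(G)=G/K$ with the quotient topology — which agrees with the subspace topology from $H$ precisely because $\phi$ is an open map onto $\phi(G)$, an immediate consequence of the local triviality in \ref{enum:pf:1}.

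Parts \ref{enum:pf:5} and \ref{enum:pf:6} are formal. For \ref{enum:pf:5}, a global section $s:\phi(G)\to G$ gives a global trivialization $\phi(G)\times K\to G$, $(y,k)\mapsto s(y)k$, a homeomorphism by the same inverse formula $g\mapsto(\phi(g),s(\phi(g))^{-1}g)$ as above. For \ref{enum:pf:6}, if $H\le G$ and $\phi|_H=\id_H$, then in particular $\phi(H)=H$ so $\phi(G)\supseteq H$; conversely $\phi(G)\subseteq H$ since $\phi$ maps into $H$; hence $\phi(G)=H$ and the inclusion $H\hookrightarrow G$ is a (continuous, global) section of $\phi:G\to\phi(G)=H$, so \ref{enum:pf:5} applies and $G\cong K\times H$.

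The main obstacle I anticipate is not any single step but the careful bookkeeping in \ref{enum:pf:3}: identifying $K'_0$ with $K_0$, checking that the restricted trivializations genuinely have fiber $K'$ rather than all of $K$, and verifying surjectivity $\phi(G_0)=H_0$ — all of which hinge on choosing the local section with $s(e_H)=e_H$ and on the ``open subgroup is closed'' trick. Paracompactness of $H$ enters only to guarantee that the locally trivial bundle of \ref{enum:pf:1} (restricted to $H_0$) is a genuine fibration for the long exact sequence; Hausdorffness is used to keep path components and the quotient topology well-behaved. None of this requires the Whitney-topology machinery: the lemma is purely about topological groups, and in applications $G,H$ will be the diffeomorphism/embedding spaces with their $\Wtop^\infty$ topologies, which are paracompact Hausdorff.
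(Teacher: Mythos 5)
Your parts \ref{enum:pf:1}, \ref{enum:pf:2}, \ref{enum:pf:5}, \ref{enum:pf:6} are fine and take a slightly different route than the paper: for \ref{enum:pf:1} you build the trivializations $y\mapsto g\,s(\phi(g)^{-1}y)k$ by hand, whereas the paper factors $\phi$ through $G\to G/K$ and uses the section to show $G/K\to\phi(G)$ is a homeomorphism; for \ref{enum:pf:2} you use ``an open subgroup is closed'' while the paper uses the path-lifting property of a locally trivial fibration over a paracompact Hausdorff base. Both work.

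However, your argument for \ref{enum:pf:3} has a genuine gap. You want to show $\phi(G_0)=H_0$ by arguing that $\phi(G_0)$ is an open path-connected subgroup of $H$; the candidate neighborhood of $e_H$ is $\phi(s(U)\cap G_0)$ after normalizing $s(e_H)=e_G$. But $s(U)\cap G_0 = s(s^{-1}(G_0))$, and $s^{-1}(G_0)$ is a neighborhood of $e_H$ only if $G_0$ is \emph{open} in $G$, which is nowhere assumed (e.g.\ $G=\mathbb{R}\times\mathbb{Q}\to H=\mathbb{Q}$, where $H_0=\{0\}$ is not open). The paper's route avoids this entirely: since $\phi:G\to\phi(G)$ is a locally trivial fibration over a paracompact Hausdorff base, it has the path-lifting property, which immediately gives $H_0\subset\phi(G_0)$ (and the reverse inclusion is clear). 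Your subsequent claim that one can pick a \emph{connected} trivializing neighborhood $V\ni e_H$ with $\phi^{-1}(V)\cap G_0\cong V\times K'$ is a second soft spot, since $H_0$ need not be locally (path-)connected; the correct reduction is again through path lifting applied to the restricted bundle.

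For \ref{enum:pf:4} the fibration you chose is the wrong one. The long exact sequence of $K'\hookrightarrow G_0\to H_0$ terminates at $\pi_0(K')\to\pi_0(G_0)\to\pi_0(H_0)$, i.e.\ $K'/K_0\to *\to *$; it cannot produce $K/K_0\to G/G_0\to G/K$. Your bridging identity $K'/K'_0=K/K_0$ is false in general: with $G=\mathbb{R}\times\mathbb{Z}/2$, $H=\mathbb{R}$, $\phi$ the projection, one has $K=\{0\}\times\mathbb{Z}/2$, $K'=K_0=\{e\}$, so $K'/K'_0$ is trivial while $K/K_0\cong\mathbb{Z}/2$. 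And the ``continuation $\pi_0(K')\to\pi_0(G)\to\pi_0(\phi(G))$'' is not part of the exact sequence of $K'\to G_0\to H_0$ at all; you are splicing in terms from the \emph{other} fibration. The paper instead takes the long exact sequence of the full fibration $K\to G\to\phi(G)$ and then substitutes $\pi_k(G)\cong\pi_k(G_0)$, $\pi_k(K)\cong\pi_k(K_0)$, $\pi_k(\phi(G))\cong\pi_k(H_0)$ for $k\geq1$, keeping $\pi_0K=K/K_0$, $\pi_0G=G/G_0$ and the identification $\phi(G)\cong G/K$ for the tail; this is what produces exactly the displayed sequence.
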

\begin{proof}
\ref{enum:pf:1}
Note that the natural action of $K$ on $G$ by left shifts turns the canonical projection $p:G\to G/K$ to a principal $K$-fibration.
We also have a continuous bijection $\hat{\phi}:G/K \to q(G)$ such that $\phi = \hat{\phi} \circ p$.
Then the existence of a section $s: U \to G$ implies that
\begin{itemize}
\item $\hat{\phi}$ is a homeomorphism, so the map $\phi:G\to q(G)$ is a principal $K$-fibration;
\item that fibration $\phi:G\to q(G)$ is locally trivial.
\end{itemize}

\ref{enum:pf:2} \& \ref{enum:pf:3}
Recall that every locally trivial fibration over a paracompact Hausdorff space satisfies path lifting property, e.g.~\cite[Chapter 2, \S7, Corollary~1.2]{Spanier:AT:1981}.
This implies that $\phi(G_0)=H_0$ and the image of $\phi$ must consist of path components of $H$.
Hence $\phi:G_{0}\to H_{0}$ is a principal $(K\cap G_0)$-fibration;

\ref{enum:pf:4}
That sequence is the long exact sequence of homotopy groups of the locally trivial fibration $\phi:G\to\phi(G)$, where we take to account that $\pi_k(G,\unit{G}) \cong \pi_k(G_0,\unit{G})$ for $k\geq1$, $\pi_0 G = G/G_0$, and $G/K = \phi(G) = (G/G_0)/(K/K_0)=\pi_0G / \pi_0 K$.

\ref{enum:pf:5} \& \ref{enum:pf:6}
If $\phi$ admits a global section, then it is a trivial principal $K$-fibration.
\end{proof}

\section{Vector bundles}\label{sect:vector_bundles}
Let $\ESingMan$ be a connected manifold of dimension $\exdim$ and $\Epr: \ETotMan \to \ESingMan$ a smooth vector bundle over $\ESingMan$ of rank $\erdim$, so $\dim(\ETotMan) = \exdim+\erdim$.
We will identify $\ESingMan$ with the image of the zero section $\zeta:\ESingMan\to\ETotMan$, and thus $\Epr$ will be a smooth retraction of $\ETotMan$ onto $\ESingMan$.

For each $\px\in\ESingMan$ let $\Efib{\px} := \Epr^{-1}(\px)$ be the corresponding fiber over $\px$.
By definition $\Efib{\px}$ is endowed with a structure of an $\erdim$-dimensional vector space.
In particular, let $\zoom{}:\bR\times\ETotMan\to\ETotMan$ be the ``multiplication by scalars'' map.
To simplify notations put $t\px := \zoom{}(t,\px)$.
Then $t(s\px)=(ts)\px$, $\Epr(t\px)=\Epr(\px)$, $0\px= \Epr(\px)$, $1\px=\px$ for all $s,t\in\bR$ and $\px\in\ETotMan$.
In particular, if $\px\in\ESingMan$, so $\px=\Epr(\px)$, then $t\px = t \Epr(\px)=t \cdot (0 \px) = (t\cdot 0) \px = 0\px=\px$ for all $t\in\bR$.

A subset $\BNbh \subset\ETotMan$ will be called \myemph{\STL}, whenever $t\px\in\BNbh$ for each $t\in[0;1]$ and $\px\in\BNbh$.

\subsection{Trivialized atlas}\label{sect:vb:triv_atlas}
A \myemph{trivialized local chart of $p$} is an open embedding $\BChart:\BOset\times\bR^{\erdim} \to \ETotMan$ making commutative the following diagram:
\[
\xymatrix@C=7ex@R=3ex{
\ \BOset\times\bR^{\erdim} \  \ar@{^(->}[r]^-{\BChart} \ar[d] &  \  \ETotMan  \ \ar[d]^-{p} \\
\ \BOset                   \  \ar@{^(->}[r]^-{\BChart|_{\BOset}} &  \  \ESingMan \
}
\]
and being linear on fibres (in particular $\BChart$ is a vector bundle morphism from a trivial vector bundle), where $\BOset\subset\bR^{\exdim}$ is an open subset.

A \myemph{trivialized atlas} of $p$ is a collection of trivialized local charts
\begin{equation}\label{equ:triv_atlas}
    \xi=\{ \BChart_i: \BOset_i \times \bR^{\erdim} \to \ETotMan\}_{i\in\Lambda}
\end{equation}
such that $\ETotMan= \cup_{i\in\Lambda}\BChart_i(\BOset_i\times\bR^{\erdim})$.
Notice that if $\BOset_i\cap\BOset_j\not=\varnothing$, then the map
\[
\BChart_j^{-1} \circ \BChart_i: (\BOset_i\cap\BOset_j) \times \bR^{\erdim} \to (\BOset_i\cap\BOset_j) \times \bR^{\erdim}
\]
is given by $\BChart_j^{-1} \circ \BChart_i(\px,\pv) = (\px, A_{ij}(\px) \pv)$, where $\px\in\BOset\cap\Vman$, $\pv\in\bR^{\erdim}$, and $A_{ij}:\BOset_i\cap\BOset_j \to \GL(\bR^{\erdim})$ are $\Cinfty$ maps called \myemph{transition functions} and satisfying the standard \myemph{cocycle} relations.

Such an atlas $\xi$ will be called an \myemph{orthogonal structure on $\ETotMan$} whenever each $A_{ij}$ takes values in the orthogonal group $O(\erdim)$.
It is well known that every vector bundle admits an orthogonal structure, e.g.~\cite[Chapter~3, \S9]{Husemoller:FB:1994}.

If $\xi$ is an orthogonal structure on $\ETotMan$, then one can define a norm $\|\cdot\|: \ETotMan \to[0;+\infty)$ as follows.
Let $\pw\in\ETotMan$, and $\BChart_i: \BOset_i \times \bR^{\erdim} \to \ETotMan$ be a trivialized local chart of $p$ from $\xi$ such that $\pw\in\BChart_i(\BOset_i \times \bR^{\erdim})$, so $\pw = \BChart_i(\px,\pv)$, where $\px\in\BOset_i$, and $\pv=(\pvi{1},\ldots,\pvi{\erdim})\in\bR^{\erdim}$.
Put $\|\pw\| := \sqrt{\pv_1^2+\cdots+\pv_{\erdim}^2}$.
Since all transition functions take values in the orthogonal group, $\|\pw\|$ does not depend on a particular choice of a trivialized local chart $\BChart_i$ whose image contains $\px$.

\subsection{Vertical subbundle}\label{sect:vb:vert_subbundle}
Denote by $\pi:\tang{\ETotMan}\to\ETotMan$ the tangent bundle of $\ETotMan$.
Let $\Uman$ be a subset of $\ETotMan$, and $i:\Uman\subset\ETotMan$ be the corresponding inclusion map.
Then we will use the following notations:
\begin{align*}
    \tang[\Uman]{\ETotMan} &:=
        \, \mathop{\cup}\limits_{\px\in\Uman} \tang[\px]{\ETotMan}  \,
        \equiv
        \, i^{*}(\tang{\ETotMan}), \\
    \vvect{\Uman} &:=
        \, \mathop{\cup}\limits_{\px\in\Uman} \tang[\px]{\Efib{\Epr(\px)}} \,
        \equiv
        \, \ker(\tang{\Epr}:\tang[\Uman]{\ETotMan}\to\tang{\ESingMan}).
\end{align*}
Thus $\tang[\Uman]{\ETotMan}$ is union of all tangent spaces of $\ETotMan$ at points of $\Uman$, which can also be regarded as the pull back of $\pi$ by the inclusion $i:\Uman\subset\ETotMan$.
Also $\vvect{\Uman}$ is the subset of $\tang[\Uman]{\ETotMan}$ consisting of tangent vectors to fibres of $\Epr$ at points of $\Uman$, which can also be described as the kernel of the restriction to $\tang[\Uman]{\ETotMan}$ of the tangent map to the projection $\ETotMan\supset\Uman\xrightarrow{\Epr}\ESingMan$.

Consider the case when $\Uman=\ESingMan$.
Then we have an inclusion $\tang{\zeta}:\tang{\ESingMan} \subset \tang[\ESingMan]{\ETotMan}$ induced by the zero section $\zeta:\ESingMan\to\ETotMan$, and also the projection $\tang{\Epr}:\tang[\ESingMan]{\ETotMan} \to \tang{\ESingMan}$, which gives a canonical direct sum splitting:
\begin{equation}\label{equ:TBE_VB_TB}
    \tang[\ESingMan]{\ETotMan} \cong \tang{\ESingMan}\oplus \vvect{\ESingMan}.
\end{equation}
The following statement is well known and straightforward:
\begin{sublemma}
There is a canonical vector bundle isomorphism
\begin{equation}\label{equ:E_VB}
    \psi:\ETotMan \to \vvect{\ESingMan}
\end{equation}
defined as follows.
Let $\px\in \ETotMan$, $b = p(\px)\in\ESingMan$, and $\gamma_{\px}:\bR\to \Efib{b} \subset \vvect{\ESingMan}$ be the curve defined by $\gamma_{\px}(t) = t\px$, so $\tfrac{d}{dt}\gamma_{\px}(t)=\px$ for all $t\in\bR$.
Then $\psi(\px):=\tfrac{d}{dt}\gamma_{\px}(1)\in \tang[\px]\Efib{\px}$ is the tangent vector to $\gamma$ at $\px$.
\qed
\end{sublemma}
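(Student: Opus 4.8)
The plan is to check, in turn, that $\psi$ is well-defined, that it is fibre-preserving over $\id_{\ESingMan}$, that it is linear and bijective on each fibre, and that it (together with its inverse) is $\Cinfty$; since a $\Cinfty$ fibre-preserving map over $\id_{\ESingMan}$ which is a linear isomorphism on every fibre is automatically a vector bundle isomorphism (the fibrewise inverses assemble to a $\Cinfty$ map, as matrix inversion is $\Cinfty$ on $\GL$), this will suffice. For well-definedness, fix $\px\in\ETotMan$ and put $b=\Epr(\px)$. From $\Epr(t\px)=\Epr(\px)=b$ for all $t\in\bR$ we see that the curve $\gamma_{\px}(t)=t\px$ stays inside the single fibre $\Efib{b}=\Epr^{-1}(b)$, which is an $\erdim$-dimensional vector space. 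Under the canonical trivialization $\tang{\Efib{b}}\cong\Efib{b}\times\Efib{b}$ of the tangent bundle of a vector space, the velocity field of the linear curve $\gamma_{\px}$ is constant and equal to $\px$; taking its value at the instant $t=0$, when $\gamma_{\px}$ passes through $0\px=b$, gives a well-defined vector in $\tang[b]{\Efib{b}}$, i.e.\ in the fibre of $\vvect{\ESingMan}$ over $b$. Hence $\psi$ maps $\Efib{b}$ into $\vvect{\ESingMan}\big|_{b}$, so it is fibre-preserving over $\id_{\ESingMan}$; and on each fibre $\Efib{b}$ the map $\psi$ is precisely the standard isomorphism $V\to\tang[0]{V}$, $v\mapsto\tfrac{d}{dt}\big|_{0}(tv)$, of a vector space with its tangent space at the origin, which is linear and bijective with inverse the usual identification $\tang[0]{V}\cong V$.

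It remains to verify smoothness, which I would do in a trivialized local chart $\BChart_i\colon\BOset_i\times\bR^{\erdim}\to\ETotMan$ of $\Epr$. Being linear on fibres, $\BChart_i$ intertwines $\zoom{}$ with the standard scalar multiplication on $\bR^{\erdim}$, so in this chart $\gamma_{\BChart_i(\px,\pv)}(t)=\BChart_i(\px,t\pv)$, and the induced local expression of $\psi$ sends $(\px,\pv)$ to the vertical tangent vector based at $(\px,0)$ with fibre component $\pv$ — a patently $\Cinfty$ map, linear in $\pv$, with an obvious $\Cinfty$ inverse. On an overlap $\BOset_i\cap\BOset_j$ the transition is obtained by differentiating the $\Cinfty$ cocycle $\{A_{ij}\}$ in the fibre direction at $\pv=0$, which merely reproduces $A_{ij}$ itself; so the local formulas are compatible and glue to a global $\Cinfty$ morphism of vector bundles $\psi\colon\ETotMan\to\vvect{\ESingMan}$, which by the previous paragraph is a fibrewise linear isomorphism, hence a vector bundle isomorphism. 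Finally, the construction used only the scalar multiplication $\zoom{}$, and no trivialized atlas or orthogonal structure, so $\psi$ is canonical (and, one checks in the same way, natural with respect to vector bundle morphisms).

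I expect the only genuinely delicate point to be the base-point bookkeeping hidden in the notation $\tfrac{d}{dt}\gamma_{\px}(1)$: literally this velocity vector is based at the moving point $\gamma_{\px}(1)=\px$, whereas an element of $\vvect{\ESingMan}$ must be based at $b\in\ESingMan$. The resolution is the canonical trivialization of $\tang{\Efib{b}}$ used above, under which the velocity of the linear curve $\gamma_{\px}$ does not depend on the parameter and equals $\px$; evaluating it at $t=0$ is what places it in $\tang[b]{\Efib{b}}$, while writing $t=1$ merely records that $\psi(\px)$ ``is'' $\px$ under the trivialization. Once this identification is made precise, everything else reduces to the routine chart computation above.
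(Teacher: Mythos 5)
The paper gives no proof here --- the sublemma is labelled ``well known and straightforward'' and ends with an immediate proof marker --- so there is nothing to compare against. Your argument is correct and is the standard one: $\gamma_{\px}$ stays in the fibre $\Efib{b}$ since $\Epr(t\px)=\Epr(\px)$; on $\Efib{b}$ the map $\psi$ is precisely the canonical linear isomorphism of a vector space with its tangent space at the origin; and the chart computation shows $\psi$ is $\Cinfty$ with $\Cinfty$ inverse and that the local expressions glue via the same cocycle $\{A_{ij}\}$. You also correctly identify and repair the one genuine wrinkle in the statement's phrasing: $\tfrac{d}{dt}\gamma_{\px}(1)$ is literally based at $\px$, while elements of $\vvect{\ESingMan}$ must be based over the zero section, so the derivative should be taken at $t=0$ (equivalently, one invokes the canonical trivialization $\tang{\Efib{b}}\cong\Efib{b}\times\Efib{b}$, under which the velocity field of a linear curve is constant).
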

Hence we get from~\eqref{equ:TBE_VB_TB} another canonical direct sum splitting:
\begin{equation}\label{equ:TBE_E_TB}
    \tang[\ESingMan]{\ETotMan} \cong \tang{\ESingMan} \oplus \ETotMan,
\end{equation}
which will play an important role in this paper.

\subsection{Tangent map along fibers}\label{sect:tangent_map_along_fibers}
Now let $\Fpr:\FTotMan\to\FSingMan$ be another vector bundle, $\BNbh \subset \ETotMan$ be a neighborhood of $\ESingMan$, and $\dif:\BNbh\to\FTotMan$ be a $\Cr{1}$ mapping such that $\dif(\ESingMan) \subset \FSingMan$.
Then $\tang{\dif}\bigl(\tang[\ESingMan]{\ETotMan}\bigr) \subset \tang[\FSingMan]{\FTotMan}$.
Hence $\dif$ induces the following vector bundle morphism:
\begin{equation}\label{equ:Tfibh}
    \tfib{\dif}: \ETotMan                       \stackrel{\eqref{equ:E_VB}}{\cong}
    \vvect{\ESingMan}                 \subset
    \tang[\ESingMan]{\ETotMan}      \xrightarrow{\tang{\dif}}
    \tang[\FSingMan]{\FTotMan}    \stackrel{\eqref{equ:TBE_VB_TB}}{\cong}
    \tang{\FSingMan} \oplus \vvect{\FSingMan} \xrightarrow{\mathrm{pr}_{2}}
    \vvect{\FSingMan}                \stackrel{\eqref{equ:E_VB}}{\cong}
    \FTotMan.
\end{equation}
We will call $\tfib{\dif}:\ETotMan\to\FTotMan$ the tangent map of $\dif$ along $\ESingMan$ in the direction of fibers.
Evidently, for each $\pv\in\ETotMan$ we have that
\begin{equation}\label{equ:Tfibh:formula}
    \tfib{\dif}(\pv) = \lim_{t\to0}\tfrac{1}{t}\dif(t\pv).
\end{equation}

\begin{subexample}\rm
Let $\dif:\bR^{\erdim}\to\bR^{\frdim}$ be a $\Cr{1}$ map such that $\dif(0)=0$.
One can regard it as a map between total spaces of trivial vector bundles over a point.
Then $\tfib{\dif}:\bR^{\erdim} \to\bR^{\frdim}$ is just the tangent map of $\dif$ at $0$.
\end{subexample}

\begin{subexample}\rm
More generally, let $\Epr:\bR^{\exdim}\times\bR^{\erdim}\to\bR^{\exdim}$ and $\Fpr:\bR^{\fxdim}\times\bR^{\frdim}\to\bR^{\fxdim}$ be trivial vector bundles, and $\dif=(\aafunc,\bbfunc):\bR^{\exdim}\times\bR^{\erdim} \to \bR^{\fxdim}\times\bR^{\frdim}$ a $\Cr{1}$ map such that $\dif(\bR^{\exdim}\times 0) \subset \bR^{\fxdim}\times 0$.
Let also $\pv=(\pvi{1},\ldots,\pvi{\erdim})$ be coordinates in $\bR^{\erdim}$, $\bbfunc=(\bbfunc_{1},\ldots,\bbfunc_{\frdim}):\bR^{\exdim}\times\bR^{\erdim} \to \bR^{\frdim}$ the coordinate functions of $\bbfunc$, and $S(\px,\pv) = \bigl( \ddd{\bbfunc_i}{\pvi{j}}(\px,\pv) \bigr)_{i=1,\ldots,\frdim, \ j=1,\ldots,\erdim}$ the matrix of partial derivatives of $\gfunc$ in $\pv$.
Then $\tfib{\dif}(\px,\pv) = (\func(\px,0), S(\px,\pv) \pv)$.
\end{subexample}
The main observation exploited in the present paper is that every smooth map of pairs $\dif:(\ETotMan,\ESingMan)\to(\FTotMan,\FSingMan)$ is homotopic to $\tfib{\dif}$, see Lemma~\ref{lm:HadamardLemma_v_bundles1}.
We will modify such an homotopy to make it supported in an arbitrary small neighborhood of $\ESingMan$, and thus to prove that $\dif$ is homotopy to a map $\hat{\dif}$ which coincides with $\dif$ near $\ESingMan$.
Moreover, one can make $\hat{\dif}$ to preserve some other properties of $\dif$, e.g.~being a diffeomorphism, see Theorem~\ref{th:pres_max_rank}.

\section{Applications of linearization theorem. Non-foliated case}\label{sect:main_result}
\subsection{Linearization of diffeomorphisms preserving a submanifold}
Let $\Mman$ be a manifold and $\ESingMan \subset\Mman$ a proper%
\footnote{Recall that $\ESingMan$ is a \myemph{proper} submanifold of $\Mman$, if $\partial\ESingMan=\partial\Mman\cap\ESingMan$ and this intersection is transversal.}
submanifold whose connected components may have distinct dimensions.
Let also $\Epr:\ETotMan\to\ESingMan$ be a regular neighborhood of $\ESingMan$ in $\Mman$, i.e.\ a $\Cinfty$ retraction admitting a vector bundle structure, where $\ETotMan$ is some open neighborhood of $\ESingMan$ in $\Mman$.
Define the following groups:
\begin{itemize}
\item
$\DiffInvMS$ is the group of $\Cinfty$ diffeomorphisms $\dif$ of $\Mman$ such that $\dif(\ESingMan)=\ESingMan$;
\item
$\DiffInvLMS$ is the subgroup of $\DiffInvMS$ consisting of diffeomorphisms $\dif$ of $\Mman$ for which there exists a vector bundle isomorphism $\hat{\dif}:\ETotMan\to\ETotMan$ and an open neighborhood $\Uman_{\dif}$ of $\ESingMan$ in $\ETotMan$ such that $\dif|_{\Uman_{\dif}} = \hat{\dif}|_{\Uman_{\dif}}$;
\item
$\DiffFixMS$ is a subgroup of $\DiffInvMS$ consisting of diffeomorphisms fixed on $\ESingMan$;
\item
$\DiffFixLMS := \DiffFixMS \cap \DiffInvLMS$;
\item
$\DiffNbMS$ is the group of diffeomorphisms of $\Mman$ fixed near $\ESingMan$, i.e.\ having support in $\Mman\setminus\ESingMan$;
\item
$\AutE$ is the group of vector bundle automorphisms $\gdif$ of $\ETotMan$ such that $\gdif(\ETotMan)=\ETotMan$;
\item
$\AutES$ is the subgroup of $\AutE$ consisting of vector bundle morphisms fixed on $\ESingMan$ or, equivalently, leaving invariant the intersection of each fiber of $\Epr$ with $\ETotMan$.
\end{itemize}

The following theorem is a direct consequence of the linearization Theorem~\ref{th:linearization_of_embeddings}.
Notice that usually one is able to compute only the weak homotopy type of infinite dimensional spaces especially with respect to strong $\Sr{\infty}$ Whitney topologies, e.g.~\cite{Ando:S:1978, Berlanga:CMB:2006, BanakhMineSakaiYagasaki:TP:2011, BanakhYagasaki:TA:2015, KhokhliukMaksymenko:PIGC:2020}.

\begin{subtheorem}\label{th:linearization}
Let $\ESingMan$ be a compact proper submanifold of a manifold $\Mman$ and $\Epr:\ETotMan\to\ESingMan$ be a regular neighborhood of $\ESingMan$ in $\Mman$.
Then the inclusion of pairs
\begin{equation}\label{equ:incl_Diff}
    \bigl(\DiffFixLMS, \DiffInvLMS\bigr) \subset \bigl( \DiffFixMS, \DiffInvMS \bigr)
\end{equation}
is mutually a $\Wr{\infty,\infty}$- and $\Sr{\infty,\infty}$-homotopy equivalence.
\end{subtheorem}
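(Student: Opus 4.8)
The plan is to turn the deformation $\Hhom$ of Theorem~\ref{th:linearization_of_embeddings} into a deformation $\Psi:\DiffInvMS\times[0;1]\to\DiffInvMS$, \emph{supported inside one fixed compact tube around $\ESingMan$}, with $\Psi_1=\id$, $\Psi_0(\DiffInvMS)\subset\DiffInvLMS$, and such that $\Psi$ leaves invariant each of $\DiffInvLMS$, $\DiffFixMS$ and $\DiffFixLMS$; then $\Psi_0$ will serve as a homotopy inverse of the inclusion~\eqref{equ:incl_Diff}, as a map of pairs. First I would fix an orthogonal structure on $\Epr:\ETotMan\to\ESingMan$, with norm $\nrm{\cdot}$ and closed tubes $\Rman_{\eps}$, choose $\eps_0>0$ with $\Rman_{\eps_0}\subset\ETotMan$, and take $\BNbh:=\{\px\in\ETotMan\mid\nrm{\px}<\eps_0\}$; Theorem~\ref{th:linearization_of_embeddings} then provides a strictly positive $\Wr{\infty}$-continuous $\adelta:\EmbNE\to(0;+\infty)$, which we may assume $<\eps_0$, together with $\Hhom:\EmbNE\times[0;1]\to\EmbNE$. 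Since a general $\dif\in\DiffInvMS$ need not map $\BNbh$ into $\ETotMan$, the next step is to \emph{rescale}: using $\dif(\ESingMan)=\ESingMan$, compactness of $\ESingMan$, and openness of the $\Wr{0}$-condition ``$\dif(\Rman_{r\eps_0})\subset\ETotMan$'', fix a function $r:\DiffInvMS\to(0;1]$, continuous for both the weak and the strong topology, with $\dif(\Rman_{r(\dif)\eps_0})\subset\ETotMan$ for all $\dif$. Writing $\zoom{c}(\px)=c\px$ for fibrewise multiplication by the scalar $c$ — a vector bundle endomorphism of $\ETotMan$ over $\id_{\ESingMan}$, invertible where defined for $c>0$ — put $\bar\dif:=\dif\circ\zoom{r(\dif)}|_{\BNbh}:\BNbh\to\ETotMan$, which is an embedding with $\bar\dif(\ESingMan)=\ESingMan$; thus $\bar\dif\in\EmbNE$.

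Next I would define $\Psi(\dif,t)=:\tilde\dif_t:\Mman\to\Mman$ by $\tilde\dif_t(\px)=\Hhom(\bar\dif,t)\bigl(\zoom{1/r(\dif)}(\px)\bigr)$ for $\nrm{\px}<r(\dif)\,\eps_0$, and $\tilde\dif_t(\px)=\dif(\px)$ for $\px\notin\Rman_{r(\dif)\,\bConst\adelta(\bar\dif)}$. The two formulas agree on the overlap because, by property~\ref{enum:HH:H_support}, $\Hhom(\bar\dif,t)=\bar\dif$ outside $\Rman_{\bConst\adelta(\bar\dif)}$ for every $t$ and $\bar\dif=\dif\circ\zoom{r(\dif)}$; the two sets cover $\Mman$ since $\bConst<1$; and the whole modification lives inside the single compact tube $\Rman_{\bConst\eps_0}\subset\ETotMan$, uniformly in $\dif$ and $t$. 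I would then check: (i) each $\tilde\dif_t$ is a diffeomorphism of $\Mman$, since $g_t:=\tilde\dif_t\circ\dif^{-1}$ is a proper local diffeomorphism equal to $\id_{\Mman}$ off the compact set $\dif\bigl(\Rman_{r(\dif)\bConst\adelta(\bar\dif)}\bigr)\subset\ETotMan$, hence a finite covering of $\Mman$ whose sheet number is constant in $t$ and equals that of $g_1=\id$, namely $1$, so $g_t$ is a diffeomorphism and $\tilde\dif_t=g_t\circ\dif$ is one too — and $\tilde\dif_t(\ESingMan)=\ESingMan$ because $\Hhom(\bar\dif,t)|_{\ESingMan}=\bar\dif|_{\ESingMan}$ and scalar multiplication fixes the zero section; (ii) $\Psi_1=\id$, since $\Hhom(\bar\dif,1)=\bar\dif$ by~\ref{enum:HH:H1_id}, so $\tilde\dif_1=\bar\dif\circ\zoom{1/r(\dif)}=\dif$ near $\ESingMan$ and $=\dif$ elsewhere; (iii) $\Psi_0(\DiffInvMS)\subset\DiffInvLMS$, because by~\ref{enum:mainth:H_0}, $\Hhom(\bar\dif,0)=\tfib{\bar\dif}$ on $\Rman_{\aConst\adelta(\bar\dif)}$, the chain rule for the fibrewise tangent map (\S\ref{sect:tangent_map_along_fibers}) gives $\tfib{\bar\dif}=\tfib{(\dif\circ\zoom{r(\dif)})}=\tfib{\dif}\circ\zoom{r(\dif)}$, hence $\tilde\dif_0=\tfib{\bar\dif}\circ\zoom{1/r(\dif)}=\tfib{\dif}$ on the neighborhood $\Rman_{r(\dif)\aConst\adelta(\bar\dif)}$ of $\ESingMan$, and $\tfib{\dif}$ is a vector bundle \emph{isomorphism} of $\ETotMan$ — it is the isomorphism induced by $\tang{\dif}$ on $\tang[\ESingMan]{\ETotMan}/\tang{\ESingMan}\cong\ETotMan$, using $\dif(\ESingMan)=\ESingMan$ and that $\dif$ is a diffeomorphism; (iv) $\Psi$ preserves $\DiffInvLMS$, $\DiffFixMS$ and $\DiffFixLMS$, for if $\dif=\hat\dif$ near $\ESingMan$ for a vector bundle isomorphism $\hat\dif$, then by fibrewise linearity $\Hhom(\bar\dif,t)=\hat\dif\circ\zoom{r(\dif)}$ near $\ESingMan$, so $\tilde\dif_t=\hat\dif$ there, and if $\dif|_{\ESingMan}=\id$ then $\tilde\dif_t|_{\ESingMan}=\id$ as in~(i).

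Granting the continuity of $\Psi$ (dealt with below), the conclusion is formal. The map $\Psi$ is a homotopy of pairs $(\DiffFixMS,\DiffInvMS)\to(\DiffFixMS,\DiffInvMS)$ from $\id$ to $\iota\circ\Psi_0$, where $\iota$ denotes the inclusion~\eqref{equ:incl_Diff} and $\Psi_0$ maps $(\DiffFixMS,\DiffInvMS)$ into $(\DiffFixLMS,\DiffInvLMS)$ by~(iii) and~(iv); and its restriction to $\DiffInvLMS\times[0;1]$, which stays in $\DiffInvLMS$ and carries $\DiffFixLMS\times[0;1]$ into $\DiffFixLMS$ by~(iv), is a homotopy of pairs $(\DiffFixLMS,\DiffInvLMS)\to(\DiffFixLMS,\DiffInvLMS)$ from $\id$ to $\Psi_0\circ\iota$. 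Hence $\iota$ and $\Psi_0$ are mutually inverse homotopy equivalences of pairs.

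The only real work is the continuity of $\Psi:\DiffInvMS\times[0;1]\to\DiffInvMS$ for the weak $\Wr{\infty}$ \emph{and} the strong $\Sr{\infty}$ topology. The key simplification is that every $\tilde\dif_t$ coincides with $\dif$ off the \emph{fixed} compact set $\Rman_{\bConst\eps_0}$, on which the weak and strong topologies agree; thus it suffices to prove $\Wr{\infty}$-continuity, and both assertions of the theorem follow together. By Lemma~\ref{lm:infty_continuous} this reduces to $\Wr{r,s}$-continuity for finite $r,s$, which is routine bookkeeping combining: $\Wr{\infty,\infty}$-continuity of $\Hhom$ (property~\ref{enum:HH:H_cont}); continuity of $r$ and of $\adelta$; continuity of restriction and of composition of smooth maps in the weak topology; smoothness of $\zoom{c}(\px)=c\px$ jointly in $(c,\px)$; $\Wr{0}$-continuity of the evaluation map (Lemma~\ref{lm:eval_map}); and the harmlessness of the gluing, since the two defining formulas agree on a neighborhood of the continuously varying, nested gluing spheres $\{\nrm{\px}=r(\dif)\bConst\adelta(\bar\dif)\}$. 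I expect this verification, together with the check in~(i) that the glued map is a diffeomorphism and the choice of the rescaling function $r$, to be the main — though essentially routine — technical points; the general pair-linearization Theorem~\ref{th:pres_max_rank} announced in the introduction is not needed here.
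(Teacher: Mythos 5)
Your proof is correct and follows the same basic strategy as the paper's: take the linearizing homotopy $\Hhom$ from Theorem~\ref{th:linearization_of_embeddings}, patch it with $\dif$ away from a small tube around $\ESingMan$ to obtain a deformation retraction of the pair $(\DiffFixMS,\DiffInvMS)$ onto $(\DiffFixLMS,\DiffInvLMS)$, and deduce $\Sr{\infty,\infty}$-continuity from $\Wr{\infty,\infty}$-continuity plus the fact that the deformation is supported inside a fixed compact tube. The genuine addition in your write-up is the rescaling $\bar\dif=\dif\circ\zoom{r(\dif)}$: you introduce it because a general $\dif\in\DiffInvMS$ need not carry the fixed compact tube $\BNbh$ into $\ETotMan$, so that $\dif|_{\BNbh}$ need not lie in $\EmbNE$, the actual domain of $\Hhom$. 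The paper's formula~\eqref{equ:G_hom} feeds $\dif$ directly to $\Hhom$, so your rescaling makes literal a step the paper treats implicitly (equivalently, one could build the constraint $\dif(\Rman_{\adelta(\dif)})\subset\ETotMan$ into the choice of $\adelta$); the conjugation by $\zoom{r(\dif)}$ is a clean way to do it, and your verifications (ii)--(iv), including the chain rule $\tfib{(\dif\circ\zoom{c})}=\tfib{\dif}\circ\zoom{c}$ and the fibrewise-linearity argument for invariance of $\DiffInvLMS$, are correct. Two small remarks. First, the proper-covering/degree argument in (i) should be run component-by-component or, better, on the product $[0;1]\times\Mman$, so that it covers a disconnected $\Mman$. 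Second, your reduction of $\Sr{\infty}$-continuity to $\Wr{\infty}$-continuity is conceptually exactly right, but it still requires the brief bookkeeping with the locally finite family $\{\Kman_i\}$ defining a strong basic neighborhood: only finitely many $\Kman_i$ meet $\Rman_{\bConst\eps_0}$, and on the remaining ones the condition on $\Psi(\dif,t)$ is a condition on $\dif$ alone; this is precisely the computation the paper writes out in part~2) of its proof.
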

\begin{proof}
Let $\Hhom:\EmbNE \times[0;1]\to\EmbNE$ be the same as in Theorem~\ref{th:linearization_of_embeddings}.
Define another map $\Ghom:\DiffInvMS\times[0;1]\to\DiffInvMS$ by
\begin{equation}\label{equ:G_hom}
\Ghom(\dif,t)(\px) =
\begin{cases}
\Hhom(\dif,t)(\px), & \px\in\ETotMan, \\
\dif(\px),          & \px\in\Mman\setminus\ETotMan.
\end{cases}
\end{equation}
It follows from~\ref{enum:HH:H_support} that $\Ghom(\dif,t)$ is a $\Cinfty$ diffeomorphism of $\Mman$, and thus $\Ghom$ is well-defined.
Moreover, it also follows from $\Wr{\infty,\infty}$-continuity of $\Hhom$, see~\ref{enum:HH:H_is_def}, that $\Ghom$ is also $\Wr{\infty,\infty}$-continuous.

We claim that the map \myemph{$\Ghom$ is a deformation of the pair $\bigl(\DiffFixMS,\DiffInvMS\bigr)$ into the pair $\bigl(\DiffFixLMS,\DiffInvLMS\bigr)$}, i.e.\ it is a homotopy between $\id_{\DiffInvMS}$ and the map whose image is contained in $\DiffInvLMS$ and such that $\DiffFixMS$ and $\DiffInvLMS$, and therefore $\DiffFixLMS = \DiffFixMS\cap\DiffInvLMS$, are invariant under $\Ghom$.
In particular, the inclusion~\eqref{equ:incl_Diff} is a homotopy equivalence.
This follows from~\ref{eunm:H_he:1}-\ref{eunm:H_he:4} below.
\begin{enumerate}[wide, label={(\roman*)}]
\item\label{eunm:H_he:1}
Indeed, by~\ref{enum:HH:H1_id}, $\Ghom_1(\dif)(\px)=\dif(\px)$ for all $\dif\in\DiffInvMS$ and $\px\in\Mman$.
In other words, $\Ghom_1=\id_{\DiffInvMS}$.

\item\label{eunm:H_he:2}
Also, by Theorem~\ref{th:linearization_of_embeddings}, for each $\dif\in\DiffInvMS$ the map $\Ghom_0(\dif)$ coincides near $\ESingMan$ with the vector bundle morphism $\tfib{\dif}$.
In particular, $\Ghom_0(\DiffInvMS) \subset \DiffInvLMS$.

\item\label{eunm:H_he:3}
If $\dif\in\DiffInvLMS$, i.e.\ it coincides with some vector bundle morphism $\hat{\dif}:\ETotMan\to\ETotMan$ on some neighborhood $\Wman$ of $\ESingMan$, then by formula~\eqref{equ:map_H} for $\Ghom$ we have that
\[
    \Ghom_t(\dif)(\px)
        = \frac{\dif(\phi(\dif,t,\px)\px)}{\phi(\dif,t,\px)}
        = \frac{\hat{\dif}(\phi(\dif,t,\px)\px)}{\phi(\dif,t,\px)}
        = \frac{\phi(\dif,t,\px)\hat{\dif}(\px)}{\phi(\dif,t,\px)}
        = \hat{\dif}(\px)
        = \dif(\px)
\]
for $\px\in\Wman$ and $t>0$.
In other words, $\Ghom(\DiffInvLMS\times(0;1]) \subset \DiffInvLMS$.
Also, as just shown in~\ref{eunm:H_he:4}, $\Ghom(\DiffInvLMS\times 0) \subset \DiffInvLMS$ as well.

\item\label{eunm:H_he:4}
Finally, by~\ref{enum:HH:H_support}, $\Ghom_t(\dif)(\px)=\dif(\px)$ for all $(\dif,t,\px)\in\DiffInvMS\times[0;1]\times\ESingMan$.
In particular, if $\dif\in\DiffFixMS$, i.e.\ $\dif$ is fixed on $\ESingMan$, then so is each $\Ghom_t(\dif)$.
In other words, $\Ghom(\DiffFixMS\times[0;1]) \subset \DiffFixMS$.
\end{enumerate}

2) Let us deduce from~\ref{enum:HH:H_support} that \myemph{$\Ghom$ is also $\Sr{\infty,\infty}$ continuous}.

Let $(\dif,t)\in\DiffInvMS\times[0;1]$ and $\mathcal{U}$ be an $\Sr{\infty}$-neighborhood of $\Ghom_t(\dif)$ in $\DiffInvMS$.
We need to find an $\Sr{\infty}$-neighborhood of $(\dif,t)$ in $\DiffInvMS\times[0;1]$ such that $\Ghom(\mathcal{W}) \subset \mathcal{U}$.

By definition, $\Sr{\infty}=\cup_{\rr=0}^{\infty}\Sr{\rr}$, so one can assume that $\mathcal{U}$ is $\Sr{\rr}$-open for some finite $\rr$.
In turn, the topology $\Sr{\rr}$ on $\Ci{\Mman}{\Mman}$ is induced from the topology $\Sr{0}$ on the space maps $\Ci{\Mman}{\Jr{\rr}{\Mman}{\Mman}}$ with respect to the natural inclusion $j^{\rr}:\Ci{\Mman}{\Mman} \monoArrow\Ci{\Mman}{\Jr{\rr}{\Mman}{\Mman}}$ called \myemph{$\rr$-jet prolongation}.
Thus one can assume that there exists a locally finite cover $\alpha=\{\Kman_i\}_{i\in\Lambda}$ of $\Mman$ by compact subsets and a family of open subsets $\{\Uman_i\}_{i\in\Lambda}$ of $\Jr{\rr}{\Mman}{\Mman}$ such that $\mathcal{U} = \cap_{i\in\Lambda}[\Kman_i,\Uman_i]_{\rr}$, where as usual $[\Kman_i,\Uman_i]_{\rr} = \{\func\in\Ci{\Mman}{\Mman} \mid j^{\rr}(\Kman_i)\subset\Uman_i\}$.

Since $\overline{\Vman}$ is compact, there are only finitely many $\Kman_{i_1},\ldots,\Kman_{i_n}\in\alpha$ intersecting $\overline{\Vman}$.
Denote $\Kman = \cup_{j=1}^{n}\Kman_{i_j}$.
Then $\overline{\Vman}\subset\cup_{j=1}^{n}\Kman_{i_j}$.
As $\Ghom$ is $\Wr{\infty,\infty}$-continuous, there exists finite $\rr'\geq0$, a $\Wr{\rr'}$-neighborhood $\mathcal{V}$ of $\dif$ in $\DiffInvMS$, and a closed neighborhood $J\subset[0;1]$ of $t$ such that $\Ghom(\mathcal{V}\times J)  \subset  \cap_{j=1}^{l}[\Kman_{i_j},\Uman_{i_j}]$.

Now let $\rr''=\max\{\rr,\rr'\}$.
Then $\mathcal{W}:= (\mathcal{V}\times J) \cap  \mathop{\cap}\limits_{i\in\Lambda\setminus\{i_1,\ldots,i_l\}}[\Kman_i,\Uman_i]\times[0;1]$ is $\Sr{\rr''}$ open in $\Ci{\Mman}{\Mman}$.

Note that $(\dif,t) \in \mathcal{W}$.
Indeed, by assumption, $(\dif,t)\in\mathcal{V}\times J$.
Also, as $\Kman_i \subset \Mman\setminus\Vman$ for $i\in\Lambda\setminus\{i_1,\ldots,i_{l}\}$, we have by~\ref{enum:HH:H_support} that $\dif|_{\Kman_i}=\Ghom(\dif,t)|_{\Kman_i}$, whence $\dif(\Kman_i) = \Ghom(\dif,t)(\Kman_i) \subset \Uman_i$.

Finally, we claim that $\Ghom(\mathcal{W}) \subset \mathcal{U}$.
Indeed, let $(\gdif,s)\in\mathcal{W}$.
Then $(\gdif,s)\in\mathcal{V}\times J$, whence $\Ghom(\gdif,s)(\Kman_{i_j})\subset\Uman_{i_j}$ for $j=1,\ldots,l$.
On the other hand, again due to~\ref{enum:HH:H_support}, $\gdif|_{\Kman_i}=\Ghom(\gdif,s)|_{\Kman_i}$ for $i\in\Lambda\setminus\{i_1,\ldots,i_{l}\}$, whence $\Ghom(\gdif,s)(\Kman_i)=\gdif(\Kman_i)\subset\Uman_i$.
This proves that $\Ghom$ is also $\Sr{\infty,\infty}$ continuous.
\end{proof}

\subsection{Applications to the homotopy type of $\DiffInvMS$}
Notice that all the above groups can be collected into the following commutative diagram:
\begin{equation}\label{equ:diagam_homot_eq}
\begin{aligned}
\xymatrix@R=0.8em@C=0.6em{
   & \DiffFixMS \ar@{^(->}[rr]  &   & \DiffInvMS \ar[rrr]^-{\rtrho}  &&& \Diff(\ESingMan) \ar@{=}[dd] \\
   &    & \DiffNbMS \ar@{_(->}[dl] \ar@{^(->}[dr] \\
   & \DiffFixLMS \ar@{^(->}[rr] \ar[dl]_-{\rtr} \ar@{^(->}[uu]^{\simeq}  &      & \DiffInvLMS \ar[rd]^-{\rtr} \ar[rrr]^-{\rtrho} \ar@{^(->}[uu]^{\simeq}  &&& \Diff(\ESingMan) \ar@{=}[d] \\
  \AutES \ar@{^(->}[rrrr]                   &                              & & &  \AutE \ar[rr]^-{\hrtrho}     && \Diff(\ESingMan)
}
\end{aligned}
\end{equation}
where the vertical inclusions $\xmonoArrow{\simeq}$ are homotopy equivalences due to Theorem~\ref{th:linearization}, and
\begin{align*}
    &\rtrho:\DiffInvMS \to \Diff(\ESingMan), && \rtrho(\dif) = \dif|_{\ESingMan}, \\
    &\hrtrho:\AutE  \to \Diff(\ESingMan),    && \hrtrho(\dif) = \dif|_{\ESingMan}, \\
    &\rtr:\DiffInvLMS \to \AutE,            && \rtr(\dif) = \tfib{\dif}.
\end{align*}
are natural restriction homomorphisms being $\Wr{\rr,\rr}$-continuous for all $\rr\in\{0,\ldots,\infty\}$.
The following statement allows to reduce the study of the homotopy type of $\DiffInvMS$ to simpler groups, see~\S\ref{sect:conclusion}.

\begin{sublemma}\label{lm:linth:applications}
Suppose $\ESingMan$ is compact.
Then in the diagram~\eqref{equ:diagam_homot_eq} for every consecutive pair of horizontal, vertical or diagonal arrows of the form $P\xmonoArrow{} Q \xrightarrow{\,\alpha\,} R$, the second arrow $\alpha:Q \to R$ admits a local section $s:U\to Q$ defined on some neighborhood $U$ of the unit of $R$, and therefore it is a locally trivial principal $P$-fibration over its image $\alpha(P)$.
In particular, the image of $\alpha$ is a union of path components of $R$.
\end{sublemma}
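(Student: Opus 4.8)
The plan is to verify that each of the relevant homomorphisms $\alpha : Q \to R$ in the diagram~\eqref{equ:diagam_homot_eq} admits a local section, and then to invoke Lemma~\ref{lm:principal_fibrations}. Indeed, once a local section is produced, the topological groups $Q$ and $R$ in question (being subgroups of diffeomorphism or gauge groups of a compact manifold, hence metrizable, hence paracompact Hausdorff) satisfy the hypotheses of Lemma~\ref{lm:principal_fibrations}, and parts~\ref{enum:pf:1} and~\ref{enum:pf:2} of that lemma give exactly the asserted conclusions: $\alpha$ is a locally trivial principal $P$-fibration over its image $\alpha(Q)$, which is a union of path components of $R$ (here the kernel of $\alpha$ is precisely $P$ in each of the listed cases, as is immediate from the definitions of the groups).

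So the work reduces to constructing the local sections. There are essentially three types of arrows $P\xmonoArrow{} Q \xrightarrow{\alpha} R$ to treat. The first type is the restriction map $\rtrho : \DiffInvMS \to \Diff(\ESingMan)$, $\dif\mapsto\dif|_{\ESingMan}$ (and, identically, $\rtrho$ on $\DiffInvLMS$): here a local section is obtained from the classical Cerf--Palais--Lima isotopy extension / tubular neighbourhood theorems, exactly as in the non-foliated restriction results cited in the introduction (\cite{Cerf:PMIHES:1970, Palais:CMH:1960, Lima:CMH:1964}); given a diffeomorphism $g$ of $\ESingMan$ close to the identity one extends it first to the regular neighbourhood $\ETotMan$ via the vector-bundle structure $\Epr$ (e.g.\ by $\hat g$ acting as $g$ on the base and linearly on fibres through chosen trivializations), then damps it out to the identity away from $\ESingMan$ using the cut-off $\mu$, and checks continuity. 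The second type is the map $\hrtrho : \AutE \to \Diff(\ESingMan)$: since $\AutE$ is the gauge-type group of automorphisms of the bundle $\ETotMan$ covering arbitrary diffeomorphisms of $\ESingMan$, a local section is just ``lift a diffeomorphism of the base to a bundle automorphism acting linearly on fibres'', which exists locally (indeed globally on the identity component) by the standard theory of fibre bundles — one transports fibres along the chosen connection/trivializing atlas $\xi$. The third type is the map $\rtr : \DiffInvLMS \to \AutE$, $\dif\mapsto\tfib{\dif}$: here the section is supplied directly by the construction underlying Theorem~\ref{th:linearization_of_embeddings}. Namely, given a vector bundle automorphism $\hat g\in\AutE$ near $\id$, one forms the diffeomorphism of $\Mman$ equal to $\hat g$ on a small neighbourhood of $\ESingMan$ and to the identity outside $\ETotMan$, patched by the cut-off $\mu$ on the collar; this lies in $\DiffInvLMS$ and its tangent map along $\ESingMan$ in the fibre directions is $\hat g$ again (since $\hat g$ is already linear on fibres, $\tfib{\hat g}=\hat g$), so $\rtr\circ s=\id$ on a neighbourhood of $\id$. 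The diagonal arrows $\rtr:\DiffFixLMS\to\AutES$ are handled by the same formula, restricted to diffeomorphisms fixing $\ESingMan$, landing in $\AutES$.

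Continuity of each section — with respect to both the weak $\Wr{r}$ and strong $\Sr{r}$ Whitney topologies — is checked as in the proof of Theorem~\ref{th:linearization}: the patching formulas are built from smooth algebraic operations (scalar multiplication in fibres, the fixed cut-off $\mu$, composition) applied to the input, and these are $\Wr{r,r}$- and $\Sr{r,r}$-continuous by Lemma~\ref{lm:eval_map} and the locally-finite-cover argument already carried out there; Lemma~\ref{lm:infty_continuous} then upgrades to $\infty$. The main obstacle is the bookkeeping for the diagonal map $\rtr:\DiffInvLMS\to\AutE$: one must be careful that the size of the neighbourhood $U\subset\AutE$ on which $s$ is defined can be chosen so that the patched map is genuinely a diffeomorphism of $\Mman$ (not merely an immersion) — this is where one needs $\hat g$ close to $\id$ together with the estimates on the cut-off region, and it is essentially the local, ``already-linearized'' shadow of Theorem~\ref{th:linearization_of_embeddings} rather than a new difficulty. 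Once all three sections and their continuity are in place, Lemma~\ref{lm:principal_fibrations} finishes the proof.
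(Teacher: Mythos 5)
Your overall strategy is right and matches the paper's: construct a local section for each of the second arrows and invoke Lemma~\ref{lm:principal_fibrations}\ref{enum:pf:1}--\ref{enum:pf:2}.  Your treatments of $\rtrho:\DiffInvMS\to\Diff(\ESingMan)$ (via Cerf--Palais--Lima), of $\hrtrho:\AutE\to\Diff(\ESingMan)$ (via parallel transport / lifting along the bundle), and of $\rtr:\DiffFixLMS\to\AutES$ (convex combination $(1-\mu(\nrm{\px}))\dif(\px)+\mu(\nrm{\px})\px$ in each fiber, damped by $\mu$) all agree with the paper's.

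There is, however, a genuine gap in your treatment of the arrows out of $\DiffInvLMS$, namely $\rtrho:\DiffInvLMS\to\Diff(\ESingMan)$ and especially $\rtr:\DiffInvLMS\to\AutE$.  You propose to take $\hat g\in\AutE$ near $\id$, use it near $\ESingMan$, and ``damp it out to the identity using the cut-off $\mu$.''  But the obvious way to damp out — the fiberwise convex combination used for $\AutES$ — requires $\hat g(\px)$ and $\px$ to lie in the \emph{same} fiber, i.e.\ it requires $\hat g$ to cover $\id_{\ESingMan}$.  When $\hat g\in\AutE\setminus\AutES$, the two points lie in different fibers, and the formula is not defined; there is no canonical ``straight line'' from $\hat g$ to $\id$ along a fiber ray.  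The paper resolves this by first constructing a section $\strho$ of $\rtrho:\DiffInvLMS\to\Diff(\ESingMan)$ using the lifted isotopy $\widehat{H}(\dif,1-\mu(\nrm{\px}),\px)$ — i.e.\ one cuts off the \emph{isotopy parameter}, not the map itself — which is where local contractibility of $\Diff(\ESingMan)$ and parallel transport along the isotopy enter; and then, for $\rtr:\DiffInvLMS\to\AutE$, factoring an arbitrary $k\in\AutE$ near $\id$ uniquely as $k=\gdif\circ\hstrho(\dif)$ with $\dif=\hrtrho(k)\in\Diff(\ESingMan)$ small and $\gdif=k\circ\hstrho(\dif)^{-1}\in\AutES$ small, and defining $\sts'(k):=\sts(\gdif)\circ\strho(\dif)$.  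This factorization step (reducing the general case to the ``fixed-base'' case plus the ``base-only'' case) is exactly what your proposal is missing; without it the patching you describe does not produce a well-defined map, let alone a diffeomorphism.
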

\begin{proof}
Fix an orthogonal structure on $\ETotMan$ and let $\nrm{\cdot}:\ETotMan\to[0;+\infty)$ be the corresponding norm.
Let also $\mu:[0;1]\to[0;1]$ be any $\Cinfty$ function such that $\mu([0;\aConst])=0$ and $\mu([\bConst;1])=1$.

\begin{enumerate}[wide, label=\arabic*), itemsep=1ex]
\item\label{enum:ltf:up_rho}
\myemph{Proof for $\rtrho:\DiffInvMS \to \Diff(\ESingMan)$}.
As mentioned above, due to~\cite{Cerf:PMIHES:1970, Palais:CMH:1960, Lima:CMH:1964}, the restriction map $\widehat{\rtrho}:\Diff(\Mman) \to \Emb(\ESingMan,\Mman)$ is a locally trivial fibration over its image.
In particular, there exists a $\Wr{\infty}$-neighborhood $\widehat{\mathcal{U}}$ of the identity inclusion $i:\ESingMan\subset\Mman$ and a section $\widehat{\strho}:\widehat{\mathcal{U}} \to \Diff(\Mman)$ of $\widehat{\rtrho}$.
Notice that $\Diff(\ESingMan)$ can be identified with the subspace of $\Emb(\ESingMan,\Mman)$ consisting of embeddings $\dif:\ESingMan\to\Mman$ such that $\dif(\ESingMan)=\ESingMan$.
Then $\mathcal{U}:=\widehat{\mathcal{U}}\cap\Diff(\ESingMan)$ is a neighborhood of $\id_{\ESingMan}$ in $\Diff(\ESingMan)$, and it is evident that $\widehat{\strho}(\mathcal{U})\subset\DiffInvMS$.
Hence $\widehat{\strho}|_{\mathcal{U}}:\mathcal{U}\to\DiffInvMS$ is the desired section of $\rtrho$.

\item\label{enum:ltf:bottom_rho}
\myemph{Proof for $\hrtrho:\AutE\to\Diff(\ESingMan)$}.
Recall that for compact $\ESingMan$ the group $\Diff(\ESingMan)$ is locally contractible with respect to the topology $\Wr{\infty}$.
Moreover, (see e.g.\ the proof main theorem in~\cite{Lima:CMH:1964}), there exists a $\Wr{\infty}$ open neighborhood $\UU$ of $\id_{\ESingMan}$ in $\Diff(\ESingMan)$, and a $\Wr{\infty,\infty}$ continuous map $H:\UU\times[0;1]\times\ESingMan\to\ESingMan$ such that
\begin{enumerate}[label={\rm(\alph*)}]
    \item\label{enum:sect:id} $H(\id_{\ESingMan},t,\px) = \px$ for all $t\in[0;1]$;
    \item\label{enum:sect:id_h} $H(\dif,0,\px) = \px$ and $H(\dif,1,\px) = \dif(\px)$ for all $\dif\in\UU$ and $\px\in\ESingMan$;
    \item\label{enum:sect:} for each $\dif\in\UU$ the map $H_{\dif}:[0;1]\times\ESingMan\to\ESingMan$, $H_{\dif}(t,\px) = H(\dif,t,\px)$, is a $\Cinfty$ isotopy (between $\id_{\ESingMan}$ and $\dif$).
\end{enumerate}
Fix any affine connection $\nabla$ on $\ETotMan$.
Then for every $\Cinfty$ curve $\gamma:[0;1]\to\ESingMan$ there is a ``parallel transport with respect to $\nabla$'' being a family of \myemph{linear isomorphisms}
$\Gamma_{t}:\Efib{\gamma(0)}\to \Efib{\gamma(t)}$, $t\in[0;1]$, between fibers over the points of $\gamma$ such that $\Gamma_{0}$ is the identity, e.g.~\cite[Theorem~9.8]{KolarMichorSlovak:DG:1993}.

Let us mention that if $\gamma$ is a constant path, then $\Gamma_{t}$ is the identity for all $t\in[0;1]$.

Moreover, $\Gamma$ \myemph{smoothly} depends on $\gamma$ in the following sense: if $G:[0;1]\times\ESingMan\to\ESingMan$ is a $\Cinfty$ isotopy such that $G_0=\id_{\ESingMan}$, then it induces a unique $\Cinfty$ isotopy $\widehat{G}:[0;1]\times\ETotMan\to\ETotMan$ such that $\widehat{G}_{0}=\id_{\ETotMan}$, each $\widehat{G}_t$ is a vector bundle morphism over $G_t$.

In particular, there exists a $\Wr{\infty}$-continuous map $\widehat{H}:\mathcal{U}\times[0;1]\times\ETotMan\to\ETotMan$ such that for every $\dif\in\Uman$, the map
$\widehat{H}(\dif,\cdot,\cdot):[0;1]\times\ETotMan\to\ETotMan$ is an isotopy being a lifting of the isotopy $H_{\dif}$ and consisting of vector bundle isomorphisms of $\ETotMan$.
Then the required section $\hstrho:\UU\to\AutE$ of $\hrtrho$ can be given by $\hstrho(\dif) = \widehat{H}(\dif,1,\cdot):\ETotMan\to\ETotMan$.

\item\label{enum:ltf:middle_rho}
\myemph{Proof for $\rtrho:\DiffInvLMS \to \Diff(\ESingMan)$}.
Let $\widehat{H}:\mathcal{U}\times[0;1]\times\ETotMan\to\ETotMan$ be the map constructed in~\ref{enum:ltf:bottom_rho}.
It has the following two properties:
\begin{enumerate}[label=(\roman*)]
\item\label{enum:ltf:middle_rho:1} $\widehat{H}(\id_{\ESingMan}, t, \px) = \px$ for all $\px\in\ETotMan$ and $t\in[0;1]$;
\item\label{enum:ltf:middle_rho:2} $\widehat{H}(\dif, 0, \px)=\px$, for all $\dif\in\mathcal{U}$ and $\px\in\ETotMan$.
\end{enumerate}
Define the following map $\strho:\mathcal{U} \to \Ci{\Mman}{\Mman}$ by
\[
    \strho(\dif)(\px) =
    \begin{cases}
        \widehat{H}(\dif, 1-\mu(\|\px\|), \px), & \px\in\ETotMan, \\
        \px,                                    & \px\in\Mman\setminus\ETotMan.
    \end{cases}
\]
Then $\strho(\dif)$ is indeed $\Cinfty$ due to~\ref{enum:ltf:middle_rho:2}, and $\strho(\dif)|_{\ESingMan}=\dif$ for all $\dif\in\mathcal{U}$.
It is also easy to see that $\strho$ is $\Wr{\infty,\infty}$-continuous.

Moreover, it follows from~\ref{enum:ltf:middle_rho:1} that $\strho(\id_{\ESingMan})(\px) = \px$ for all $\px\in\Mman$.
In other words, $\strho(\id_{\ESingMan}) = \id_{\Mman}$, whence $\mathcal{V} := \mathcal{U} \cap \strho^{-1}(\Diff(\Mman))$ is an open neighborhood of $\id_{\ESingMan}$ in $\Diff(\ESingMan)$ such that $\strho(\mathcal{V}) \subset \Diff(\Mman)$.

Also notice that $\strho(\dif)(\px)=\widehat{H}(\dif, 1, \px)=\hstrho(\dif)(\px)$ for $\nrm{\px}\leq\aConst$, so $\strho(\dif)$ coincides with vector bundle morphism $\hstrho(\dif)$ of $\ETotMan$ near $\ESingMan$, so $\hstrho|_{\mathcal{V}}=\rtr\circ\strho$.
In other words, $\strho(\mathcal{V}) \subset \DiffInvLMS$, and thus $\strho|_{\mathcal{V}}:\mathcal{V}\to \DiffInvLMS$ is the desired section of $\rtrho$.

\item\label{enum:ltf:left_r}
\myemph{Proof for $\rtr:\DiffInvLMS\to\AutES$}.
Let $\sts:\AutES \to \Ci{\Mman}{\Mman}$ be a map defined by
\[
\sts(\dif)(\px) =
\begin{cases}
    (1-\mu(\nrm{\px}))\dif(\px) + \mu(\nrm{\px})\px, & \px\in\ETotMan, \\
    \px,                                             & \px\in\Mman\setminus\ETotMan.
\end{cases}
\]
Evidently,
\begin{enumerate}[leftmargin=6ex, label={\rm(\alph*)}]
\item\label{enum:GG:cont} $\sts$ is $\Wr{\rr,\rr}$-continuous for all $\rr\geq0$;
\item\label{enum:GG:h} $\sts(\dif)(\px) = \dif(\px)$ for all $\px\in\Rman_{\aConst}$;
\item\label{enum:GG:x} $\sts(\dif)(\px) = \px$ on $\Mman\setminus\Rman_{\bConst}$;
\item\label{enum:GG:xI_x} $\sts(\id_{\ETotMan})(\px) = \px$ for all $\px\in\Mman$.
\end{enumerate}
Since the group $\Diff(\Mman)$ of diffeomorphisms of $\Mman$ is $\Wr{\infty}$-open in $\Ci{\Mman}{\Mman}$ and $\sts$ is $\Wr{\infty,\infty}$ continuous, the following set $\mathcal{W} := \sts^{-1}(\Diff(\Mman))$ is open in $\AutES$.
Moreover, $\id_{\Mman} \in \mathcal{W}$ due to~\ref{enum:GG:xI_x}.
Also, due to~\ref{enum:GG:h}, $\rtr\circ\sts(\dif)=\dif$ and $\sts(\dif)\in\DiffFixMS$ for all $\dif\in\AutES$.

Thus, the restriction $\sts|_{\mathcal{W}}$ is the desired local section of $\rtr$ on a neighborhood of $\id_{\ETotMan}$.

\item\label{enum:ltf:right_r}
\myemph{Proof for $\rtr:\DiffInvLMS \to \AutE$}.
By \ref{enum:ltf:bottom_rho}, \ref{enum:ltf:middle_rho}, \ref{enum:ltf:left_r}, there exist
\begin{itemize}
\item
a neighborhood $\mathcal{U}$ of $\id_{\ESingMan}$ in $\Diff(\ESingMan)$ and a section $\hstrho:\mathcal{V}\to\AutE$ of $\hrtrho$.
\item
a neighborhood $\mathcal{V} \subset\mathcal{U}$ of $\id_{\ESingMan}$ in $\Diff(\ESingMan)$ and a section $\strho:\mathcal{V}\to\DiffInvLMS$ of $\rtrho$ such that $\hstrho|_{\mathcal{V}}=\rtr\circ\strho$;
\item
a neighborhood $\mathcal{W}$ of $\id_{\ETotMan}$ in $\AutES$ and a section $\sts:\mathcal{W}\to\DiffFixLMS$ of $\rtr$;
\end{itemize}
Since, by~\ref{enum:ltf:bottom_rho}, the map $\hrtrho:\AutE\to\Diff(\ESingMan)$ is a locally trivial principal $\AutES$-fibration, the following subset of $\AutE$:
\[
\mathcal{O}:=\{ \gdif \circ \hstrho(\dif) \mid \dif\in\mathcal{V}, \gdif\in\mathcal{W}\}
\]
is an open neighborhood of $\id_{\ETotMan}$ in $\AutE$.
Moreover, for each $k\in\mathcal{O}$ the representation $k=\gdif \circ \hstrho(\dif)$ with $\dif\in\mathcal{V}$ and $\gdif\in\mathcal{W}$ is unique, and $\gdif$ and $\dif$ continuously depend on $k$.
Indeed,
\begin{gather*}
    \hrtrho(k) = \hrtrho(\gdif \circ \hstrho(\dif)) = \hrtrho(\gdif) \circ \hrtrho(\hstrho(\dif)) = \id_{\ESingMan}\circ\dif = \dif, \\
    k\circ \bigl( \hstrho(\hrtrho(k)) \bigr)^{-1} = k\circ \dif^{-1} = \gdif.
\end{gather*}
Now the section $\sts':\mathcal{O}\to\DiffInvLMS$ of $\rtr:\DiffInvLMS \to \AutE$ can be given by the following formula: if $k=\gdif \circ \hstrho(\dif)$, then $\sts'(k) := \sts(\gdif) \circ \strho(\dif)$.
\qedhere
\end{enumerate}
\end{proof}

\subsection{Fibration of automorphisms of the vector bundle}\label{sect:vbundle_auts}
Denote by $\AutEXB$ the group of $\Cinfty$ vector bundle isomorphisms $\dif:\ETotMan\to\ETotMan$ over $\id_{\ESingMan}$, that is $\Epr\circ\dif=\Epr$.
We recall here that $\AutEXB$ can be identified with the space of $\Cinfty$ sections of a certain principal $\GL(\bR^{\erdim})$-fibration $\EAut \to \ESingMan$, see Lemma~\ref{lm:sectAutE} below.

Let $\vbops=\oplus_{\erdim}\Epr:\oplus_{\erdim}\ETotMan \to \ESingMan$ be the Whitney sum of $\erdim$ copies of $\ETotMan$, so each element of $\oplus_{\erdim}\ETotMan$ is an ordered $\erdim$-tuple of vectors $(\pvi{1},\ldots,\pvi{\erdim})$ belonging to the same fiber of $\Epr$.
Denote by $\Qman \subset \oplus_{\erdim}\ETotMan$ the subset consisting of linearly independent $\erdim$-tuples, called \myemph{frames}.
Then the group $\GL(\bR^{\erdim})$ freely acts on $\Qman$, so that the restriction $\vbops|_{\Qman}:\Qman\to\ESingMan$ is a principal $\GL(\bR^{\erdim})$-bundle over $\ESingMan$, and $\Epr:\ETotMan\to\ESingMan$ is the associated $\bR^{\erdim}$-bundle corresponding to the natural action of $\GL(\bR^{\erdim})$ on $\bR^{\erdim}$.

Let $\Xman = \{ (a,b) \in \Qman\times \Qman \mid \vbops(a)=\vbops(b) \}$ be the fiberwise product of the total space $\Qman$, so it consists of pairs of frames over the same point, and the correspondence $\Xman \to \ESingMan$, $(a,b)\mapsto\vbops(a)$, is the pull back of the fibration $\vbops:\Qman\to\ESingMan$ corresponding to the same map $\vbops:\Qman\to\ESingMan$.

Notice that for every such pair $(a,b)\in\Xman$ there exists a unique matrix $A\in \GL(\bR^{\erdim})$ such that $b = a A$.
Thus $(a,b)$ can be regarded as an automorphism of the fiber $\Efib{\vbops(a)}$ written in the basis $a$.

Let us get rid of dependence on the basis.
To do that notice that the group $\GL(\bR^{\erdim})$ naturally acts from the right on $\Xman$ by the rule, if $A\in \GL(\bR^{\erdim})$, and $(a,b)\in\Xman$, then $(a,b)A = (aA, bA)$.
Let $\EAut = \Xman/\GL(\bR^{\erdim})$ be the quotient space.
Then we have a natural projection $\vbr:\EAut \to \ESingMan$, and the fibers $\vbr^{-1}(\px)$ can be regarded as automorphisms of $\Efib{\px}$.

Let $\SectAT$ be the space of $\Cinfty$-sections of $\vbr$.
Then \myemph{$\SectAT$ is a group with respect to the pointwise composition of automorphisms}.
Indeed, the multiplication in the fibers of $\SectAT$ is defined as follows.
For $(a,b)\in\Xman$ denote by $[a,b]$ its equivalence class in $\EAut$.
Let $(a,b), (c,d)\in\Xman$ be two pairs of frames in the same fiber of $\SectAT$, i.e.\ $\vbops(a)=\vbops(c)$.
Then there is a unique matrix $U\in\GL(\bR^{\erdim})$ such that $bU = c$, and we put $[a,b]\cdot [c,d] := [aU, d]$.
Notice that this definition does not depend on a choice of representatives: if $A,B\in\GL(\bR^{\erdim})$ are any matrices, so $(aA,bA), (cB,dB)$ are some other representatives of $(a,b)$ and $(c,d)$, then $bA (A^{-1} U B) = cB$, whence
\[ [aA,bA]\cdot[cB,dB] := [aA (A^{-1} U B), dB] = [aU B, dB] = [aU,d]. \]
One easily checks that this operation is associative, the unit is the class $[a,a]$ for any frame $a$, while $[a,b]^{-1}=[b,a]$.

We also have a free action of $\GL(\bR^{\erdim})$ on $\EAut$ by $[a,b]A = [aA,b]$, which is transitive on each fiber of $\vbr$, so $\vbr:\EAut\to\ESingMan$ is a principal $\GL(\bR^{\erdim})$-fibration.

\begin{sublemma}\label{lm:sectAutE}
There is a bijection $\theta:\AutEXB \to \SectAT$ being a $\Wr{\rr,\rr}$- as well as $\Sr{\rr,\rr}$-homeomorphism for all $\rr\in\{0,1,\ldots,\infty\}$.
\end{sublemma}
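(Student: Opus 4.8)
The plan is to exhibit the natural ``pointwise'' correspondence $\theta$ between vector bundle automorphisms of $\ETotMan$ over $\id_{\ESingMan}$ and sections of $\vbr:\EAut\to\ESingMan$, to check that it is a bijection by elementary manipulations with frames, and then to observe that in any trivialized atlas $\theta$ becomes, fibrewise, the identity on $\GL(\bR^{\erdim})$-valued functions on the base; the homeomorphism assertion for every $\Wr{\rr}$ and $\Sr{\rr}$ will then reduce to a routine estimate.

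\medskip\noindent\emph{Construction of $\theta$ and of its inverse.}
Let $\dif\in\AutEXB$. For $\px\in\ESingMan$ the restriction $\dif_{\px}:=\dif|_{\Efib{\px}}:\Efib{\px}\to\Efib{\px}$ is a linear automorphism. Pick any frame $a=(\pv_1,\dots,\pv_{\erdim})$ of $\Efib{\px}$; then $\dif_{\px}(a):=(\dif_{\px}(\pv_1),\dots,\dif_{\px}(\pv_{\erdim}))$ is again a frame, so $(a,\dif_{\px}(a))\in\Xman$, and we set $\theta(\dif)(\px):=[\,a,\dif_{\px}(a)\,]\in\EAut$. Replacing $a$ by $aA$ with $A\in\GL(\bR^{\erdim})$ gives $\dif_{\px}(aA)=\dif_{\px}(a)A$, so $(aA,\dif_{\px}(aA))=(a,\dif_{\px}(a))A$ represents the same class; hence $\theta(\dif)(\px)$ is well defined and $\vbr(\theta(\dif)(\px))=\px$. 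Conversely, given $\sigma\in\SectAT$ and $\px\in\ESingMan$, write $\sigma(\px)=[a,b]$ with $a,b$ frames of $\Efib{\px}$ and let $\eta(\sigma)_{\px}:\Efib{\px}\to\Efib{\px}$ be the unique linear automorphism with $\eta(\sigma)_{\px}(a)=b$; replacing $(a,b)$ by $(aB,bB)$ does not change this map, so $\eta(\sigma)_{\px}$ is well defined, and collecting these maps over $\px$ yields a fibrewise-linear bijection $\eta(\sigma):\ETotMan\to\ETotMan$ over $\id_{\ESingMan}$. By construction $\eta\circ\theta=\id$ and $\theta\circ\eta=\id$ on such data, so once smoothness is checked (next paragraph) $\theta:\AutEXB\to\SectAT$ is a bijection with inverse $\eta$; a direct computation also shows that $\theta$ is compatible with the group structures.

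\medskip\noindent\emph{Reading $\theta$ in a trivialized atlas.}
Fix a trivialized atlas $\xi=\{\BChart_i:\BOset_i\times\bR^{\erdim}\to\ETotMan\}_{i\in\Lambda}$ with transition functions $A_{ij}:\BOset_i\cap\BOset_j\to\GL(\bR^{\erdim})$, chosen, using paracompactness of $\ETotMan$, so that $\{\BOset_i\}_{i\in\Lambda}$ is locally finite; let $\mathbf{e}_i:\BOset_i\to\Qman$, $\mathbf{e}_i(\px)=\bigl(\BChart_i(\px,e_1),\dots,\BChart_i(\px,e_{\erdim})\bigr)$, be the associated frame sections, where $e_1,\dots,e_{\erdim}$ is the standard basis of $\bR^{\erdim}$. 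For $\dif\in\AutEXB$ let $M^{\dif}_i:\BOset_i\to\GL(\bR^{\erdim})$ be the unique smooth map with $\BChart_i^{-1}\circ\dif\circ\BChart_i(\px,\pv)=(\px,M^{\dif}_i(\px)\pv)$, and for $\sigma\in\SectAT$ let $N^{\sigma}_i:\BOset_i\to\GL(\bR^{\erdim})$ be the unique smooth map with $\sigma(\px)=[\,\mathbf{e}_i(\px),\mathbf{e}_i(\px)N^{\sigma}_i(\px)\,]$. On overlaps one computes $M^{\dif}_j=A_{ij}M^{\dif}_iA_{ij}^{-1}$ and $N^{\sigma}_j=A_{ij}N^{\sigma}_iA_{ij}^{-1}$, so both $\dif\mapsto(M^{\dif}_i)_{i\in\Lambda}$ and $\sigma\mapsto(N^{\sigma}_i)_{i\in\Lambda}$ identify $\AutEXB$, resp.\ $\SectAT$, with the \emph{same} subspace of $\prod_{i\in\Lambda}\Ci{\BOset_i}{\GL(\bR^{\erdim})}$ cut out by this cocycle condition (this also supplies the smoothness left open above). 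Finally, taking $a=\mathbf{e}_i(\px)$ in the definition of $\theta$ gives $\dif_{\px}(\mathbf{e}_i(\px))=\mathbf{e}_i(\px)M^{\dif}_i(\px)$, hence $N^{\theta(\dif)}_i=M^{\dif}_i$ for every $i$: under the two identifications $\theta$ is literally the identity.

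\medskip\noindent\emph{Matching the topologies; the main obstacle.}
It remains to show that both identifications are homeomorphisms for $\Wr{\rr}$ and $\Sr{\rr}$, $\rr\in\{0,1,\dots,\infty\}$; combined with the previous paragraph this proves the lemma, the case $\rr=\infty$ following from Lemma~\ref{lm:infty_continuous} and its strong analogue. Since $\dif$ is fibrewise linear, $\BChart_i^{-1}\circ\dif\circ\BChart_i$ is a polynomial of degree $\leq1$ in $\pv$ whose coefficients are built from the base derivatives of $M^{\dif}_i$; therefore, for every compact ``slab'' $\BChart_i(\BComp\times D)\subset\ETotMan$ (with $\BComp\subset\BOset_i$ compact and $D\subset\bR^{\erdim}$ a closed ball) the $\Cr{\rr}$-seminorms of $\dif$ over $\BChart_i(\BComp\times D)$ and the $\Cr{\rr}$-seminorms of $M^{\dif}_i$ over $\BComp$ majorize one another, and a similar elementary computation relates $\sigma$ and $N^{\sigma}_i$. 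Since $\{\BOset_i\}_{i\in\Lambda}$, and hence $\{\BChart_i(\BOset_i\times\bR^{\erdim})\}_{i\in\Lambda}$, is locally finite, every basic weak (resp.\ strong) $\Cr{\rr}$-neighbourhood on either side can be refined to, and in turn majorizes, one built from such slabs, which is exactly what is needed. The step I expect to be the main obstacle is precisely this last bookkeeping: reconciling the Whitney topologies on $\Ci{\ETotMan}{\ETotMan}$ --- which record derivatives of $\dif$ in all directions and over the entire, possibly non-compact, fibres --- with the comparatively little data carried by the base-valued maps $M^{\dif}_i$; the rest is routine linear algebra with frames and cocycles.
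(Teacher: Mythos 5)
Your construction of $\theta$ by fibrewise restriction, the check that it is a group isomorphism, and the identification of both sides in a trivializing atlas with the same space of $\GL(\bR^{\erdim})$-valued cocycles $(M^{\dif}_i)$, $(N^{\sigma}_i)$ match the paper's argument exactly (the paper reduces to the trivial-bundle case and sets $\theta(\dif)(\px)=\dif|_{\ETotMan_{\px}}$ in general), and your slab comparison does establish that $\theta$ is a $\Wr{\rr,\rr}$-homeomorphism. For the weak-topology half of the lemma your proof is complete and, if anything, more detailed than the paper's.

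The strong-topology half, which you flag as the main obstacle, is a genuine gap rather than routine bookkeeping, and the slab comparison does not close it. Since the fibres of $\ETotMan$ are non-compact when $\erdim\geq1$, a basic $\Sr{\rr}$-neighbourhood of $\dif\in\AutEXB$ in $\Ci{\ETotMan}{\ETotMan}$ may prescribe tolerances over a locally finite family of slabs $\BChart_i(\BComp\times D_n)$ with the balls $D_n$ exhausting the fibre and tolerances $\eps_n$ shrinking faster than $1/\sup_{D_n}\|\pv\|$; since any two elements of $\AutEXB$ are fibrewise linear, their difference grows linearly along some fibre direction, so such a neighbourhood contains no element of $\AutEXB$ other than $\dif$ itself. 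Already when $\ESingMan$ is a point and $\ETotMan=\bR^{\erdim}$, the $\Sr{\rr}$-subspace topology on $\AutEXB=\GL(\bR^{\erdim})\subset\Ci{\bR^{\erdim}}{\bR^{\erdim}}$ is therefore discrete, while $\SectAT=\GL(\bR^{\erdim})$ with its usual topology is not, so $\theta^{-1}$ cannot be $\Sr{\rr,\rr}$-continuous. Thus the ``majorizes'' direction of your comparison fails for strong neighbourhoods and the $\Sr{\rr,\rr}$ claim cannot be obtained this way. Note that the paper's own proof is equally brief here (it merely says that continuity of $\theta^{\pm1}$ in all $\Wr{\rr}$ and $\Sr{\rr}$ topologies ``follows from 1)'') and does not engage with this obstruction either; the difficulty appears to lie with the statement itself, not with a step that the paper supplies and you omit.
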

\begin{proof}
1) First suppose that $\Epr$ is a trivial vector bundle, so $\ETotMan=\ESingMan\times\bR^{\erdim}$.
Then $\vbr$ is also a trivial fibration, whence $\EAut=\ESingMan\times\GL(\bR^{\erdim})$ and $\Cinfty$ sections are just $\Cinfty$ maps $A:\ESingMan\to\GL(\bR^{\erdim})$.
Furthermore, each $\dif\in\AutEXB$ is a $\Cinfty$ map given by $\dif:\ESingMan\times\bR^{\erdim}\to\ESingMan\times\bR^{\erdim}$, $\dif(\px, \pv) = (\px, A_{\dif}(\px)\pv)$, for some map $A_{\dif}:\ESingMan\to\GL(\bR^{\erdim})$.
It follows from the formula for $\dif$ that $A_{\dif}$ is $\Cinfty$ iff $\dif$ is so.
Then $\theta$ can be defined by $\theta(\dif) = A_{\dif}$.

2) Consider the general case.
Let $\dif\in\AutEXB$.
Then $\dif(\ETotMan_{\px})=\ETotMan_{\px}$, and the restriction $\dif|_{\ETotMan_{\px}}:\ETotMan_{\px} \to \ETotMan_{\px}$ is a linear isomorphism for every $\px\in\ESingMan$.
Hence we can define the following section $\theta(\dif):\ESingMan\to\EAut$ of $\vbr$ by $\theta(\dif)(\px) = \dif|_{\ETotMan_{\px}}$, $\px\in\ESingMan$.
Passing to local coordinates, see 1), we obtain that $\theta(\dif)$ is $\Cinfty$, and thus we get a well-defined map $\theta:\AutEXB \to \SectAT$.
Moreover, again it follows from 1) that $\theta$ is a continuous bijection and its inverse is also continuous with respect to all $\Wr{\rr}$ and $\Sr{\rr}$ topologies.
\end{proof}

\subsection{Conclusion}\label{sect:conclusion}
Using short exact sequences of the corresponding fibrations, the above observations allow to reduce (at least partially) the computation of the homotopy types of $\DiffInvMS$ to study
\begin{itemize}[leftmargin=*]
\item the identity path components of simpler groups $\DiffNbMS$, $\AutES$, and $\Diff(\ESingMan)$,
\item and the images of all arrows $\rtrho$ and $\rtr$ in the corresponding groups $\pi_0\Diff(\ESingMan)$, $\pi_0\AutES$, and $\pi_0\AutE$.
\end{itemize}

The group $\Diff(\ESingMan)$ might be simpler, since $\dim(\ESingMan) < \dim(\Mman)$, and the homotopy types of $\DiffId(\ESingMan)$ for manifolds of dimensions $0,1,2$ are completely known, and they also mostly computed for $\dim(\ESingMan)=3$.

Also the group $\AutES$ reduces to the space of sections of certain $\GL$-fibration over $\ESingMan$, see Lemma~\ref{lm:sectAutE}, and can be studied by purely homomopical methods of obstruction theory.

Finally, the group $\DiffNbMS$, on the one hand, might be regarded simpler in a ``conceptual'' sense: if we regard $\ESingMan$ as a ``singularity'', then $\DiffNbMS$ consists of diffeomorphisms supported out of that singularity.
On the other hand, if $\Mman$ is compact, then $\DiffNbMS$ can be identified with the group of compactly supported diffeomorphism of $\Mman\setminus\ESingMan$.
Such groups are widely studied, e.g.~\cite{Tsuboi:Fol:2006, LechRybicki:BCP:2007, Tsuboi:ASPM:2008, Rybicki:APM:2011, Fukui:PRIMS:2012, Fukui:JMSJ:2012, Rybicki:AMB:2019}.
\section{Linearization theorem for leaf preserving diffeomorphisms}\label{sect:linearization_foliations}
In this section we will apply linearization theorem to leaf preserving diffeomorphisms for singular foliations, see Theorems~\ref{th:linearization:fol}, \ref{th:qDiffLPInvLFolS} below.

\subsection{Homogeneous partitions}\label{sect:homog_partitions}
Let $\Foliation$ be a partition of a manifold $\Mman$.
The elements of $\Foliation$ will also be called \myemph{leaves}.
A subset $\ESingMan \subset \Mman$ is \myemph{$\Foliation$-saturated}, if $\ESingMan$ is a union of leaves of $\Foliation$.
For an open subset $\Uman\subset\Mman$ denote by $\Foliation|_{\Uman}$ the partition of $\Uman$ into path components of the non-empty intersections $\Uman\cap\omega$ for all $\omega\in\Foliation$.
We will call $\Foliation|_{\Uman}$ the \myemph{restriction of $\Foliation$ onto $\Uman$}.

Let $\Uman\subset\Mman$ be a subset.
Then a map $\dif:\Uman\to\Mman$ will be called
\begin{itemize}
\item
\myemph{$\Foliation$-foliated} if for each $\omega\in\Foliation$ the image $\dif(\omega\cap\Uman)$ is contained in some (possibly distinct from $\omega$) leaf of $\Foliation$;
\item
\myemph{\FLP} if $\dif(\omega\cap\Uman) \subset\omega$ for all $\omega\in\Foliation$.
\end{itemize}
Denote by $\DiffLP(\Foliation)$, the group of \FLP\ diffeomorphisms of $\Mman$, and for each subset $\ESingMan \subset \Mman$ put:
\begin{gather*}
    \DiffLPInvFolS := \DiffLP(\Foliation) \cap \DiffInvMS, 
    \qquad 
    \DiffLPFixFolS := \DiffLP(\Foliation) \cap \DiffFixMS, \\
    \DiffLPNbFolS  := \DiffLP(\Foliation) \cap \DiffNbMS. 
\end{gather*}
If $\ESingMan$ is a submanifold with a regular neighborhood $\Epr:\ETotMan\to\ESingMan$, then we also denote:
\begin{align*}
    \DiffLPInvLFolS &:= \DiffLP(\Foliation) \cap \DiffInvLMS, &
    \DiffLPFixLFolS &:= \DiffLP(\Foliation) \cap \DiffFixLMS.
\end{align*}

\begin{subdefinition}\label{def:homogeneous_foliation}
Let $\ESingMan$ be an $\Foliation$-saturated submanifold of $\Mman$ with a regular neighborhood $\Epr:\ETotMan\to\ESingMan$.
Say that a neighborhood $\Uman$ of $\ESingMan$ in $\ETotMan$ is \myemph{$\Foliation$-homogeneous (with respect to $p$)}, whenever it has the following property:
\begin{itemize}
\item
if $\px,\py\in\Uman$ and $\tau>0$ are such that $\tau\px,\tau\py\in\Uman$ and $\px,\py$ belong to the same element of $\Foliation$, then $\tau\px,\tau\py$ also belong to the same element $\Foliation$.
\end{itemize}
\end{subdefinition}
Evidently, if $\Uman$ is $\Foliation$-homogeneous, then so is any other neighborhood $\Vman\subset\Uman$ of $\ESingMan$ in $\ETotMan$.

Before stating our main result Theorem~\ref{th:linearization:fol} we will present a class of partitions admitting $\Foliation$-homogeneous neighborhoods of saturated submanifolds which will be useful to keep in mind.

\begin{subdefinition}
Let $\func:\Mman\to\Rman$ be a map into some set $\Rman$, and $\ESingMan\subset\Mman$ a subset.
Then by an \myemph{$(\func,\ESingMan)$-partition} we will call a partition $\Foliation_{\func,\ESingMan}$ of $\Mman$ into path components of $\ESingMan$ and path components of the sets $\func^{-1}(c)\setminus\ESingMan$ for all $c\in\Rman$.
\end{subdefinition}

\begin{sublemma}\label{lm:vbhomog}
Let $\Epr:\ETotMan\to\ESingMan$ be a vector bundle and $\func:\ETotMan\to\bR$ a continuous function such that
\begin{enumerate}[leftmargin=*]
\item\label{enum:vbhomog:cond:homog}
$\func$ is homogeneous of some degree $k>0$ (possibly fractional) on fibers, i.e.\ $\func(\tau\pv) = \tau^{k}\func(\pv)$ for all $\tau>0$ and $\pv\in\ETotMan$;
\item\label{enum:vbhomog:cond:pathcomp}
for some $a<0$ and $b>0$ the path components of $\func^{-1}(a)$ and $\func^{-1}(b)$ are closed in $\ETotMan$.
\end{enumerate}
Such a function will be called \myemph{\AH}.
Let also $\Foliation$ be the $(\func,\ESingMan)$-partition of $\ETotMan$.
Then
\begin{enumerate}[label={\rm(\alph*)}]
\item\label{enum:vbhomog:homog}
each neighborhood $\Uman$ of $\ESingMan$ in $\ETotMan$ is $\Foliation$-homogeneous;
\item\label{enum:vbhomog:omega_lim_pt}
$\overline{\omega}\setminus\omega \subset\ESingMan$ for all $\omega\in\Foliation$.
\end{enumerate}
\end{sublemma}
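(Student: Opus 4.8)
The plan is to run everything through the radial scalar multiplications $m_{\tau}\colon\ETotMan\to\ETotMan$, $m_{\tau}(\px)=\tau\px$ for $\tau>0$. Each $m_\tau$ is a homeomorphism of $\ETotMan$ with inverse $m_{1/\tau}$; it fixes $\ESingMan$ pointwise, preserves $\ETotMan\setminus\ESingMan$, and by homogeneity of $\func$ satisfies $\func\circ m_\tau=\tau^{k}\func$. Consequently $m_\tau\bigl(\func^{-1}(c)\bigr)=\func^{-1}(\tau^{k}c)$, and $m_\tau$ carries the path components of $\func^{-1}(c)\setminus\ESingMan$ bijectively onto those of $\func^{-1}(\tau^{k}c)\setminus\ESingMan$. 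I shall also use two elementary observations: $\func\equiv0$ on $\ESingMan$ — for $\px\in\ESingMan$ one has $\tau\px=\px$, hence $\func(\px)=\func(\tau\px)=\tau^{k}\func(\px)$ for all $\tau>0$; and $\ESingMan$ is closed in $\ETotMan$, being the zero set $\nrm{\cdot}^{-1}(0)$ of the continuous norm attached to an orthogonal structure on $\Epr$.

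For (a), let $\px,\py\in\Uman$ lie in a common leaf $\omega$ and let $\tau>0$ satisfy $\tau\px,\tau\py\in\Uman$. If $\omega$ is a path component of $\ESingMan$, then $\px,\py\in\ESingMan$, so $\tau\px=\px$ and $\tau\py=\py$ lie in the same leaf. Otherwise $\omega$ is a path component of $\func^{-1}(c)\setminus\ESingMan$ for some $c\in\bR$; a path $\gamma$ in $\omega$ from $\px$ to $\py$ produces the path $m_\tau\circ\gamma$ from $\tau\px$ to $\tau\py$, which remains inside $\func^{-1}(\tau^{k}c)\setminus\ESingMan$, so $\tau\px$ and $\tau\py$ again lie in one leaf. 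Note that this uses neither hypothesis~(2) nor star-likeness of $\Uman$.

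For (b), fix a leaf $\omega\in\Foliation$. If $\omega\subset\ESingMan$, then $\overline{\omega}\subset\overline{\ESingMan}=\ESingMan$, so $\overline{\omega}\setminus\omega\subset\ESingMan$. If $\omega$ is a path component of $\func^{-1}(c)\setminus\ESingMan$ with $c\neq0$, then $\func^{-1}(c)\cap\ESingMan=\varnothing$ because $\func$ vanishes on $\ESingMan$, so $\omega$ is in fact a whole path component of $\func^{-1}(c)$. Taking $\tau=(b/c)^{1/k}$ if $c>0$ and $\tau=(a/c)^{1/k}$ if $c<0$, the set $m_\tau(\omega)$ is a path component of $\func^{-1}(b)$, respectively of $\func^{-1}(a)$, hence closed in $\ETotMan$ by hypothesis~(2); applying the homeomorphism $m_{1/\tau}$ shows $\omega$ itself is closed in $\ETotMan$, so $\overline{\omega}\setminus\omega=\varnothing$.

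The remaining case $c=0$ is where the real work lies, since the scaling above only yields that $\func^{-1}(0)\setminus\ESingMan$ is $m_\tau$-invariant for every $\tau>0$, i.e.\ a union of open rays emanating from $\ESingMan$. I would pass to the radial homeomorphism $\ETotMan\setminus\ESingMan\cong(0;+\infty)\times\Sman$, where $\Sman=\nrm{\cdot}^{-1}(1)$ is the unit sphere bundle: under it $\func^{-1}(0)\setminus\ESingMan$ becomes $(0;+\infty)\times\BComp$ with $\BComp=\Sman\cap\func^{-1}(0)$ closed in $\Sman$, and a leaf $\omega$ becomes $(0;+\infty)\times C$ for a path component $C$ of $\BComp$. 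Since $\overline{\omega}\cap(\ETotMan\setminus\ESingMan)$ then corresponds to $(0;+\infty)\times\overline{C}$, the assertion $\overline{\omega}\setminus\omega\subset\ESingMan$ is equivalent to $C$ being closed in $\Sman$, i.e.\ to the path components of the link $\BComp=\Sman\cap\func^{-1}(0)$ being closed. This is the main obstacle: continuity of $\func$ together with hypothesis~(2) do not obviously suffice for it, so one must either tease the required closedness out of the hypotheses with some care or impose mild extra regularity (for instance $\func$ submersive off $\ESingMan$, or $\func^{-1}(0)=\ESingMan$, which already makes $\BComp$ empty). Granting it, $\overline{\omega}=\omega$ and (b) is proved.
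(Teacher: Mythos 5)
Your arguments for part~(a) and for part~(b) in the case $c\neq 0$ are correct and coincide with the paper's: the radial rescaling $\px\mapsto\tau\px$, the observation that $\func\equiv 0$ on $\ESingMan$, and transporting the closedness assumption from the levels $a,b$ to an arbitrary level $c\neq 0$.

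For $c=0$ you have correctly identified a genuine gap, and your reformulation through $\ETotMan\setminus\ESingMan\cong(0;+\infty)\times\Sman$ isolates it cleanly: the assertion $\overline{\omega}\setminus\omega\subset\ESingMan$ is equivalent to the path component $C=\omega\cap\Sman$ of the link $\BComp=\Sman\cap\func^{-1}(0)$ being closed in $\Sman$. The paper's own handling of this case is not sound. From $\overline{\omega}\subset\func^{-1}(0)$ and $\omega\subset\func^{-1}(0)\setminus\ESingMan$ it infers $\overline{\omega}\setminus\omega\subset\func^{-1}(0)\setminus\bigl(\func^{-1}(0)\setminus\ESingMan\bigr)=\ESingMan$; but that inclusion is equivalent to $\overline{\omega}\cap\bigl(\func^{-1}(0)\setminus\ESingMan\bigr)\subset\omega$, i.e.\ to $\omega$ being closed in $\func^{-1}(0)\setminus\ESingMan$, which is precisely what one must prove. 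Worse, hypotheses (1)--(2) alone cannot rescue it. For $c\neq 0$ the map $\pw\mapsto\pw/\nrm{\pw}$ is a homeomorphism of $\func^{-1}(c)$ onto the open subset $\{v\in\Sman: c\cdot\func(v)>0\}$ of the manifold $\Sman$, so the path components of $\func^{-1}(c)$ are automatically open-and-closed in the closed set $\func^{-1}(c)$, hence closed in $\ETotMan$; condition~(2) is thus automatic and places no constraint on $\func^{-1}(0)$ whatsoever. Taking $\ESingMan$ a point, $\ETotMan=\bR^3$, a closed topologist's sine curve $T\subset S^2$, and $\func(\pw)=\nrm{\pw}\cdot\mathrm{dist}\bigl(\pw/\nrm{\pw},T\bigr)$ yields an \AH\ function of degree~$1$ for which the leaf $\omega$ lying over the open sine arc of $T$ has $\overline{\omega}\setminus\omega$ equal to the open cone over the limit segment of $T$, which is not contained in $\ESingMan$. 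So part~(b) really does need the additional hypothesis that the path components of $\func^{-1}(0)$ (equivalently, of the link $\BComp$) be closed; this is exactly what the paper verifies in the application to Theorem~\ref{th:linearization_funcs} via Lojasiewicz triangulability of polynomial level sets, and your suggested sufficient conditions ($\func$ submersive off $\ESingMan$, or $\func^{-1}(0)=\ESingMan$) also serve.
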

\begin{proof}
First we show that for any $c\not=0$ the path components of the set $\func^{-1}(c)$ are closed in $\ETotMan$.
For definiteness assume that $c>0$.
Put $\tau = (c/b)^{1/k}$.
Then we have a well-defined homeomorphism $\dif:\ETotMan\to\ETotMan$, $\dif(\pv) = \tau \pv$.

Notice that $\dif(\func^{-1}(b)) = \func^{-1}(c)$.
Indeed, if $\func(\pv)=b$, then
\[ \func(\dif(\pv)) = \func(\tau \pv) = \tau^{k}\func(\pv) = (c/b) \cdot b = c.\]
Hence $\dif$ maps the path components of $\func^{-1}(b)$ onto path components of $\func^{-1}(c)$.
Since path components of $\func^{-1}(b)$ are closed in $\ETotMan$, so must be path components of $\func^{-1}(c)$.
The proof for $c<0$ is similar and uses $a$ instead of $b$.

Further let us mention that $\func(\ESingMan)=0$.
Indeed, let $\pv\in\ESingMan$, and $\tau>1$.
Then $\pv=\tau\pv$ and $\func(\pv)=\func(\tau\pv) = \tau^{k}\func(\pv)$, which is possible only when $\func(\pv)=0$.

We will show that $\ETotMan$ is $\Foliation$-homogeneous.
Then, as mentioned above, any other neighborhood of $\ESingMan$ in $\ETotMan$ is $\Foliation$-homogeneous as well.
Let $\omega$ be a leaf of $\Foliation$, $\px, \py \in \omega$ any two points, and $\tau>0$.
It is necessary to show that $\tau\px,\tau\py$ also belong to the same leaf and $\overline{\omega}\setminus\omega \subset\ESingMan$.
Consider the following cases.

(i) If $\omega$ is a path component of $\ESingMan$, then $\tau\px = \px, \tau\py=\py$, so they belong to the same leaf $\omega$ of $\Foliation$.
Moreover, since $\ESingMan$ is closed in $\ETotMan$ and contains $\omega$, we have that $\overline{\omega}\subset\ESingMan$, whence $\overline{\omega}\setminus\omega \subset \ESingMan$ as well.

(ii) Suppose $\omega$ is a path component of the set $\func^{-1}(c)\setminus\ESingMan$ for some $c\in\bR$, so there exists a path $\gamma:[0;1]\to\omega$ such that $\gamma(0)=\px$ and $\gamma(1)=\py$.
In particular, $\func\circ\gamma:[0;1]\to\bR$ is a constant map into $c\in\bR$.
Then $\tau\gamma: [0;1]\to\ETotMan$ is a path connecting $\tau\px$ and $\tau\py$.
Since $\gamma([0;1])\subset\ETotMan\setminus\ESingMan$, we also have that $\tau\gamma([0;1])\subset\ETotMan\setminus\ESingMan$.
Moreover, $\func(\tau \gamma(t)) = \tau^{k} \func(\gamma(t)) = \tau^{k} c$, $t\in[0;1]$.
Thus $\tau\gamma$ is a path between $\tau\px$ and $\tau\py$ in some path component of $\func^{-1}(\tau^{k} c) \setminus \ESingMan$, being by definition a leaf of $\Foliation$.
This proves~\ref{enum:vbhomog:homog}.

If $c\not=0$, then $\omega$ is a path component of $\func^{-1}(c)$ and is closed as shown above.
Hence $\overline{\omega}\setminus\omega = \varnothing \subset \ESingMan$.

On the other hand, suppose $c=0$.
Since $\omega \subset \func^{-1}(0)\setminus\ESingMan$, and $\func^{-1}(0)$ is closed, we have that $\overline{\omega}\subset \func^{-1}(0)$, whence $\overline{\omega}\setminus\omega \subset \func^{-1}(0) \setminus \bigl(\func^{-1}(0)\setminus\ESingMan\bigr) = \ESingMan$.
\end{proof}
Fiberwise homogeneous functions on tangent and cotangent bundles are widely studies, see e.g.~\cite{Beem:CJM:1970, Bibikov:JGP:2014} and references therein.

Let us also mention that the function $\func$ in Lemma~\ref{lm:vbhomog} is assumed to be only continuous, whence the corresponding homogeneous partition $\Foliation$ is not necessarily smooth, and might have even a <<fractal>> structure.
\begin{subexample}\rm
Let $\Epr:\ETotMan = \ESingMan\times\bR^{\erdim} \to \ESingMan$ be a trivial vector bundle over connected manifold $\ESingMan$, $k>0$ a positive real number, $\gfunc:S^{\erdim-1}\to\bR$ a continuous (possibly even nowhere differentiable) function, and $\func:\ETotMan \to \bR$ a function given by
\[
\func(\px,\pv) =
\begin{cases}
    0, & \pv=0, \\
    \|\pv\|^k\cdot\gfunc(\pv/\|\pv\|)\,, & \pv\not=0,
\end{cases}
\]
where $\|\pv\|=\sqrt{\pvi{1}^2+\cdots+\pvi{\erdim}^2}$ is the usual Euclidean norm of a vector $\pv=(\pvi{1},\ldots,\pvi{\erdim})$.
Notice that if $\gfunc(\pv)=0$ for some $\pv\in S^{\erdim-1}$, then $\func$ is zero on the line $\bR\pv$ passing through the origin and $\pv$.

Evidently, if $k=1$ and $\gfunc \equiv 1$ is constant, then $\func(\px,\pv)=\|\pv\|$.
Also if $k=2$, $\erdim=2$, and $\gfunc:S^1\to\bR$ is given by $\gfunc(\phi) = \cos(\phi)\sin(\phi)=\tfrac{\sin(2\phi)}{2}$, then $\func(\px,\pu,\pv)=\pu\pv$ for $(\pu,\pv)\in\bR^2$.

We claim that $\func$ satisfies assumptions of Lemma~\ref{lm:vbhomog}.
\begin{enumerate}[wide]
\item
Indeed, let $\tau>0$ and $(\px,\pv)\in\ESingMan\times\bR^{\erdim}$.
Then $\tau(\px,\pv) = (\px,\tau\pv)$.
Hence if $\pv=0$, then $\func\bigl( \tau(\px,0) \bigr) = \func\bigl( \px,0) \bigr) = 0 = \tau^{k}\func(\px,\pv)$.
On the other hand, if $\pv\not=0$, then
\[
\func(\px, \tau\pv) =
\|\tau\pv\|^k\cdot\gfunc(\tau\pv/\|\tau\pv\|) =
\tau^{k} \|\pv\|^k\cdot\gfunc(\pv/\|\pv\|) =
\tau^{k}\func(\px,\pv),
\]
so $\func$ is homogeneous of degree $k$.

\item
Let $G_{-} = \ESingMan \times \gfunc^{-1}\bigl((-\infty,0)\bigr)$.
Then for each $a<0$ we have a homeomorphism $\alpha: \func^{-1}(a) \to G_{-}$, $\alpha(\px,\pv) = (\px, \pv/\|\pv\|)$, whose inverse is given by $\alpha^{-1}(\px,\pv)=\bigl(\px, -\bigl|a/\gfunc(\pv)\bigr|^{1/k} \pv \bigr)$.
Indeed, if $(\px,\pv)\in G_{-}$, so $\|\pv\|=1$, then $\func\circ\alpha^{-1}(\px,\pv) = -\bigl|a/\gfunc(\pv)\bigr| \gfunc(\pv) = -|a|=a$.

Since $G_{-}$ is an open subset of the manifold $\ESingMan\times S^{\erdim-1}$, its path components are open closed in $G_{-}$.
Hence each path component $\omega$ of $\func^{-1}(a)$ is open closed in $\func^{-1}(a)$.
But $\func^{-1}(a)$ is closed in $\ESingMan\times\bR^{\erdim}$, whence $\omega$ is closed in $\ESingMan\times\bR^{\erdim}$ as well.
\end{enumerate}
\end{subexample}

\begin{subtheorem}[Foliated linearization theorem]\label{th:linearization:fol}
Let $\Foliation$ be a partition of $\Mman$, $\ESingMan$ an $\Foliation$-saturated submanifold with a regular neighborhood $\Epr:\ETotMan\to\ESingMan$.
Suppose that there exists a compact neighborhood $\Uman \subset\ETotMan$ of $\ESingMan$ such that
\begin{enumerate}[label={\rm(\alph*)}]
    \item\label{enum:lin_cond:homog} $\Uman$ is $\Foliation$-homogeneous;
    \item\label{enum:lin_cond:nbh}   $\Uman\cap(\overline{\omega}\setminus\omega) \subset \ESingMan$ for each $\omega\in\Foliation$.
\end{enumerate}
Then the inclusion of pairs of groups of \FLP\ diffeomorphisms:
\begin{gather}
    \label{equ:incl_DiffLP_Fol}
    \bigl(\DiffLPFixLFolS, \DiffLPInvLFolS\bigr) \subset \bigl( \DiffLPFixFolS, \DiffLPInvFolS \bigr),
\end{gather}
is mutually a $\Wr{\infty,\infty}$- and $\Sr{\infty,\infty}$-homotopy equivalence.
\end{subtheorem}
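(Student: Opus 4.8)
The plan is to show that the very same deformation $\Ghom\colon\DiffInvMS\times[0;1]\to\DiffInvMS$ built in the proof of Theorem~\ref{th:linearization} restricts to a deformation of the foliated pair~\eqref{equ:incl_DiffLP_Fol}. Recall from that proof that $\Ghom$ is $\Wr{\infty,\infty}$- and $\Sr{\infty,\infty}$-continuous, $\Ghom_1=\id$, $\Ghom_0$ carries $\DiffInvMS$ into $\DiffInvLMS$ and $\DiffFixMS$ into $\DiffFixLMS$, the subgroups $\DiffInvLMS$ and $\DiffFixLMS$ are $\Ghom$-invariant, $\Ghom(\dif,t)$ agrees with $\dif$ on $\ESingMan$ for every $t$, and near $\ESingMan$ one has the scaling formula $\Ghom(\dif,t)(\px)=\dif(\phi(\dif,t,\px)\px)/\phi(\dif,t,\px)$ of~\eqref{equ:map_H}, with $\Ghom(\dif,t)=\dif$ off $\Rman_{\bConst\adelta(\dif)}$ by~\ref{enum:HH:H_support}, for a strictly positive $\Wr{\infty}$-continuous function $\adelta$ that may be shrunk at will. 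Hence the only new thing to establish is that \emph{$\Ghom$ preserves $\Foliation$-leaf preservation}, i.e.\ $\dif\in\DiffLP(\Foliation)$ forces $\Ghom(\dif,t)\in\DiffLP(\Foliation)$ for all $t$; given this, $\Ghom$ restricts to compatible continuous deformations of $\DiffLPInvFolS=\DiffLP(\Foliation)\cap\DiffInvMS$ into $\DiffLPInvLFolS$ and of $\DiffLPFixFolS$ into $\DiffLPFixLFolS$, keeping $\DiffLPInvLFolS$ and $\DiffLPFixLFolS$ invariant, which is exactly the asserted mutual homotopy equivalence.

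First I would pin down the size of $\adelta$. Since $\ESingMan$ is compact and $\Uman$ a neighbourhood of it, fix $\eps_0>0$ with $\Rman_{\eps_0}\subset\Int\Uman$ and run the construction of $\Ghom$ with the open disk bundle $\BNbh:=\{\nrm{\px}<\eps_0\}$ as the ambient neighbourhood of $\ESingMan$; after possibly shrinking $\adelta$ we get $\Rman_{\adelta(\dif)}\subset\BNbh\subset\Uman$ for every $\dif$. Then Theorem~\ref{th:linearization_of_embeddings}\ref{enum:mainth:H_Rd} gives $\Ghom(\dif,t)(\Rman_{\adelta(\dif)})\subset\BNbh\subset\Uman$ for all $t$, and applying it at $t=1$, where $\Ghom_1=\id$, also $\dif(\Rman_{\adelta(\dif)})\subset\Uman$. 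Thus for $\px$ close enough to $\ESingMan$ all four points $\phi\px$, $\dif(\phi\px)$, $\px$ and $\Ghom(\dif,t)(\px)=\dif(\phi\px)/\phi$ (with $\phi=\phi(\dif,t,\px)$) lie in the $\Foliation$-homogeneous set $\Uman$ — this is precisely the configuration the homogeneity hypothesis is designed for.

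Next I would verify leaf preservation for $t\in(0;1]$. Let $\dif\in\DiffLP(\Foliation)\cap\DiffInvMS$ and $\px\in\Mman$. If $\px\in\Mman\setminus\ETotMan$ or $\nrm{\px}\geq\bConst\adelta(\dif)$, then $\Ghom(\dif,t)(\px)=\dif(\px)$ (by the definition of $\Ghom$ and~\ref{enum:HH:H_support}), which lies in the leaf of $\px$ because $\dif$ is \FLP. Otherwise $\nrm{\px}<\bConst\adelta(\dif)$; put $\phi=\phi(\dif,t,\px)\in(0;1]$. By the previous paragraph $\phi\px$ and $\dif(\phi\px)$ both lie in $\Uman$, and they lie in a common leaf since $\dif$ is \FLP; applying $\Foliation$-homogeneity of $\Uman$ with the scalar $\tfrac1\phi\geq1$ we conclude that $\tfrac1\phi(\phi\px)=\px$ and $\tfrac1\phi\,\dif(\phi\px)=\Ghom(\dif,t)(\px)$ — which are again in $\Uman$ — belong to a common leaf. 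Hence $\Ghom(\dif,t)\in\DiffLP(\Foliation)$.

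I expect the case $t=0$ to be the main obstacle, because there $\Ghom(\dif,0)$ coincides with the bundle morphism $\tfib{\dif}$ near $\ESingMan$ and the scaling formula degenerates, so homogeneity no longer applies directly. On $\ESingMan$ the map $\Ghom(\dif,0)$ still agrees with $\dif$, hence is \FLP\ there. For $\px\notin\ESingMan$ with leaf $\omega$, I would argue by continuity: by Lemma~\ref{lm:eval_map} and $\Wr{\infty,\infty}$-continuity of $\Ghom$ the path $s\mapsto\Ghom(\dif,s)(\px)$ is continuous, so $\Ghom(\dif,0)(\px)=\lim_{s\to0^{+}}\Ghom(\dif,s)(\px)$ lies in $\overline{\omega}$ by the case already done. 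If it were not in $\omega$, then (it lies in $\Uman$ when $\px$ is close to $\ESingMan$, and equals $\dif(\px)\in\omega$ otherwise) it would lie in $\Uman\cap(\overline{\omega}\setminus\omega)\subset\ESingMan$ by hypothesis~\ref{enum:lin_cond:nbh}; but $\Ghom(\dif,0)$ is a diffeomorphism carrying $\ESingMan$ onto $\ESingMan$, which would force $\px\in\ESingMan$, a contradiction. So $\Ghom(\dif,0)(\px)\in\omega$, completing the verification and hence the proof; the only genuinely nontrivial ingredient is this last step, where hypothesis~\ref{enum:lin_cond:nbh} together with the diffeomorphism property replaces the homogeneity argument that fails at $t=0$.
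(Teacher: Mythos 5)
Your proposal is correct and follows the same route as the paper's own proof: you restrict the deformation $\Ghom$ built in Theorem~\ref{th:linearization}, verify that the four relevant points $\px$, $\phi\px$, $\dif(\phi\px)$, $\Ghom(\dif,t)(\px)$ all lie in $\Uman$, invoke $\Foliation$-homogeneity of $\Uman$ for $t>0$, and for $t=0$ combine continuity of $\Ghom$ with hypothesis~\ref{enum:lin_cond:nbh} and the fact that $\Ghom_0$ is a diffeomorphism preserving $\ESingMan$ — exactly as the paper does. The only trivial caveat is that Theorem~\ref{th:pres_max_rank}, which underlies Theorem~\ref{th:linearization_of_embeddings}, requires $\BNbh$ compact, so one should take the closed disk bundle $\Rman_{\eps_0}$ rather than the open one $\{\nrm{\px}<\eps_0\}$; this changes nothing in the argument.
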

\begin{proof}
Let $\Ghom:\DiffInvMS\times[0;1]\to\DiffInvMS$ be the deformation of the group $\DiffInvMS$ into $\DiffInvLMS$ constructed in Theorem~\ref{th:linearization} for $\BNbh=\Uman$.
It suffices to check that $\Ghom(\DiffLPInvFolS\times[0;1])\subset \DiffLPInvFolS$.
Then the restriction of $\Ghom$ to $\DiffLPInvFolS\times[0;1]$ will be a deformation of the right pair in~\eqref{equ:incl_DiffLP_Fol} into the corresponding left pair.

Let $\dif\in\DiffLPInvFolS$ and $\Vman = \Rman_{\bConst\adelta(\dif)}$.
Let also $\px\in\Mman$, and $\omega$ be the leaf of $\Foliation$ containing $\px$.
We need to show that $\Ghom(\px,t)\in\omega$ for all $t\in[0;1]$.
This will imply that $\Ghom_t(\dif)$ is \FLP.

\begin{enumerate}[label={\arabic*)}, wide, itemsep=1ex, topsep=1ex]
\item\label{enum:chk:1:M_V}
If $(\px,t)\in \bigl(\ESingMan\cup \overline{\Mman\setminus\Vman}\bigr) \times [0;1]$, then by~\ref{enum:HH:H_support}, formula~\eqref{equ:G_hom} for~$\Ghom$, and~\ref{eunm:H_he:4} from the proof of Theorem~\ref{th:linearization}, $\Ghom(\dif,t)(\px)=\dif(\px) \in \omega$, since $\dif$ is \FLP.

\item\label{enum:chk:1:V}
Suppose $(\px,t)\in\Vman\times(0;1]$ and let $\tau=\phi(\px,t)$.
Then by~\eqref{equ:map_phi}, $\tau>0$.
We claim that the four points $\px,\tau\px,\dif(\tau\px), \Ghom(\dif,t)(\px):=\tfrac{1}{\tau}(\dif(\tau\px))$ belong to $\Uman$.

Indeed, $\px\in\Vman\subset\Uman$ by definition.
Since $\Vman$ is \STL\ and $\tau=\phi(\px,t)\in[0;1]$, we have that $\tau\px\in\Vman\subset\Uman$.
Moreover, by Theorem~\ref{th:linearization_of_embeddings}, $\Hhom(\dif,t)(\Vman) \subset\BNbh=\Uman$ for all $t\in[0;1]$, and thus $\Hhom(\dif,t)(\tau\px)\in\Uman$ as well.
For $t=1$, this implies that $\dif(\tau\px)=\Hhom(\dif,1)(\tau\px) =\Ghom(\dif,1)(\tau\px)\in\Uman$.
Also for other $t$, we get that $\tfrac{1}{\tau}(\dif(\tau\px)) = \Hhom(\dif,t)(\tau\px) = \Ghom(\dif,t)(\px) \in \Uman$ as well.

Since $\dif$ is \FLP, the points $\tau\px$ and $\dif(\tau\px)$ belong to the same leaf of $\Foliation$.
Moreover, as $\Uman$ is $\Foliation$-homogeneous, their respective images under multiplication by $\tfrac{1}{\tau}$, that is $\px:=\tfrac{1}{\tau}\tau\px$ and $\Ghom(\dif,t)(\px)=\tfrac{1}{\tau}\dif(\tau\px)$, must also belong to the same leaf of $\Foliation$.
In other words, $\Ghom(\dif,t)(\px)\in\omega$.

\item\label{enum:chk:1:G0}
It remains to show that $\Ghom(\px,0)\in\omega$ if $\px\in\Vman$.
Due to~\ref{enum:chk:1:M_V} and~\ref{enum:chk:1:V}, $\Ghom(\dif,t)(\px) \in \Uman\cap \omega$ for $t\in(0;1]$.
Suppose $\Ghom(\px,0) \not\in\omega$.
Then by continuity of $\Ghom$, $\Ghom(\px,0) \in \Uman\cap(\overline{\omega}\setminus\omega)\subset \ESingMan$.
But $\Ghom_{0}$ is a diffeomorphism of $\Mman$ which coincide with $\dif$ on $\ESingMan$, and therefore it yields a self-bijection of $\ESingMan$.
Hence $\px\in\ESingMan$, and thus by~\ref{enum:chk:1:M_V}, $\Ghom(\px,0) = \dif(\px)\in \omega$.
\qedhere
\end{enumerate}
\end{proof}

\begin{subremark}\rm
Theorem~\ref{th:linearization:fol} implies that for each $\dif\in\DiffLPInvFolS$ the corresponding vector bundle morphism $\tfib{\dif}:\ETotMan\to\ETotMan$ also preserves leaves of $\Foliation$ near $\ESingMan$.
This is close to~\cite[Lemma~36]{Maksymenko:TA:2003} claiming that if $\dif:(\bR^{n},0)\to(\bR^{n},0)$ is a $\Cr{1}$ diffeomorphism, and $\func:\bR^{n}\to\bR$ a continuous homogeneous function such that $\func\circ\dif=\func$, i.e.\ $\dif$ preserves level sets of $\func$, then $\func\circ \tang[0]{\dif}=\func$ as well.
\end{subremark}

\begin{subremark}\rm
As in the non-foliated case we have a homomorphism $\rtr:\DiffLPInvLFolS\to\AutE$ associating to each $\dif\in\DiffLPInvLFolS$ the unique vector bundle morphism $\rtr(\dif):\ETotMan\to\ETotMan$ such that $\dif=\rtr(\dif)$ near $\ESingMan$.
In particular, $\rtr(\dif)$ preserves leaves of $\Foliation$ ``near $\ESingMan$'', in the sense that $\dif$ does so.
However, the description of the image of that map and the question whether $\rtr:\DiffLPInvLFolS\to\rtr(\DiffLPInvLFolS)$ is a locally trivial fibration, are essentially harder than in the non-foliated case and essentially depend on the structure of $\Foliation$.
\end{subremark}

We will now consider the case when the map $\rtr$ is indeed a locally trivial fibration over its image.

Let $\Epr:\ETotMan\to\ESingMan$ be a vector bundle over a compact manifold $\ESingMan$, so $\AutE$ can be regarded as a subgroup of $\DiffInv{\ETotMan}{\ESingMan}$, and $\AutES$ as a subgroup of $\DiffFix{\ETotMan}{\ESingMan}$.
\begin{subtheorem}\label{th:qDiffLPInvLFolS}
Let $\func:\ETotMan \to\bR$ be an \AH\ function, and $\Foliation$ be the $(\func,\ESingMan)$-partition of $\ETotMan$.
Then the homomorphism $\rtr:\DiffLPInvLFolS\to\AutE$ is a retraction onto its image, $\ker(\rtr) = \DiffLPNbFolS$,
\begin{align}
\label{equ:imq:fixB} \rtr(\DiffLPFixLFolS) &= \DiffLPFixLFolS \cap \AutES = \DiffLPFixFolS \cap \AutES, \\
\label{equ:imq:invB} \rtr(\DiffLPInvLFolS) &= \DiffLPInvLFolS \cap  \AutE = \DiffLPInvFolS \cap  \AutE.
\end{align}
Whence we have the following homotopy equivalences ($\simeq$) and homeomorphisms ($\cong$):
\begin{align}
\label{equ:homeq:fixB}  \DiffLPFixFolS \simeq \DiffLPFixLFolS & \cong \DiffLPNbFolS \times \rtr(\DiffLPFixLFolS), \\
\label{equ:homeq:invB}  \DiffLPInvFolS \simeq \DiffLPInvLFolS & \cong \DiffLPNbFolS \times \rtr(\DiffLPInvLFolS)
\end{align}
with respect to $\Wr{\infty}$ and $\Sr{\infty}$ topologies.
\end{subtheorem}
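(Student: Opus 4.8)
The plan is to establish first the purely algebraic assertions about $\rtr$ together with the identities~\eqref{equ:imq:fixB}--\eqref{equ:imq:invB}, then to read off the homeomorphisms in~\eqref{equ:homeq:fixB}--\eqref{equ:homeq:invB} from Lemma~\ref{lm:principal_fibrations}\ref{enum:pf:6}, and finally to obtain the homotopy equivalences from the foliated linearization Theorem~\ref{th:linearization:fol}. I would record at the outset three elementary remarks. (1) For any vector bundle morphism $\gdif$ one has $\tfib{\gdif}=\gdif$, since by~\eqref{equ:Tfibh:formula} and fiberwise linearity $\tfib{\gdif}(\pv)=\lim_{t\to0}\tfrac1t\gdif(t\pv)=\gdif(\pv)$; in particular $\rtr$ restricts to the identity on $\AutE\cap\DiffLPInvLFolS$. (2) If $\dif\in\DiffLPInvLFolS$ agrees near $\ESingMan$ with a vector bundle isomorphism $\hat{\dif}$, then $\tfib{\dif}=\hat{\dif}$, because for fixed $\pv$ the point $t\pv$ lies in that neighborhood for all small $t>0$, whence $\tfrac1t\dif(t\pv)=\tfrac1t\hat{\dif}(t\pv)=\hat{\dif}(\pv)$; thus $\rtr(\dif)=\hat{\dif}\in\AutE$, and $\rtr(\dif)=\id_{\ETotMan}$ exactly when $\dif=\id$ near $\ESingMan$, so that $\ker(\rtr)=\DiffLP(\Foliation)\cap\DiffNbMS=\DiffLPNbFolS$. (3) A vector bundle automorphism coincides near $\ESingMan$ with itself, hence $\AutE\subset\DiffInvLMS\subset\DiffInvMS$ and $\AutES\subset\DiffFixLMS\subset\DiffFixMS$.

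The hard part is to verify that the linearization $\hat{\dif}=\rtr(\dif)$ of an arbitrary $\dif\in\DiffLPInvLFolS$ is \FLP\ on the whole of $\ETotMan$, and not merely near $\ESingMan$; this is precisely what is needed in order to know that $\rtr$ takes values in $\DiffLPInvLFolS$ and not only in $\DiffInvLMS$. Here I would argue as follows: fix $\px\in\ETotMan$ and let $\omega$ be the leaf through $\px$; for a small $\tau>0$ the point $\tau\px$ lies in the neighborhood of $\ESingMan$ on which $\dif=\hat{\dif}$, so, $\dif$ being \FLP, the points $\tau\px$ and $\hat{\dif}(\tau\px)=\tau\hat{\dif}(\px)$ lie in the same leaf; by Lemma~\ref{lm:vbhomog}\ref{enum:vbhomog:homog} the whole bundle $\ETotMan$ is $\Foliation$-homogeneous, so applying the defining property of $\Foliation$-homogeneity to the pair $\tau\px,\tau\hat{\dif}(\px)$ with the scalar $1/\tau$ shows that $\px$ and $\hat{\dif}(\px)$ lie in the same leaf, i.e.\ $\hat{\dif}(\omega)\subset\omega$. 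Together with remarks (1)--(3) this shows $\rtr$ is a retraction of $\DiffLPInvLFolS$ onto the subgroup $\DiffLPInvLFolS\cap\AutE$ and gives $\rtr(\DiffLPInvLFolS)=\DiffLP(\Foliation)\cap\AutE=\DiffLPInvLFolS\cap\AutE=\DiffLPInvFolS\cap\AutE$, which is~\eqref{equ:imq:invB}; restricting the same argument to diffeomorphisms fixed on $\ESingMan$, and using that a vector bundle isomorphism fixed on $\ESingMan$ belongs to $\AutES$ by definition, yields~\eqref{equ:imq:fixB}.

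It then remains to assemble the standard tools. Since $\rtr:\DiffLPInvLFolS\to\rtr(\DiffLPInvLFolS)$ is a retraction onto a subgroup which is $\Wr{\rr,\rr}$- and $\Sr{\rr,\rr}$-continuous for every $\rr$, with kernel $\DiffLPNbFolS$, Lemma~\ref{lm:principal_fibrations}\ref{enum:pf:6} gives the homeomorphisms $\DiffLPInvLFolS\cong\DiffLPNbFolS\times\rtr(\DiffLPInvLFolS)$ and, applied to $\rtr|_{\DiffLPFixLFolS}$, $\DiffLPFixLFolS\cong\DiffLPNbFolS\times\rtr(\DiffLPFixLFolS)$. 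The homotopy equivalences $\DiffLPFixFolS\simeq\DiffLPFixLFolS$ and $\DiffLPInvFolS\simeq\DiffLPInvLFolS$ with respect to $\Wr{\infty}$ and $\Sr{\infty}$ follow from Theorem~\ref{th:linearization:fol}, whose hypotheses hold here: $\ESingMan$ is $\Foliation$-saturated because the $(\func,\ESingMan)$-partition consists of path components of $\ESingMan$ and of the sets $\func^{-1}(c)\setminus\ESingMan$, while the closed disk bundle $\Rman_{1}=\{\px\in\ETotMan\mid\nrm{\px}\leq1\}$ is a compact neighborhood of $\ESingMan$ (as $\ESingMan$ is compact) which is $\Foliation$-homogeneous by Lemma~\ref{lm:vbhomog}\ref{enum:vbhomog:homog} and satisfies $\Rman_{1}\cap(\overline{\omega}\setminus\omega)\subset\ESingMan$ for every $\omega\in\Foliation$ by Lemma~\ref{lm:vbhomog}\ref{enum:vbhomog:omega_lim_pt}. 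Combining these yields~\eqref{equ:homeq:fixB}--\eqref{equ:homeq:invB}. Apart from the leaf-preservation step of the second paragraph --- propagating leaf preservation of the linearization outward from $\ESingMan$ over the whole bundle by fiberwise rescaling and global $\Foliation$-homogeneity --- everything here is bookkeeping with the inclusions in remark (3) and the already established results.
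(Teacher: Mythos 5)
Your proof is correct and follows essentially the same route as the paper: both establish the key leaf-preservation of $\rtr(\dif)$ by the same fiberwise-rescaling argument (for small $\tau>0$ one has $\dif(\tau\pv)=\tau\rtr(\dif)(\pv)$ near $\ESingMan$, then push outward using global $\Foliation$-homogeneity from Lemma~\ref{lm:vbhomog}), and both then invoke Lemma~\ref{lm:principal_fibrations}\ref{enum:pf:6} for the homeomorphisms and Theorem~\ref{th:linearization:fol} for the homotopy equivalences. The only difference is that you spell out the verification of the hypotheses of Theorem~\ref{th:linearization:fol} and the elementary algebraic remarks a bit more explicitly than the paper does.
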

\begin{proof}
\begin{enumerate}[wide]
\item
The following statements are evident:
\begin{gather*}
    \ker(\rtr) = \DiffLPNbFolS, \\
    \DiffLPInvLFolS \cap  \AutE = \DiffLPInvFolS \cap  \AutE, \\
    \DiffLPFixLFolS \cap \AutES = \DiffLPFixFolS \cap \AutES.
\end{gather*}

Moreover, \eqref{equ:imq:fixB} follows from~\eqref{equ:imq:invB} and the observation that $\dif = \rtr(\dif)$ on $\ESingMan$, so $\dif$ is fixed on $\ESingMan$ iff $\rtr(\dif)$ is fixed on $\ESingMan$.

Also, by Lemma~\ref{lm:vbhomog}, $\Foliation$ satisfies assumptions of Theorem~\ref{th:linearization:fol}, whence we get the homotopy equivalences in~\eqref{equ:homeq:fixB} and~\eqref{equ:homeq:invB}.

\item
Let us show that $\rtr(\DiffLPInvLFolS) \subset \DiffLPInvFolS \cap \AutE$.
Let $\dif\in\DiffLPInvLFolS$ and $\dif' = \rtr(\dif) \in \AutE$.
We should show the vector bundle morphism $\dif'$ is also \FLP, i.e.\ for each $\pv\in\ETotMan$, its image $\dif'(\pv)$ belong to the same leaf as $\pv$.

Indeed, since $\dif=\dif'$ near $\ESingMan$, there exists small $\tau>0$ such that $\dif(\tau\pv) = \dif'(\tau\pv) = \tau\dif'(\pv)$, where the latter equality holds since $\dif'$ is a vector bundle morphism (linear of fibres).
As $\dif$ is \FLP, $\tau\pv$ and $\tau\dif'(\pv)=\dif(\tau\pv)$ belong to the same leaf of $\Foliation$.
Hence by $\Foliation$-homogeneity of $\Uman=\ETotMan$, $\pv$ and $\dif'(\pv)$ also belong to the same leaf of $\Foliation$.

\item
Now let $\dif\in\DiffLPInvFolS \cap \AutE$.
Then $\rtr(\dif)$ and $\dif$ are two vector bundle isomorphisms which coincide near $\ESingMan$.
Then they should coincide on all of $\ETotMan$.
In other words, $\rtr(\dif)=\dif$, and thus $q$ is a retraction of $\DiffLPInvLFolS$ onto its subgroup $\DiffLPInvFolS \cap \AutE$, which proves~\eqref{equ:imq:invB}.
Together with Lemma~\ref{lm:principal_fibrations} this also implies the homeomorphism in~\eqref{equ:homeq:invB}.
\qedhere
\end{enumerate}
\end{proof}

Let $\gfunc:\bR^{\erdim}\to\bR$ be an \AH\ function with respect to the vector bundle $\bR^{\erdim}\to0$ over a point, so $\gfunc$ is homogeneous of some (possibly fractional) degree $k>0$, and the path components of its level sets $\gfunc^{-1}(c)$, $c\not=0$, are closed.
Let $\GFoliation$ be the $(\gfunc,0)$-partition of $\bR^{\erdim}$.
Then Theorem~\ref{th:qDiffLPInvLFolS} holds for $\GFoliation$.
It will be convenient to denote the image of the retraction $\rtr$ by
\[
    \LinLP{\gfunc} := \rtr(\DiffLPInvLGFolS) = \DiffLPInvGFolS \cap \VBAut{\bR^{\erdim}}.
\]
Then $\LinLP{\func}$ is closed in $\DiffLPInvLGFolS$, as a retract of the Hausdorff space $\DiffLPInvLGFolS$, whence it is also closed in $\VBAut{\bR^{\erdim}}$.
In particular, $\LinLP{\func}$ is a Lie subgroup of $\GL(\bR^{\erdim})$.
It consists of linear automorphisms of $\bR^{\erdim}$ leaving invariant the leaves of $\GFoliation$.

Let also
\[
    \Lin(\gfunc) = \{ A \in \GL(\bR^{\erdim}) \mid \gfunc(A\pv)=\gfunc(\pv), \pv\in\bR^{\erdim} \}
\]
be the group of all linear automorphisms preserving $\gfunc$.
Then $\LinLP{\gfunc} \subset \Lin(\gfunc)$.

\begin{subcorollary}\label{cor:qDiffLPInvLFolS_triv}
Let $\gfunc:\bR^{\erdim}\to\bR$ be an \AH\ function with respect to the vector bundle $\bR^{\erdim}\to0$ over a point.
Let also $\Epr:\ETotMan = \ESingMan\times\bR^{\erdim}\to\ESingMan$ be a trivial vector bundle over a compact manifold, and $\func:\ESingMan\times\bR^{\erdim}\to\bR$ the function given by $\func(\px,\pv)=\gfunc(\pv)$.
Then $\func$ is \AH, so Theorem~\ref{th:qDiffLPInvLFolS} holds for the $(\func,\ESingMan)$-partition $\Foliation$.
Moreover, in this case there is a \myemph{homeomorphism}:
\begin{align}
    \label{equ:imgq_triv}
    \sigma: \rtr(\DiffLPInvLFolS) \cong \Ci{\ESingMan}{\LinLP{\gfunc}} \times \Diff(\ESingMan),
\end{align}
such that $\sigma \bigl( \rtr(\DiffLPFixLFolS) \bigr) = \Ci{\ESingMan}{\LinLP{\gfunc}} \times \id_{\ESingMan}$.
Hence we get the following homotopy equivalences ($\simeq$) and homeomorphisms ($\cong$):
\begin{align}
\label{equ:homeq:triv:fixB}  \DiffLPFixFolS \simeq \DiffLPFixLFolS & \cong \DiffLPNbFolS \times \Ci{\ESingMan}{\LinLP{\gfunc}}, \\
\label{equ:homeq:triv:invB}  \DiffLPInvFolS \simeq \DiffLPInvLFolS & \cong \DiffLPNbFolS \times \Ci{\ESingMan}{\LinLP{\gfunc}} \times \Diff(\ESingMan) \\
& \cong \DiffLPFixLFolS \times \Diff(\ESingMan) \nonumber
\end{align}
with respect to $\Wr{\infty}$ and $\Sr{\infty}$ topologies.
\end{subcorollary}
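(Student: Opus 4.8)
The plan is to obtain the statement directly from Theorem~\ref{th:qDiffLPInvLFolS} applied to the $(\func,\ESingMan)$-partition $\Foliation$, and then to describe the group $\rtr(\DiffLPInvLFolS)$ explicitly by exploiting the product structure $\ETotMan=\ESingMan\times\bR^{\erdim}$.

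First I would check that $\func$ is \AH. Since $\tau(\px,\pv)=(\px,\tau\pv)$ in a trivial bundle, homogeneity of $\gfunc$ of degree $k>0$ gives $\func(\tau(\px,\pv))=\gfunc(\tau\pv)=\tau^{k}\func(\px,\pv)$, so $\func$ is fiberwise homogeneous of degree $k$. Moreover $\func^{-1}(c)=\ESingMan\times\gfunc^{-1}(c)$ for every $c\in\bR$, and the path components of a product space are the products of the path components of the factors; hence every path component of $\func^{-1}(c)$ has the form $N\times\omega$ with $N$ a connected component of $\ESingMan$ and $\omega$ a path component of $\gfunc^{-1}(c)$. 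For $c\in\{a,b\}$ such $\omega$ is closed in $\bR^{\erdim}$ by the assumption on $\gfunc$, and $N$ is closed in the manifold $\ESingMan$, so $N\times\omega$ is closed in $\ETotMan$; thus $\func$ is \AH. Therefore Theorem~\ref{th:qDiffLPInvLFolS} applies to $\Foliation$ and yields at once that $\rtr:\DiffLPInvLFolS\to\AutE$ is a retraction onto its image, $\ker(\rtr)=\DiffLPNbFolS$, the equalities~\eqref{equ:imq:fixB} and~\eqref{equ:imq:invB}, and the homotopy equivalences and homeomorphisms~\eqref{equ:homeq:fixB} and~\eqref{equ:homeq:invB}.

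Next I would identify $\rtr(\DiffLPInvLFolS)$. Arguing as in the proof of Lemma~\ref{lm:sectAutE}, every $\gdif\in\AutE$ has the form $\gdif(\px,\pv)=(\psi_{\gdif}(\px),A_{\gdif}(\px)\pv)$ for a unique pair $(A_{\gdif},\psi_{\gdif})\in\Ci{\ESingMan}{\GL(\bR^{\erdim})}\times\Diff(\ESingMan)$, and $\gdif\mapsto(A_{\gdif},\psi_{\gdif})$ is a bijection $\AutE\to\Ci{\ESingMan}{\GL(\bR^{\erdim})}\times\Diff(\ESingMan)$ which is a $\Wr{\rr,\rr}$- as well as $\Sr{\rr,\rr}$-homeomorphism for all $\rr\in\{0,1,\ldots,\infty\}$: because $\gdif$ is linear along the fibers and $\ESingMan$ is compact, every seminorm of $\gdif$ along $\bR^{\erdim}$ on a compact set is bounded in terms of finitely many $\Cinfty$-norms of $\psi_{\gdif}$ and $A_{\gdif}$ over $\ESingMan$, which keeps the weak and strong topologies in step even though $\ETotMan$ is non-compact. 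By~\eqref{equ:imq:invB}, $\rtr(\DiffLPInvLFolS)=\DiffLPInvFolS\cap\AutE$ consists of those $\gdif\in\AutE$ preserving every leaf of $\Foliation$. As in the first paragraph, the leaves of $\Foliation$ lying in $\ETotMan\setminus\ESingMan$ are exactly the products $\ESingMan\times\omega$ with $\omega$ a leaf of $\GFoliation$ other than $\{0\}$, while $\ESingMan$ is a single leaf (for disconnected $\ESingMan$ one replaces $\ESingMan$ by its components and $\Diff(\ESingMan)$ by the subgroup preserving each of them). Since $\gdif$ carries $\ESingMan\times\omega$ into $\ESingMan\times\bigl(\bigcup_{\px}A_{\gdif}(\px)\,\omega\bigr)$, leaf preservation of $\gdif$ amounts to $A_{\gdif}(\px)(\omega)\subset\omega$ for all $\px\in\ESingMan$ and all such $\omega$; as $A_{\gdif}(\px)$ is invertible and fixes the origin leaf, this is the same as $A_{\gdif}(\px)\in\LinLP{\gfunc}$ for every $\px$, i.e.\ $A_{\gdif}\in\Ci{\ESingMan}{\LinLP{\gfunc}}$ (here one uses that $\LinLP{\gfunc}$ is closed in $\GL(\bR^{\erdim})$), with no further restriction on $\psi_{\gdif}$. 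Hence the bijection above restricts to the homeomorphism $\sigma:\rtr(\DiffLPInvLFolS)\cong\Ci{\ESingMan}{\LinLP{\gfunc}}\times\Diff(\ESingMan)$ of~\eqref{equ:imgq_triv}, in all $\Wr{\rr,\rr}$ and $\Sr{\rr,\rr}$ topologies; and by~\eqref{equ:imq:fixB} the subgroup $\rtr(\DiffLPFixLFolS)$ is cut out by $\gdif\in\AutES$, i.e.\ $\psi_{\gdif}=\id_{\ESingMan}$, so $\sigma$ sends it onto $\Ci{\ESingMan}{\LinLP{\gfunc}}\times\id_{\ESingMan}$.

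Finally, substituting $\rtr(\DiffLPFixLFolS)\cong\Ci{\ESingMan}{\LinLP{\gfunc}}$ and $\rtr(\DiffLPInvLFolS)\cong\Ci{\ESingMan}{\LinLP{\gfunc}}\times\Diff(\ESingMan)$ into~\eqref{equ:homeq:fixB} and~\eqref{equ:homeq:invB} produces the homotopy equivalences and the first homeomorphisms of~\eqref{equ:homeq:triv:fixB} and~\eqref{equ:homeq:triv:invB}, while the last homeomorphism of~\eqref{equ:homeq:triv:invB} follows by comparing the resulting expression $\DiffLPNbFolS\times\Ci{\ESingMan}{\LinLP{\gfunc}}\times\Diff(\ESingMan)$ for $\DiffLPInvLFolS$ with the expression $\DiffLPNbFolS\times\Ci{\ESingMan}{\LinLP{\gfunc}}$ for $\DiffLPFixLFolS$. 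I expect the only nonroutine part to be the third paragraph: checking that leaf preservation of a fiberwise-linear automorphism over a possibly nontrivial diffeomorphism of the compact base $\ESingMan$ decouples into the pointwise condition $A_{\gdif}(\px)\in\LinLP{\gfunc}$, and that the resulting $\sigma$ stays a homeomorphism for the \emph{strong} Whitney topologies despite $\ETotMan$ being non-compact; everything else is bookkeeping built on the already established Theorems~\ref{th:linearization:fol} and~\ref{th:qDiffLPInvLFolS}.
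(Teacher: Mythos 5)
Your proposal is correct and follows essentially the same route as the paper: verify that $\func$ is admissible homogeneous, invoke Theorem~\ref{th:qDiffLPInvLFolS}, factor $\AutE$ through the trivialization as $\Ci{\ESingMan}{\GL(\bR^{\erdim})}\times\Diff(\ESingMan)$, and characterize membership in $\rtr(\DiffLPInvLFolS)$ by the pointwise condition $A_{\gdif}(\px)\in\LinLP{\gfunc}$. The only cosmetic difference is that the paper assembles the product decomposition of $\AutE$ via Lemma~\ref{lm:principal_fibrations} (a global section of $\hrtrho$) together with Lemma~\ref{lm:sectAutE}, whereas you write the formula $\gdif(\px,\pv)=(\psi_{\gdif}(\px),A_{\gdif}(\px)\pv)$ directly; the content is identical.

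One substantive remark: your parenthetical about disconnected $\ESingMan$ is not pedantry but a genuine caveat that the paper's proof passes over in silence. Since each component of $\ESingMan$ is a separate leaf of $\Foliation$, a fiberwise-linear $\gdif$ with $A_{\gdif}\equiv\id$ but $\psi_{\gdif}$ permuting components of $\ESingMan$ fails to be leaf preserving even though the image of $A_{\gdif}$ lies in $\LinLP{\gfunc}$; hence for disconnected $\ESingMan$ the factor $\Diff(\ESingMan)$ in~\eqref{equ:imgq_triv} must be read as the subgroup preserving each component setwise, exactly as you say. The paper's proof implicitly assumes $\ESingMan$ connected here, so your version is the more careful one.
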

\begin{proof}
1) Let us show that $\func$ is \AH.
Notice that for each $\tau>0$ we have that
\[
    \func\bigl( \tau(\px,\pv) \bigr)
        = \func(\px,\tau\pv)
        = \gfunc(\tau\pv)
        = \tau^{k}\gfunc(\pv)
        = \tau^{k}\func(\px,\pv),
\]
so $\func$ is homogeneous of the same degree as $\gfunc$.
Moreover, for each $c\not=0$ each path component $\beta$ of $\func^{-1}(c)$ is a product of some path component $A$ of $\ESingMan$ with some path component $\alpha$ of $\gfunc^{-1}(c)$.
But $\alpha$ is closed since $\gfunc$ is \AH, while $A$ is closed since $\ESingMan$ is locally path connected.
Hence $\beta=A \times\alpha$ is closed in $\ETotMan$.
Therefore $\func$ is \AH.

2) Let us construct a homeomorphism $\sigma$.
Recall that we have a natural homomorphism $\rho:\AutE \to \Diff(\ESingMan)$, $\rho(\dif) = \dif|_{\ESingMan}$, whose kernel is $\AutES$.
Since $\Epr$ is trivial, $\rho$ admits a global section $s:\Diff(\ESingMan) \to \AutE$, $s(\phi)(\px,\pv) = (\dif(\px), \pv)$, being $\Wr{\rr,\rr}$- and $\Sr{\rr,\rr}$-continuous for all $\rr\in\{0,1,\ldots,\infty\}$.
Hence by Lemma~\ref{lm:principal_fibrations} we have a homeomorphism $\AutE \cong \AutES \times \Diff(\ESingMan)$.

On the other hand, since $\Epr$ is a trivial vector bundle, there is another homeomorphism $\theta: \AutES \to \Ci{\ESingMan}{\GL(\bR^{\erdim})}$ given in the part 1) of the proof of Lemma~\ref{lm:sectAutE}.
This finally gives a homeomorphism
\[
    \sigma:\AutE \cong \Ci{\ESingMan}{\GL(\bR^{\erdim})} \times \Diff(\ESingMan).
\]
Now let $\dif\in \AutE$ and $\sigma(\dif) = (\alpha,\beta)$.
Then $\dif\in \rtr(\DiffLPInvLFolS) = \DiffLPInvFolS \cap \AutE$ iff the image of $\alpha:\ESingMan\to\GL(\bR^{\erdim})$ is contained in $\LinLP{\gfunc}$.
Moreover, in this case $\dif \in \rtr(\DiffLPFixLFolS) = \DiffLPFixFolS \cap \AutES$ iff in addition $\beta=\id_{\ESingMan}$.
\end{proof}

\subsection{Examples}
Consider several computation of the group $\LinLP{\gfunc}$ in Corollary~\ref{cor:qDiffLPInvLFolS_triv} when $\gfunc:\bR^{\erdim}\to\bR$ is a non-zero homogeneous polynomial of some degree $k$ for which $0$ is a unique critical point.
Assume also that $\ESingMan$ is a compact connected manifold.

\begin{subexample}[Case $\erdim=1$]\rm
Then $\gfunc:\bR\to\bR$ is given by $\gfunc(\pv)=C\pv^k$ for some integer $k>0$ and $C\not=0$.
Hence $(\func,\ESingMan)$-foliation on $\ESingMan\times\bR$ is $\Foliation = \{ \ESingMan\times\pv \mid \pv\in\bR\}$ and it does not depend on $k$ and $C$.
Notice that in this case $\LinLP{\gfunc}=\{1\}$, whence $\Ci{\ESingMan}{\LinLP{\gfunc}}$ consists of a unique map, and therefore
\begin{align}
    \label{equ:CiBLg_triv}
    &\DiffLPFixFolS \simeq \DiffLPNbFolS, &
    &\DiffLPInvFolS \simeq \DiffLPNbFolS \times \Diff(\ESingMan).
\end{align}
\end{subexample}

\begin{subexample}[Case $\erdim=2$]\rm
Let $\gfunc:\bR^2\to\bR$ be a homogeneous polynomial of some degree $k\geq2$.
Then
\[ \gfunc = L_1\cdots L_p Q_1\cdots Q_q \]
is a product of linear $\{L_i(\pu,\pv)=a_i\pu + b_i\pv \}_{i=1,\ldots,p}$ and irreducible over $\bR$ quadratic factors $\{Q_j(\pu,\pv)=c_j\pu^2 + d_j \pu\pv + e_j \pv^2 \}_{j=1,\ldots,q}$.
Moreover, $0\in\bR^{2}$ is a unique critical point of $\gfunc$ iff $\gfunc$ has no multiple \myemph{linear} factors.
It is shown in~\cite[Lemma~6.2]{Maksymenko:MFAT:2009} that if in addition $\deg\gfunc\geq3$, then $\Lin(\gfunc)$ is a finite dihedral group, and hence by linear change of coordinates one can assume that $\Lin(\func) \subset O(2)$.

Again consider several cases.

\begin{figure}[htpb!]
\begin{tabular}{ccccccc}
\includegraphics[height=2cm]{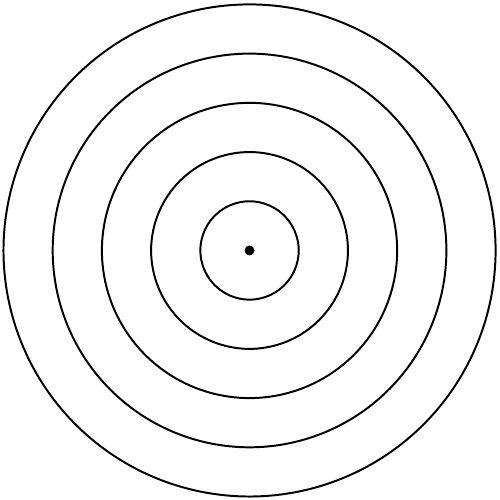} & \qquad &
\includegraphics[height=2cm]{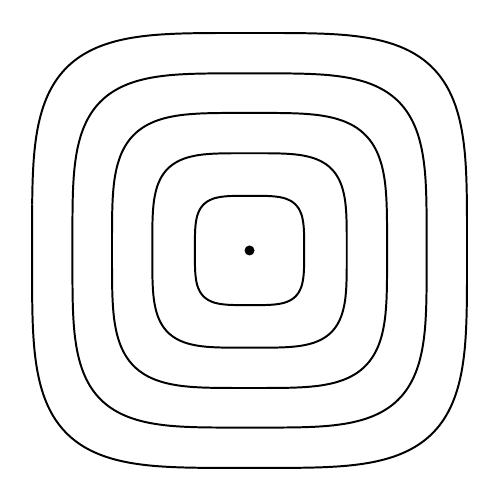} & \qquad &
\includegraphics[height=2cm]{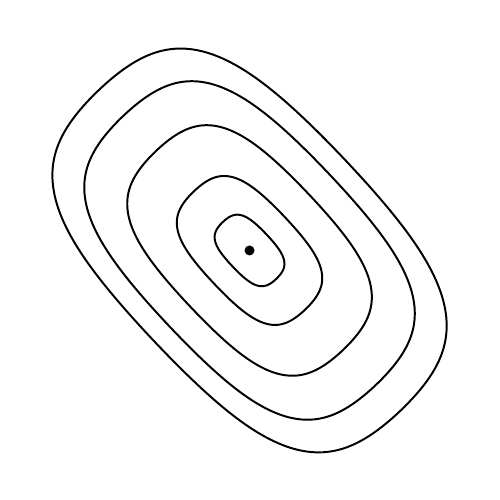} \\
(a) $\pu^2+\pv^2$    &&
(b) $\pu^4+\pv^4$    &&
(c) $(\pu^2+2\pu\pv+3\pv^2)(7\pu^2+4\pu\pv+2\pu^2)$ \\
\includegraphics[height=2cm]{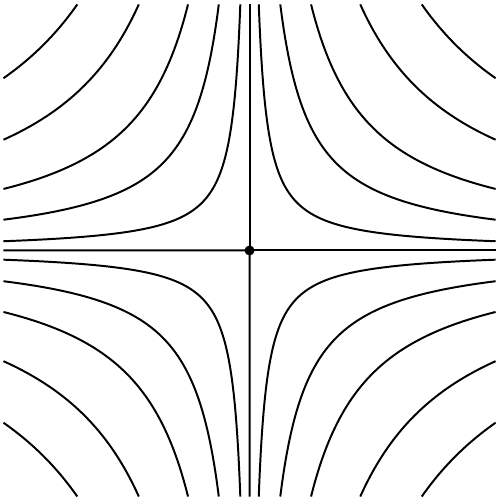} & \qquad &
\includegraphics[height=2cm]{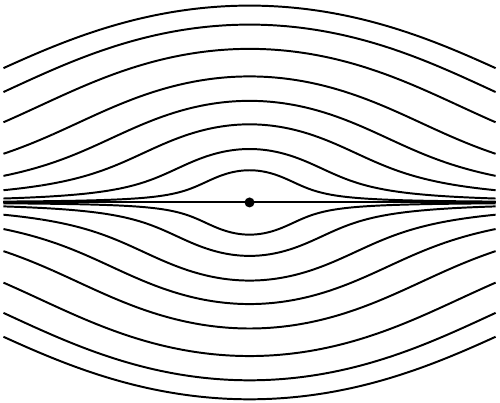} & \qquad &
\includegraphics[height=2cm]{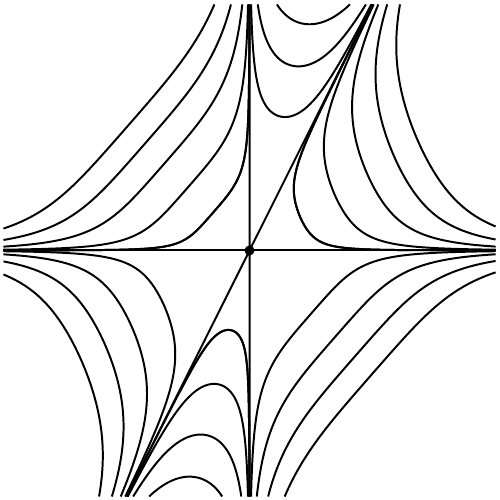}\\
(d) $\pu\pv$         &&
(e) $\pv(\pu^2+\pv^2)$ &&
(f) $\pu\pv(\py^2+\pv^2)(2\pu-\pv)$
\end{tabular}
\caption{Foliation by level sets of homogeneous polynomials}
\label{fig:gfunc:fol}
\end{figure}

\begin{enumerate}[label={\rm(2.\arabic*)}, wide]
\item
Suppose $\gfunc$ is an irreducible over $\bR$ quadratic form.
Then one can assume that $\gfunc(\px,\py) = \px^2+\py^2$, see Figure~\ref{fig:gfunc:fol}(a).
Hence $\LinLP{\gfunc} = O(2)$, so it is a disjoint union of two circles.
Since $\ESingMan$ is connected,
\[
    \Ci{\ESingMan}{\LinLP{\gfunc}} = \Ci{\ESingMan}{S^1\sqcup S^1} = \Ci{\ESingMan}{S^1} \sqcup \Ci{\ESingMan}{S^1} \cong \Ci{\ESingMan}{S^1}\times \bZ_2
\]
It is easy to see that the \myemph{path components of the space $\Ci{\ESingMan}{S^1}$ (in the topology $\Wr{\infty}$) are homotopy equivalent to the circle}.
More precisely, let $\px\in\ESingMan$ be any point, $\hat{K} := \Ci{(\ESingMan,\px)}{(\bR,0)}$ be the space of $\Cinfty$-functions on $\ESingMan$ taking zero value at $\px$.
Evidently, $\hat{K}$ is contractible and we claim that \myemph{each path component of $\Ci{\ESingMan}{S^1}$ is homeomorphic with $\hat{K}\times S^1$}.

Indeed, since $\Ci{\ESingMan}{S^1}$ is a topological group with respect to point-wise multiplication, all its path components are homeomorphic each other.
Let $C_0$ be the path component of $\Ci{\ESingMan}{S^1}$ consisting of null-homotopic maps.
Then the ``evaluation at $\px$ map'' $\delta:C_0\to S^1$, $\delta(\dif) = \dif(\px)$, is a continuous surjective homomorphism with kernel $K$ consisting of maps taking value $1\in S^1$ at $\px$.
Moreover, $\delta$ has a section $s:S^1\to C_0$, associating to each $\pu\in S^1$ the constant map $s(\pu): \ESingMan\to S^1$ into the point $\pu$.
Hence by Lemma~\ref{lm:principal_fibrations}, we have a homeomorphism $C_0 \cong K \times S^1$.
Let $p:\bR\to S^1$, $p(t)=e^{2\pi i t}$, be the universal cover of $S^1$.
Then each $\dif\in K$ (being null-homotopic) lifts to a unique continuous function $\hat{\dif}:\ESingMan\to\bR$ such that $\hat{\dif}(\px)=0$, i.e.\ $\hat{\dif}\in\hat{K}$ and one easily check that the correspondence $\dif\mapsto\hat{\dif}$ is a homeomorphism of $K$ onto $\hat{K}$.
Thus $C_0 \cong \hat{K} \times S^1$, and therefore is homotopy equivalent to $S^1$.

Further, since $S^1$ is a $K(\bZ, 1)$-space, $\pi_0\Ci{\ESingMan}{S^1}$ is isomorphic with the first integral cohomology group $H^1(\ESingMan,\bZ)$.
Hence we have the following homotopy equivalences:
\begin{align*}
    &\DiffLPFixFolS \simeq \DiffLPNbFolS \times S^1 \times \bZ_2 \times H^1(\ESingMan,\bZ) \cong
            \DiffLPNbFolS \times O(2) \times H^1(\ESingMan,\bZ), \\
    &\DiffLPInvFolS \simeq \DiffLPFixFolS \times \Diff(\ESingMan).
\end{align*}

\item
Suppose $\gfunc$ is an product of at least $2$ irreducible over $\bR$ quadratic forms, see Figure~\ref{fig:gfunc:fol}(b) and (c).
Then the origin $0\in\bR^2$ is a global extreme of $\gfunc$, and the level sets of $\gfunc$ are concentric closed curves wrapping around $0$.
In particular, they are connected, whence $\Foliation = \{ \ESingMan \times \gfunc^{-1}(c) \mid c\in \bR\}$, and $\LinLP{\gfunc} = \Lin(\gfunc)$ is a finite dihedral group.
As $\ESingMan$ is connected, $\Ci{\ESingMan}{\LinLP{\gfunc}}$ consists of constant maps only, whence we get the following homotopy equivalences:
\begin{align*}
    &\DiffLPFixFolS \simeq \DiffLPNbFolS \times \Lin(\gfunc), &
    &\DiffLPInvFolS \simeq \DiffLPFixFolS \times \Lin(\gfunc) \times \Diff(\ESingMan).
\end{align*}

\item
Suppose $\gfunc = L_1 Q_1\cdots Q_q$ has a unique linear factor, see Figure~\ref{fig:gfunc:fol}(e).
Then one can assume that $L_1(\pu,\pv)=\pv$, so the level sets of $\gfunc$ are connected and consist of $\pu$-axis and a curves ``parallel'' to it.
Therefore $\LinLP{\gfunc} = \Lin(\gfunc)$.
Moreover, every $A\in\Lin(\gfunc) \subset O(2)$ must preserve $\pu$-axis.
Hence we have the following two cases
\begin{itemize}
\item either $\Lin(\gfunc) = \left\{\amatr{1}{0}{0}{1} \right\}$ is trivial, and we have homotopy equivalences as in~\eqref{equ:CiBLg_triv};
\item or $\Lin(\gfunc) = \left\{\amatr{\pm 1}{0}{0}{1} \right\} \cong \bZ_2$, so $\gfunc$ must satisfy the identity $\gfunc(-\pu,\pv)=\gfunc(\pu,\pv)$, and then
\begin{align*}
    &\DiffLPFixFolS \simeq \DiffLPNbFolS \times \bZ_2, &
    &\DiffLPInvFolS \simeq \DiffLPNbFolS \times \Diff(\ESingMan).
\end{align*}
\end{itemize}

\item
Suppose $\gfunc$ is a product of exactly two linear factors.
Then by linear change of coordinates, one can assume that $\gfunc(\pu,\pv) = \pu\pv$, see Figure~\ref{fig:gfunc:fol}(d).
Hence $\LinLP{\gfunc} = \left\{ \amatr{t}{0}{0}{1/t} \,\Big|\, t > 0 \right\}$.
This group is isomorphic with $\bR$, and therefore contractible.
Hence so is $\Ci{\ESingMan}{\LinLP{\gfunc}}$, and therefore we have the homotopy equivalences as in~\eqref{equ:CiBLg_triv}.

\item
Finally, assume that $\gfunc$ has at least two linear factors and $\deg\gfunc\geq3$, see Figure~\ref{fig:gfunc:fol}(e).
Then $\Lin(\gfunc) \subset O(2)$, however every non-unit element $A\in\Lin(\gfunc)$ interchanges path components of level sets of $\gfunc$.
Hence $\LinLP{\gfunc}=\{E\}$ is a trivial group, whence we have homotopy equivalences as in~\eqref{equ:CiBLg_triv}.
\end{enumerate}
\end{subexample}

\begin{subexample}\label{exmp:non_smooth_foliations}\rm
Consider an example of a non-smooth foliation.
Let $\erdim=2$, and $\gfunc:\bR^2\to\bR$, $\gfunc(\pu,\pv) = |\pu| + |\pv|$, see Figure~\ref{fig:gfunc:fol:cont}(a).
Then $\gfunc$ is homogeneous of degree $1$, and its level sets are concentric squares centered at the origin.
One easily sees that $\Lin(\gfunc) \cong \bD_{4}$ is a dihedral subgroup of $O(2)$ generated by rotation by $\tfrac{\pi}{4}$ and reflection with respect to $\pu$-axis.
Hence
\begin{align*}
    &\DiffLPFixFolS \simeq \DiffLPNbFolS \times \bD_{4}, &
    &\DiffLPInvFolS \simeq \DiffLPNbFolS \times \bD_{4}\times \Diff(\ESingMan).
\end{align*}
See also Figure~\ref{fig:gfunc:fol:cont}(b) and (c) for other examples of non-smooth foliations.
\end{subexample}
\begin{figure}[htpb!]
\begin{tabular}{ccccccc}
\includegraphics[height=2cm]{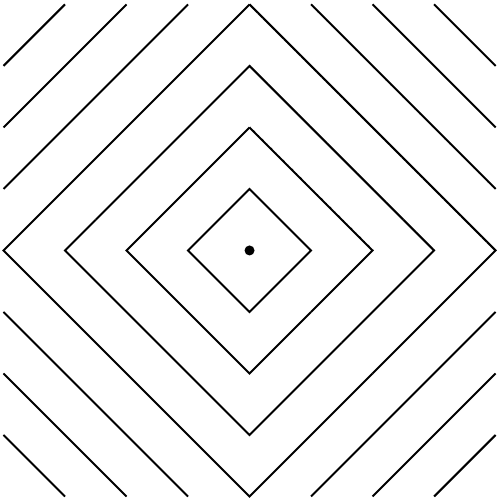}    & \qquad &
\includegraphics[height=2cm]{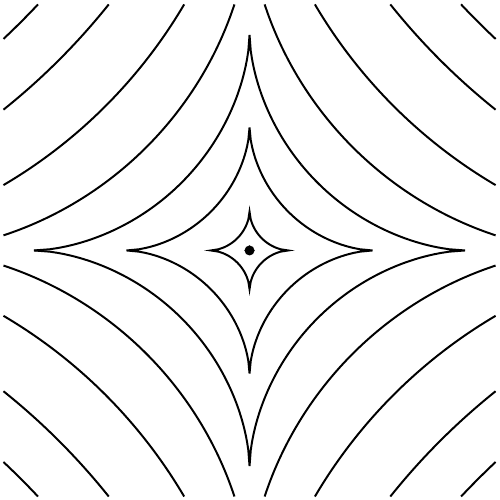} & \qquad &
\includegraphics[height=2cm]{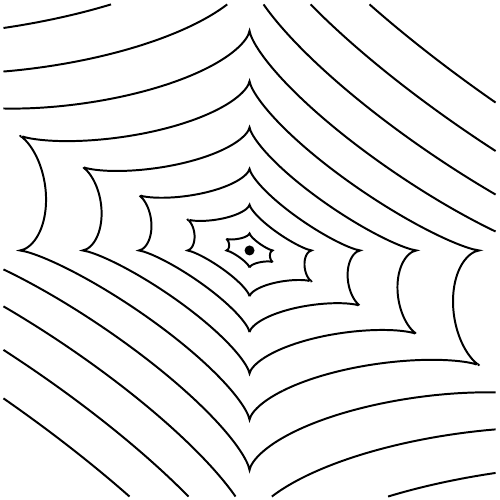} \\
(a) $|\pu|+|\pv|$ &&
(b) $|\pu|^{0.6}+|\pv|^{0.6}$ &&
(c) $|\pu|^{0.7}+|\pv|^{0.7}+|0.5\pu+\pv|^{0.7}$
\end{tabular}
\caption{Non smooth foliations level sets of homogeneous functions}
\label{fig:gfunc:fol:cont}
\end{figure}

\subsection{Functions with isolated homogeneous singularities}
Let $\Mman$ be a smooth $m$-manifold, $\Pman$ be either the real line or the circle, and $\FSP{\Mman}{\Pman}$ the subset of $\Ci{\Mman}{\Pman}$ consisting of maps $\func:\Mman\to\Pman$ having the following properties:
\begin{enumerate}[label={\rm(\roman*)}]
\item\label{enum:homog_func:const_on_bd}
$\func$ takes constant values on boundary components of $\Mman$;
\item\label{enum:homog_func:crpt_isolat}
all critical points of $\func$ are isolated and belong to $\Int{\Mman}$;
\item\label{enum:homog_func:norm_form}
for every critical point $\px$ of $\func$ there exist local charts $(\bR^{m},0)\xrightarrow{\phi_{\px}} (\Mman,\px)$ and $(\bR,0)\xrightarrow{\psi_{\px}} (\Pman,\func(\px))$, and a \myemph{homogeneous polynomial} $\gfunc_{\px}:\bR^{m}\to\bR$ such that $\gfunc_{\px} = \psi^{-1}_{\px} \circ \func \circ \phi_{\px}$ on some open neighborhood $\Uman_{\px}$ of $0$ in $\bR^{m}$.
\end{enumerate}
Condition~\ref{enum:homog_func:norm_form} can be expressed so that $\func$ is a homogeneous polynomial (with respect to some local charts) near each of its critical points.
Due to Morse Lemma, every non-degenerate critical point satisfies~\ref{enum:homog_func:norm_form}, so $\FSP{\Mman}{\Pman}$ contains all Morse functions satisfying~\ref{enum:homog_func:const_on_bd} and~\ref{enum:homog_func:crpt_isolat}.

\begin{subremark}\rm
Suppose $\Mman$ is a compact surface.
Let also $\func\in\FSP{\Mman}{\Pman}$, and $\Foliation$ be the $(\func,\ESingMan)$-partition of $\Mman$.
Then it is shown in~\cite{Maksymenko:AGAG:2006, Maksymenko:ProcIM:ENG:2010, Maksymenko:UMZ:ENG:2012} that the path components of the group $\DiffLP(\Foliation)$ of \FLP\ diffeomorphisms are either contractible or homotopy equivalent to the circle.
\end{subremark}

Let $\func\in\FSP{\Mman}{\Pman}$, $\ESingMan$ be the set of critical points of $\func$, and $\Foliation$ be the $(\func,\ESingMan)$-partition of $\Mman$, so its leaves are critical points of $\func$ and connected components of the sets $\func^{-1}(c)\setminus\ESingMan$ for all $c\in\Pman$.
Since $\ESingMan$ is finite, each $\px\in\ESingMan$ is a leaf of $\Foliation$.
In particular, each $\dif\in\DiffLP(\Foliation)$ is fixed on $\ESingMan$.

Denote by $\DiffLPFunc$ the subgroup of $\DiffLP(\Foliation)$ consisting of diffeomorphisms $\dif:\Mman\to\Mman$ having the following property: for each $\px\in\ESingMan$ there exist (depending on $\dif$)
\begin{itemize}
\item
an open neighborhood $\Uman'_{\px} \subset \Uman_{\px}$ of $0$ in $\bR^{m}$,
\item
a linear isomorphism $\hat{\dif}$ of $\bR^{m}$ belonging to $\LinLP{\gfunc_{\px}}$, i.e.\ preserving leaves of $(\gfunc_{\px}, 0)$-partition, such that
\end{itemize}
\begin{align*}
    &\dif(\Uman'_{\px})\subset \Uman_{\px}, &
    &\phi_{\px}^{-1}\circ\dif\circ\phi_{x} = \hat{\dif} \ \text{on} \ \Uman'_{\px}.
\end{align*}

\begin{subtheorem}\label{th:linearization_funcs}
Let $\func\in\FSP{\Mman}{\Pman}$, $\ESingMan$ be the set of critical points of $\func$, and $\Foliation$ be the $(\func,\ESingMan)$-partition of $\Mman$.
Then the inclusion $\DiffLPFunc \subset \DiffLP(\Foliation)$ is a $\Wr{\infty,\infty}$-homotopy equivalence.
\end{subtheorem}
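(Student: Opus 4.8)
The plan is to deduce Theorem~\ref{th:linearization_funcs} from the foliated linearization Theorem~\ref{th:linearization:fol}, applied to the \emph{finite} critical set $\ESingMan$ regarded as a compact $0$-dimensional $\Foliation$-saturated submanifold of $\Mman$. The guiding observation is that, near each critical point, $\func$ is (in the chart $\phi_\px$) the homogeneous polynomial $\gfunc_\px$, so $\Foliation$ is ``homogeneous'' near $\ESingMan$ for a suitable vector bundle structure, and $\DiffLPFunc$ is precisely the subgroup of $\DiffLP(\Foliation)$ consisting of diffeomorphisms which are fibrewise linear near $\ESingMan$ for that structure.

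\textbf{Step 1 (an adapted regular neighborhood).} First I would shrink the charts $\phi_\px\colon(\bR^m,0)\to(\Mman,\px)$ of condition~\ref{enum:homog_func:norm_form} so that the open sets $\phi_\px(\Uman_\px)$ are pairwise disjoint and each meets $\ESingMan$ only in $\px$; then, via $\phi_\px$, the restriction of $\Foliation$ to $\phi_\px(\Uman_\px)$ pulls back to the restriction of the $(\gfunc_\px,0)$-partition $\GFoliation_\px$ of $\bR^m$. Since $\px$ is an isolated critical point, $\gfunc_\px$ is non-constant, hence homogeneous of positive degree; its nonzero level sets are real algebraic (semialgebraic), so their path components coincide with their finitely many connected components and are closed, and therefore $\gfunc_\px$ is \AH. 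By Lemma~\ref{lm:vbhomog}, every neighborhood of $0$ in $\bR^m$ is $\GFoliation_\px$-homogeneous and satisfies $\overline{\omega}\setminus\omega\subset\{0\}$ for each leaf $\omega$. Next I would choose the regular neighborhood $\Epr\colon\ETotMan\to\ESingMan$ (a disjoint union of one tube per critical point) so that on a smaller neighborhood of $\ESingMan$ its fibrewise scalar multiplication agrees with the linear structure read off in the charts $\phi_\px$; this needs only a radial reparametrisation of each $\phi_\px$ that is the identity near $0$. Taking $\Uman\subset\ETotMan$ to be the union of small closed balls about the points of $\ESingMan$ inside the region where these structures coincide, we obtain a compact neighborhood of $\ESingMan$ satisfying hypotheses~\ref{enum:lin_cond:homog} and~\ref{enum:lin_cond:nbh} of Theorem~\ref{th:linearization:fol} (using the observation after Definition~\ref{def:homogeneous_foliation} that $\Foliation$-homogeneity passes to sub-neighborhoods, and Lemma~\ref{lm:vbhomog}\ref{enum:vbhomog:omega_lim_pt}).

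\textbf{Step 2 (matching the subgroups).} Every $\dif\in\DiffLP(\Foliation)$ fixes $\ESingMan$ pointwise, because each critical point is a one-point leaf; hence $\DiffLP(\Foliation)=\DiffLPInvFolS=\DiffLPFixFolS$ and $\DiffLPInvLFolS=\DiffLPFixLFolS$. I would then verify $\DiffLPInvLFolS=\DiffLPFunc$. If $\dif\in\DiffLPInvLFolS$, it agrees near $\ESingMan$ with a vector bundle isomorphism $\wh{\dif}$ of $\ETotMan$; since $\dif$ fixes $\ESingMan$ pointwise, $\wh{\dif}$ lies over $\id_{\ESingMan}$, so in each chart $\phi_\px$ it is a linear automorphism $A_\px\in\GL(\bR^m)$, and because $\dif=\phi_\px\circ A_\px\circ\phi_\px^{-1}$ near $\px$ preserves the leaves of $\Foliation$, the linear map $A_\px$ preserves the leaves of $\GFoliation_\px$ near $0$, hence everywhere by homogeneity; thus $A_\px\in\LinLP{\gfunc_\px}$ and $\dif\in\DiffLPFunc$. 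Conversely, given $\dif\in\DiffLPFunc$, the local data $A_\px\in\LinLP{\gfunc_\px}$ glue to a vector bundle isomorphism of $\ETotMan$ which coincides with $\dif$ near $\ESingMan$, so $\dif\in\DiffInvLMS$ and hence $\dif\in\DiffLPInvLFolS$.

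\textbf{Step 3 (conclusion and main obstacle).} Applying Theorem~\ref{th:linearization:fol} with this $\Uman$ then shows that the inclusion of pairs $(\DiffLPFixLFolS,\DiffLPInvLFolS)\subset(\DiffLPFixFolS,\DiffLPInvFolS)$ is a $\Wr{\infty,\infty}$-homotopy equivalence, and by Step 2 this inclusion is exactly $\DiffLPFunc\subset\DiffLP(\Foliation)$, which gives the theorem. I expect the principal difficulty to be Step 1 — arranging the regular neighborhood so that near $\ESingMan$ its vector bundle structure is \emph{literally} the one in which $\func$ is fibrewise homogeneous, and then checking~\ref{enum:lin_cond:homog}--\ref{enum:lin_cond:nbh} — together with the unravelling in Step 2 that ``coincides near $\ESingMan$ with a vector bundle isomorphism of $\ETotMan$'' means, in the normal-form charts, ``equals near each critical point a linear map in $\LinLP{\gfunc_\px}$''.
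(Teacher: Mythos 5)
Your proposal is correct and takes essentially the same route as the paper: you form the trivial vector bundle $\ETotMan=\bigcup_{\px}\phi_\px(\bR^m)$ from the normal-form charts (after shrinking to make the images pairwise disjoint), pull back the local homogeneous polynomials $\gfunc_\px$ to an \AH\ function on $\ETotMan$, apply Lemma~\ref{lm:vbhomog} to verify the hypotheses~\ref{enum:lin_cond:homog}--\ref{enum:lin_cond:nbh} of Theorem~\ref{th:linearization:fol}, and identify $\DiffLPFunc$ with $\DiffLPFixLFolS$ while noting $\DiffLP(\Foliation)=\DiffLPFixFolS$ because each critical point is a singleton leaf. The only (harmless) divergences are cosmetic: the paper closes the path components of the level sets of $\gfunc_\px$ via Lojasiewicz's triangulation theorem rather than your semialgebraic finiteness argument, and it skips your ``radial reparametrisation'' step, since the vector bundle structure on $\Efib{\px}=\phi_\px(\bR^m)$ is simply defined to be the pushforward of the linear structure of $\bR^m$ under $\phi_\px$, so scalar multiplication agrees with the chart by construction.
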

\begin{proof}
Notice that, by definition, for each $\px\in\ESingMan$ the set $\Efib{\px} := \phi_{\px}(\bR^{m})$ is an open neighborhood of $\px$, while the constant map $\Epr_{\px}: \Efib{\px} \to \px$ is regular neighborhood of $\px$.
One can decrease $\Uman_{\px}$ and assume that $\phi_{\px}(\Uman_{\px})\cap\phi_{\px'}(\Uman_{\px'})=\varnothing$ for $\px\not=\px'\in\ESingMan$.
Hence, changing $\phi_{\px}$ out of $\Uman_{\px}$ we can also assume that $\Efib{\px}\cap\Efib{\px'}=\varnothing$.

Denote $\ETotMan = \cup_{\px\in\ESingMan} \Efib{\px}$.
Then the map $\Epr:\ETotMan \to \ESingMan$, $\Epr(\Efib{\px})=\px$ is a trivial vector bundle over $\ESingMan$.
It also follows from definition, that $\DiffLPFunc$ coincides with the group $\DiffLPFixLFolS$.

Let $\Uman \subset \ETotMan$ be any compact neighborhood of $\ESingMan$.
We claim that $\Uman$ is $\Foliation$-homogeneous and $\Uman\cap(\overline{\omega}\setminus\omega) \subset \ESingMan$ for each $\omega\in\Foliation$.
Then it will follow from Theorem~\ref{th:linearization:fol}, that the inclusion $\DiffLPFunc \equiv \DiffLPFixLFolS \subset \DiffLP(\Foliation)$ is a homotopy equivalence.

Consider the following function $\gfunc: \ETotMan\to\bR$ given by $\gfunc(\pv) = \gfunc_{\px}\circ\phi_{\px}^{-1}(\pv)$ for $\pv\in\Efib{\px}$.
Then $\gfunc$ satisfies assumptions~\ref{enum:vbhomog:cond:homog} and~\ref{enum:vbhomog:cond:pathcomp} of Lemma~\ref{lm:vbhomog}.
Indeed, by definition $\gfunc$ is homogeneous on fibres, so~\ref{enum:vbhomog:cond:homog} holds.
Moreover, each level set $\gfunc_{\px}^{-1}(c)$, $c\in\bR$, admits a triangulation, e.g.~\cite[Theorem~1]{Lojasiewicz:ASNSPCS:1964}.
Hence $\gfunc_{\px}^{-1}(c)$ is locally path connected (even locally contractible), whence its path components are closed $\gfunc_{\px}^{-1}(c)$.
But the latter set is closed in $\Efib{\px}$ and hence in $\ETotMan$.
Therefore, the path components of level sets of $\gfunc$ must be closed in $\ETotMan$.

Now by Lemma~\ref{lm:vbhomog}, any neighborhood $\Uman \subset\ETotMan$ of $\ESingMan$ is $\Foliation$-homogeneous and $\Uman\cap(\overline{\omega}\setminus\omega) \subset \ESingMan$ for each $\omega\in\Foliation$.
\end{proof}
\section{Linearization of smooth maps between vector bundles}\label{sect:linearization_vb}
In Theorem~\ref{th:linearization_of_embeddings} we considered smooth self maps of a total space of some vector bundle $\Epr:\ETotMan\to\ESingMan$ which leaves invariant the zero section $\ESingMan$.
In this section we will consider smooth maps between total spaces of vector bundles which send zero section to zero section, and formulate a more general statement (Theorems~\ref{th:pres_max_rank}) about deformations of such maps to their ``derivative along fibers''.

\subsection{Maps into Euclidean spaces}
Let $\ESingMan \subset \bR^{\exdim}$ be a $\Cr{1}$ submanifold (possibly with corners) and $\aafunc=(\aafunc_1,\ldots,\aafunc_{\fxdim}):\ESingMan\to\bR^{\fxdim}$ be a $\Cr{1}$ map.
For each $\iv=1,\ldots,\exdim$ define the map
\[
    \ddd{\aafunc}{\pvi{\iv}}:\ESingMan\to\bR^{\fxdim},
    \qquad
    \ddd{\aafunc}{\pvi{\iv}} = \bigl( \ddd{\aafunc_1}{\pvi{\iv}}, \ldots,  \ddd{\aafunc_{\fxdim}}{\pvi{\iv}}\bigr),
\]
whose coordinate functions are partial derivatives of coordinate functions of $\aafunc$ in $\pvi{\iv}$.
Such a map will often be denoted by $\aafunc'_{\pvi{\iv}}$.

For a continuous path $\gamma=(\gamma_1,\ldots,\gamma_{\fxdim}):[a;b]\to\bR^{\fxdim}$ we put
\[
    \smallint\limits_{a}^{b} \gamma(s) ds :=
    \bigl(
        \smallint\limits_{a}^{b} \gamma_1(s) ds,
        \ldots,
        \smallint\limits_{a}^{b} \gamma_{\fxdim}(s) ds
    \bigr).
\]

The following variant of Hadamard lemma is easy and we leave it for the reader.
\begin{sublemma}[{e.g.\ \cite[Lemma~II.6.10]{GolubitskiGuillemin:StableMats:ENG:1973}}]\label{lm:Hadamard}
Let $\func:\ESingMan\times\bR^{\erdim}\to\bR^{\frdim}$ be a $\Cr{\rr}$ map, $1\leq\rr\leq\infty$.
Then for all $(\px,\pv)\in\ESingMan\times\bR^{\erdim}$ and $a,b\in\bR$ we have the following identity:
\begin{equation}\label{equ:fxbv_fxav}
    \func(\px,b\pv)-\func(\px,a\pv) =
    \smallint\limits_{a}^{b} \tfrac{\partial}{\partial s} \bigl( \func(\px,s\pv) \bigr) ds =
    \sum_{\iv=1}^{\erdim} \pvi{\iv} \smallint\limits_{a}^{b} \func'_{\pvi{\iv}}(\px,s\pv)ds,
\end{equation}
where we assume that the middle and right parts of this identity are zero whenever $\pv=0$.

In particular, if $\aafunc(\px,0)\equiv 0$ for all $\px\in\ESingMan$, then $\aafunc(\px,\pv) = \sum\limits_{i=1}^{\erdim} \pvi{i} \smallint\limits_{0}^{1} \aafunc'_{v_i}(\px,s\pv) ds$.
Moreover, define the following $\Cr{\rr-1}$ map $\tndif:[0;1]\times\ESingMan\times\bR^{\erdim}\to\bR^{\frdim}$ by
\begin{equation}\label{equ:tndif}
    \tndif(\tau,\px,\pv) = \sum_{i=1}^{\erdim} \pvi{i} \smallint\limits_{0}^{1} \aafunc'_{v_i}(\px,s\tau\pv) ds.
\end{equation}
Then
\begin{enumerate}[label={\rm(\alph*)}]
\item\label{enum:tnfunc:g_axtv_t}
$\aafunc(\px,\tau\pv) = \tau\tndif(\tau,\px,\pv)$, so $\aafunc(\px,\pv)=\tndif(1,\px,\pv)$ and $\tndif(\tau,\px,\pv) = \frac{\aafunc(\px,\tau\pv)}{\tau}$ for $\tau>0$;
\item\label{enum:tnfunc:g_ax0}
$\tndif(0,\px,\pv) = \sum\limits_{\iv=1}^{\erdim} \pvi{\iv} \aafunc'_{\pvi{\iv}}(\px,0)$, so for each $\px\in\ESingMan$ we have a well-defined \myemph{linear} map $\bR^{\erdim}\to\bR^{\frdim}$, $\pv\mapsto\tndif(0,\px,\pv)$;
\item\label{enum:tnfunc:dg_a0t}
for all $\ix=1,\ldots,\exdim$, $\iv=1,\ldots,\erdim$, $\px\in\ESingMan$, and $\tau\in[0;1]$
\begin{align*}
    & \tndif'_{\pxi{\ix}}(\tau,\px,0) = 0, &
    & \tndif'_{\pvi{\iv}}(\tau,\px,0) = \aafunc'_{v_i}(\px,0).
    \qedhere
\end{align*}
\end{enumerate}
\end{sublemma}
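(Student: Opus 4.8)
The plan is to deduce the whole statement from the fundamental theorem of calculus together with differentiation under the integral sign; none of the steps is difficult, and the hard part — if there is one — is only the regularity bookkeeping.

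First I would prove the identity~\eqref{equ:fxbv_fxav} by fixing $(\px,\pv)\in\ESingMan\times\bR^{\erdim}$ and considering the curve $s\mapsto\func(\px,s\pv)$ in $\bR^{\frdim}$: the chain rule gives its derivative as $\sum_{\iv=1}^{\erdim}\pvi{\iv}\func'_{\pvi{\iv}}(\px,s\pv)$, and the fundamental theorem of calculus applied to this curve on $[a;b]$ yields exactly~\eqref{equ:fxbv_fxav}; when $\pv=0$ every summand is zero and both sides equal $\func(\px,0)-\func(\px,0)=0$, so the stated convention there is consistent. Specializing to $a=0$, $b=1$ for a map $\aafunc$ with $\aafunc(\px,0)\equiv 0$ then gives the displayed integral formula $\aafunc(\px,\pv)=\sum_{i=1}^{\erdim}\pvi{i}\int_{0}^{1}\aafunc'_{v_i}(\px,s\pv)\,ds$.

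Next I would address the smoothness of $\tndif$ defined by~\eqref{equ:tndif}. For each $i$ the map $(s,\tau,\px,\pv)\mapsto\aafunc'_{v_i}(\px,s\tau\pv)$ is the composition of the $\Cr{\rr-1}$ map $\aafunc'_{v_i}$ with the polynomial (hence $\Cinfty$) map $(s,\tau,\px,\pv)\mapsto(\px,s\tau\pv)$, hence is $\Cr{\rr-1}$ jointly in all variables (with the usual convention $\infty-1=\infty$); integrating over the compact interval $s\in[0;1]$ and differentiating under the integral sign shows that $(\tau,\px,\pv)\mapsto\int_{0}^{1}\aafunc'_{v_i}(\px,s\tau\pv)\,ds$ is $\Cr{\rr-1}$, and therefore so is $\tndif$. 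For property~\ref{enum:tnfunc:g_axtv_t} I would apply the formula of the previous paragraph with $\pv$ replaced by $\tau\pv$, obtaining $\aafunc(\px,\tau\pv)=\sum_i\tau\pvi{i}\int_{0}^{1}\aafunc'_{v_i}(\px,s\tau\pv)\,ds=\tau\,\tndif(\tau,\px,\pv)$; setting $\tau=1$ and dividing by $\tau>0$ give the remaining assertions. Property~\ref{enum:tnfunc:g_ax0} follows because at $\tau=0$ the integrand no longer depends on $s$, so $\tndif(0,\px,\pv)=\sum_\iv\pvi{\iv}\aafunc'_{\pvi{\iv}}(\px,0)$, which is manifestly linear in $\pv$. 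Finally, for~\ref{enum:tnfunc:dg_a0t} I would write $\tndif(\tau,\px,\pv)=\sum_i\pvi{i}\,g_i(\tau,\px,\pv)$ with $g_i(\tau,\px,\pv)=\int_{0}^{1}\aafunc'_{v_i}(\px,s\tau\pv)\,ds$, so that $\tndif'_{\pxi{\ix}}=\sum_i\pvi{i}\,(g_i)'_{\pxi{\ix}}$ vanishes at $\pv=0$, while $\tndif'_{\pvi{\iv}}=g_\iv+\sum_i\pvi{i}\,(g_i)'_{\pvi{\iv}}$ reduces there to $g_\iv(\tau,\px,0)=\aafunc'_{v_\iv}(\px,0)$.

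The only point that requires any care is the regularity step — justifying differentiation under the integral sign on the possibly cornered manifold $\ESingMan$ and confirming that exactly one derivative is lost — but this is entirely standard, and the degeneracies at $\pv=0$ cause no trouble. Hence I expect no genuine obstacle; the lemma becomes routine once the auxiliary curve $s\mapsto\func(\px,s\pv)$ is introduced.
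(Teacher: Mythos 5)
Your proof is correct and is exactly the standard argument the paper implicitly expects: the authors state the lemma without proof, remarking only that it ``is easy and we leave it for the reader,'' so there is no independent paper proof to diverge from. Your decomposition --- fundamental theorem of calculus applied to the auxiliary curve $s\mapsto\func(\px,s\pv)$, substitution $\pv\mapsto\tau\pv$ to get (a), evaluation at $\tau=0$ for (b), and the Leibniz product rule for $\tndif=\sum_i\pvi{i}g_i$ at $\pv=0$ for (c) --- is the canonical route (essentially Lemma~II.6.10 of Golubitsky--Guillemin with a parameter $\px$), and the regularity bookkeeping via differentiation under the integral sign and the convention $\infty-1=\infty$ is handled cleanly. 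Nothing is missing.
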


\subsection{Maps between total spaces of vector bundles}
Let $\Epr:\ETotMan\to\ESingMan$ and $\Fpr:\FTotMan\to\FSingMan$ be vector bundles of ranks $\erdim$ and $\frdim$ respectively.
As above, we identify $\ESingMan$ (resp.\ $\FSingMan$) with the submanifolds of $\ETotMan$ (resp.\ $\FTotMan$) via the corresponding zero section.

Given a \STL\ neighborhood $\BNbh \subset \ETotMan$ of $\ESingMan$ define the following subsets of $\Ci{\BNbh}{\FTotMan}$.
Let
\begin{itemize}[leftmargin=*]
\item
$\CNF$ be the set of smooth maps $\dif:\BNbh\to\FTotMan$ such that $\dif(\ESingMan) \subset \FSingMan$, whence $\dif$ induces a vector bundle morphism $\tfib{\dif}:\ETotMan\to\FTotMan$ being a tangent map to $\dif$ in the direction of fibers, see~\eqref{equ:Tfibh};
\item
$\CVNF$ be the subset of $\CNF$ consisting of maps $\dif:\BNbh\to\FTotMan$ such that its tangent map $\tang{\dif}:\tang{\BNbh} \to \tang{\FTotMan}$ sends vertical vectors at $\ESingMan$ to vertical vectors at $\FSingMan$, that is $\tang{\dif}(\vvect{\ESingMan}) \subset \vvect{\FSingMan}$;
\item
$\ClNF$ be the subset of $\CNF$ consisting of maps $\dif$ which coincide with some vector bundle morphism $\gdif:\ETotMan\to\FTotMan$ \myemph{on some neighborhood of $\ESingMan$}; in fact $\gdif = \tfib{\dif}$;
\item
$\LNF$ be the space of restrictions to $\BNbh$ of vector bundle morphisms $\ETotMan\to\FTotMan$, so $\dif\equiv\tfib{\dif}|_{\BNbh}:\BNbh\to\FTotMan$ for all $\dif\in\LNF$.
\end{itemize}
Since each vector bundle morphism $\gdif:\ETotMan\to\FTotMan$ is uniquely determined by its restriction to arbitrary neighborhood of $\ESingMan$, the restriction map $\LEF \to\LNF$, $\gdif \mapsto \gdif|_{\BNbh}$, is a bijection.
It is also evident that
\begin{gather}
\label{equ:CEF}
\LEF \ \equiv \ \LNF \ \subset \ \ClNF \ \subset \ \CVNF \ \subset \ \CNF.
\end{gather}

Our aim is to prove that if $\ESingMan$ and $\BNbh$ are compact, then the inclusions~\eqref{equ:CEF} are homotopy equivalences with respect to topologies $\Wr{\infty}$, see Lemma~\ref{lm:HadamardLemma_v_bundles1}\ref{enum:g:homotopy_equivalence}.
First we introduce several notations and recall some definitions.

Let $\dif:\BNbh\to\FTotMan$ be a $\Cinfty$ map, $\py\in\BNbh$ a point, and $\CChart:\COset\times\bR^{\frdim} \to \FTotMan$ a trivialized local chart of $q$ whose image contains $\dif(\py)$.
Then there exists an open neighborhood $\Uset'\subset\BNbh$ of $\py$ contained in the image of some trivialized local chart $\BChart:\BOset\times\bR^{\erdim} \to \ETotMan$ and such that $\dif(\Uset') \subset \CChart(\COset\times\bR^{\frdim})$.
In other words, denoting $\Uset = \BChart^{-1}(\Uset')$, we get the following commutative diagram:
\[
\xymatrix{
\BOset\times\bR^{\erdim} \ar@{^(->}[d]_-{\BChart} &&
\ \Uset \ \ar[d]_-{\BChart}^{\cong} \ar@{_(->}[ll] \ar[rr]^-{(\aafunc,\bbfunc)} &&
\COset\times\bR^{\frdim} \ar@{^(->}[d]^-{\CChart} \\
\ETotMan & \BNbh \ar@{_(->}[l] & \Uset' \ar@{_(->}[l] \ar[rr]^-{\dif} && \FTotMan
}
\]
We will call the map
\begin{equation}\label{equ:h_fg_loc_repr}
\BOset\times\bR^{\erdim} \supset
    \Uset \xrightarrow{~ (\aafunc,\bbfunc) ~} \COset\times\bR^{\frdim},
\end{equation}
a \myemph{local representation of $\dif$ at $\py$} (with respect to the local trivializations $\BChart$ and $\CChart$).

Assuming that $\COset$ is an open subset of $\bR^{\fxdim}$, let also $\aafunc=(\aafunc_1,\ldots,\aafunc_{\fxdim}): \Uset\to\COset$ and $\bbfunc=(\bbfunc_1,\ldots,\bbfunc_{\frdim}): \Uset\to\bR^{\frdim}$ be the coordinate functions of the maps $\aafunc$ and $\bbfunc$ being in turn the coordinate functions of $\dif$ in the above trivialized local representation.

Let $J_{\dif} = \amatr{P}{Q}{R}{S} : \Uset \to \Mat{\fxdim+\frdim}{\exdim+\erdim}$ be the map associating to each $\pw=(\px,\pv)\in\Uset$ the Jacobi matrix of $\dif$ at $\pw$, where
\begin{align*}
    P(\pw) &= \bigl( \ddd{\aafunc_i}{\pxi{j}}(\pw) \bigr)_{i=1,\ldots,\fxdim, \ j=1,\ldots,\exdim}, &
    Q(\pw) &= \bigl( \ddd{\aafunc_i}{\pvi{j}}(\pw) \bigr)_{i=1,\ldots,\fxdim, \ j=1,\ldots,\erdim}, \\
    R(\pw) &= \bigl( \ddd{\bbfunc_i}{\pxi{j}}(\pw) \bigr)_{i=1,\ldots,\frdim, \ j=1,\ldots,\exdim}, &
    S(\pw) &= \bigl( \ddd{\bbfunc_i}{\pvi{j}}(\pw) \bigr)_{i=1,\ldots,\frdim, \ j=1,\ldots,\erdim},
\end{align*}
are the matrices consisting of the corresponding derivatives of coordinate functions of $\aafunc$ and $\bbfunc$ in $\px\in\BOset$ and $\pv\in\bR^{\erdim}$.
The following lemma directly follows from definitions and we leave it for the reader.
\begin{sublemma}\label{lm:loc_descr_CEF}
Let $\dif:\BNbh\to\FTotMan$ be a $\Cinfty$ map.
\begin{enumerate}[label={\rm(\alph*)}, leftmargin=*]
\item\label{enum:char:CEF}
The following conditions are equivalent:
\begin{enumerate}[label={\rm(\roman*)}]
\item $\dif\in\CNF$, i.e.\ $\dif(\ESingMan)\subset\FSingMan$;
\item for any local trivialization~\eqref{equ:h_fg_loc_repr}, $\bbfunc(\px,0)=0 \in \bR^{\frdim}$ for all $(\px,0)\in\Uset$.
\end{enumerate}
In the latter case $R(\px,0)=0$, i.e.\ $J_{\dif}(\px,0) = \amatr{P(\px,0)}{Q(\px,0)}{0}{S(\px,0)}$.

\item\label{enum:char:max_ranks}
For each $(\px,0)\in\ESingMan$ the ranks of submatices $P(\px,0)$ and $S(\px,0)$ do not depend on a particular local representation of $\dif$ at $(\px,0)$.

\item\label{enum:char:CVEF}
Suppose $\dif\in\CNF$.
Then the following conditions are equivalent:
\begin{enumerate}[label={\rm(\roman*)}]
\item $\dif\in\CVNF$, that is $\tang{\dif}(\vvect{\ESingMan}) \subset \vvect{\FSingMan}$;

\item for any local trivialization~\eqref{equ:h_fg_loc_repr}, $Q(\px,0)=0$, i.e.\ $\ddd{\aafunc_i}{\pvi{j}}(\px,0)=0$ for all $i,j$, whenever $(\px,0)\in\Uset$, so $J_{\dif}(\px,0) = \amatr{P(\px,0)}{0}{0}{S(\px,0)}$.
\end{enumerate}

\item\label{enum:char:ClEF}
$\dif\in\ClNF$ if and only if for any local trivialization~\eqref{equ:h_fg_loc_repr} there exists an open neighborhood $\Wman\subset\Uman$ of $\Uman\cap(\BOset\times0)$ such that $\dif(\px,\pv) = (\aafunc(\px), S(\px,0)\pv)$ for all $(\px,\pv)\in\Wman$, in particular $\aafunc|_{\Wman}$ does not depend in $\pv$;

\item\label{enum:char:LEF}
$\dif\in\LNF$ if and only if for any local trivialization~\eqref{equ:h_fg_loc_repr} and $(\px,\pv)\in\Uman$ we have that $\dif(\px,\pv) = \bigl( \aafunc(\px,\pv), S(\px,0)\pv \bigr)$.

\end{enumerate}
\end{sublemma}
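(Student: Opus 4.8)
The plan is to verify the lemma by unwinding each of the four conditions in a trivialized local representation~\eqref{equ:h_fg_loc_repr} near an arbitrary point $(\px,0)$ of the zero section; since all four conditions are intrinsic, the phrase ``for any local trivialization'' will follow at once from the translation in one such representation (and for part~\ref{enum:char:max_ranks} we additionally check directly that the ranks in question are representation-independent). First I would record that in such a representation $\ESingMan$ and $\FSingMan$ correspond to $\BOset\times 0$ and $\COset\times 0$, so $\dif(\ESingMan)\subset\FSingMan$ is equivalent to $\bbfunc(\px,0)=0$ for all admissible $\px$, which is part~\ref{enum:char:CEF}; differentiating the identity $\bbfunc(\,\cdot\,,0)\equiv 0$ in the base variables then gives $R(\px,0)=0$, hence the asserted block form of $J_{\dif}(\px,0)$.

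For part~\ref{enum:char:max_ranks} I would invoke the structure of a trivialized atlas (\S\ref{sect:vb:triv_atlas}): a change of trivialized charts, on the source or on the target, has the form $(\px,\pv)\mapsto(\tau(\px),A(\px)\pv)$ with $\tau$ a diffeomorphism between open subsets of Euclidean space and $A(\px)$ invertible, so its Jacobian at a point of the zero section $\{\pv=0\}$ is the block-diagonal matrix $\mathrm{diag}(D\tau(\px),A(\px))$ (the base component is independent of $\pv$, and the lower-left block $\bigl(DA(\px)\bigr)\pv$ vanishes at $\pv=0$). By the chain rule, passing to a different representation replaces $J_{\dif}(\px,0)$ by $C(\px,0)\,J_{\dif}(\px,0)\,B(\px,0)^{-1}$ with $B,C$ block-diagonal and invertible; expanding this block-triangular product, the $P$-block becomes $C_{11}P(\px,0)B_{11}^{-1}$ and the $S$-block becomes $C_{22}S(\px,0)B_{22}^{-1}$, so $\rank P(\px,0)$ and $\rank S(\px,0)$ are well defined. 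The same computation shows that the vanishing of $Q(\px,0)$ is also representation-independent, which is what makes part~\ref{enum:char:CVEF} well posed; part~\ref{enum:char:CVEF} itself then follows by noting that $\vvect{\ESingMan}$ at $(\px,0)$ consists of the vectors $\avect{0}{\pv}$ and that $\tang{\dif}$ sends such a vector to $J_{\dif}(\px,0)\avect{0}{\pv}=\avect{Q(\px,0)\pv}{S(\px,0)\pv}$, which lies in $\vvect{\FSingMan}$ for every $\pv$ if and only if $Q(\px,0)=0$.

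For parts~\ref{enum:char:ClEF} and~\ref{enum:char:LEF} I would use that a vector bundle morphism $\gdif:\ETotMan\to\FTotMan$ reads, in a trivialized representation, $\gdif(\px,\pv)=(\bar\gdif(\px),L(\px)\pv)$. Hence if $\dif$ agrees with such a $\gdif$ on a neighborhood $\Wman$ of $\ESingMan$ (resp.\ on the whole chart), then there $\aafunc$ does not depend on $\pv$ and $\bbfunc(\px,\pv)=L(\px)\pv$, so differentiating in $\pv$ forces $S(\px,\pv)=L(\px)$, whence $S(\px,0)=L(\px)$ and $\dif(\px,\pv)=(\aafunc(\px),S(\px,0)\pv)$. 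Conversely, if $\dif$ has this shape near $\ESingMan$ in every trivialization, then comparison with the local expression $\tfib{\dif}(\px,\pv)=(\aafunc(\px,0),S(\px,0)\pv)$ coming from~\eqref{equ:Tfibh:formula} shows $\dif=\tfib{\dif}$ on the union of these neighborhoods, which is a neighborhood of $\ESingMan$; since $\tfib{\dif}$ is a genuine vector bundle morphism of $\ETotMan$, this yields $\dif\in\ClNF$ with $\gdif=\tfib{\dif}$. The statement for $\LNF$ is identical, with ``neighborhood of $\ESingMan$'' replaced by ``all of $\BNbh$'', together with the fact that a fibrewise linear map covering a smooth base map extends uniquely to a vector bundle morphism of $\ETotMan$.

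The whole argument is bookkeeping, and the only step I expect to require a little care is part~\ref{enum:char:max_ranks}: one must check that the Jacobians of the chart changes are block-diagonal with invertible blocks along the zero section, which is exactly what guarantees the invariance of $\rank P(\px,0)$ and $\rank S(\px,0)$, and, simultaneously, of the vanishing of $Q(\px,0)$.
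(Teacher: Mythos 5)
Your proposal is correct and, for part~\ref{enum:char:max_ranks} --- the only part the paper proves explicitly --- it takes essentially the same route as the paper: one observes that a change of trivialized charts has a block-diagonal Jacobian along the zero section (the paper phrases this as the canonical splittings~\eqref{equ:TBE_VB_TB} being preserved by local trivializations), so passing to another representation multiplies $J_{\dif}(\px,0)$ on both sides by invertible block-diagonal matrices, leaving $\rank P(\px,0)$, $\rank S(\px,0)$, and the vanishing of $Q(\px,0)$ unchanged. The paper declares parts~\ref{enum:char:CEF}, \ref{enum:char:CVEF}, \ref{enum:char:ClEF}, \ref{enum:char:LEF} easy and leaves them to the reader; your unwindings of these in a local trivialization are sound and simply fill in what the paper omits. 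One small remark: in part~\ref{enum:char:LEF} the paper writes the first coordinate as $\aafunc(\px,\pv)$, but --- as your argument makes clear and as is forced by the intrinsic definition of $\LNF$ as restrictions of vector bundle morphisms --- it must be independent of $\pv$, parallel to the $\aafunc(\px)$ appearing in part~\ref{enum:char:ClEF}.
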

\begin{proof}
\ref{enum:char:max_ranks}
Recall, see~\eqref{equ:TBE_VB_TB}, that for each $\pw=(\px,0)\in\ESingMan$ we have a canonical splittings
\begin{align*}
&\tang[\pw]{\ETotMan}       = \tang[\pw]{\ESingMan}         \oplus  \tang[\pw]{\ETotMan_{\pw}}, &
&\tang[\dif(\pw)]{\FTotMan} = \tang[\dif(\pw)]{\FSingMan}  \oplus  \tang[\dif(\pw)]{\FTotMan_{\dif(\pw)}}
\end{align*}
which are preserved by local trivializations.
Hence, passing to another local presentation will change the matrix $J_{\dif}(\px,0)$ with
\[
    \amatr{L}{0}{0}{M}J_{\dif}(\px,0)\amatr{X}{0}{0}{Y} =
    \amatr{L}{0}{0}{M}\ \amatr{P}{Q}{0}{S} \ \amatr{X}{0}{0}{Y} =
    \amatr{LPX}{LQY}{0}{MSC}
\]
for some non-degenerate matrices $L, X$ having the size of $P$ and $M,Y$ having the size of $S$.
Hence $\rank(LPX)=\rank(P)$ and $\rank(MSC)=\rank(S)$.

All other statements are easy and we leave their proof for the reader.
\end{proof}

\begin{sublemma}[Hadamard lemma for vector bundles]\label{lm:HadamardLemma_v_bundles1}
For each $\dif\in\CNF$ there exists a unique $\Cinfty$ homotopy $\tndif:\BNbh\times[0;1]\to\FTotMan$ such that for every $\tau\in[0;1]$
\begin{enumerate}[label={\rm(\alph*)}]
\item\label{enum:g:htx_tgxt}
$\dif(\tau\pw) = \tau \tndif(\tau,\pw)$ for all $\pw\in\BNbh$, and, in particular, $\tndif_1 = \dif$;

\item\label{enum:g:gt_h_on_B}
$\tndif_{\tau}|_{\ESingMan} = \dif|_{\ESingMan}: \ESingMan \to \FSingMan$, and in particular, $\tndif_{\tau}\in\CNF$;

\item\label{enum:g:g0_linear}
$\tndif_0 = \tfib{\dif}|_{\BNbh}:\BNbh\to\FTotMan$;

\item\label{enum:g:Jgt}
$J_{\tndif_{\tau}}(\px,0) = \amatr{P(\px,0)}{\tau Q(\px,0)}{0}{S(\px,0)}$ for $(\px,0)\in\Uman$;

\item\label{enum:g:h_in_CVEF}
if $\dif\in\CVNF$, then $\tndif_{\tau}\in\CVNF$;

\item\label{enum:g:h_in_ClEF}
if $\dif\in\ClNF$, then $\dif = \tndif_{\tau}$ near $\ESingMan$, and in particular, $\tndif_{\tau}\in\ClNF$;

\item\label{enum:g:h_in_LEF}
if $\dif\in\LNF$, then $\tndif_{\tau}=\dif$ on all of $\BNbh$;

\item\label{enum:g:h_to_thh_continuous}
the induced map $A:\CNF \to \Ci{\BNbh\times[0;1]}{\FTotMan}$, $A(\dif) = \tndif$, is $\Wr{r+1,r}$-continuous for every $r\geq0$;
hence it is $\Wr{\infty,\infty}$-continuous by Lemma~\ref{lm:infty_continuous};

\item\label{enum:g:homotopy_equivalence}
if $\BNbh$ is compact, then the inclusions~\eqref{equ:CEF} are $\Wr{\infty,\infty}$-homotopy equivalences.
\end{enumerate}
\end{sublemma}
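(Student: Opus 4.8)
The plan is to define $\tndif$ by the chart-independent formula $\tndif(\tau,\pw)=\tfrac{1}{\tau}\dif(\tau\pw)$ for $\tau>0$, the scalar multiplication being taken in $\FTotMan$, and $\tndif(0,\pw)=\tfib{\dif}(\pw)$, and to verify smoothness across $\tau=0$ with Lemma~\ref{lm:Hadamard}. In a trivialized local representation~\eqref{equ:h_fg_loc_repr} $\dif=(\aafunc,\bbfunc)$, multiplication by $\tau$ affects only the fibre coordinate, so $\tfrac{1}{\tau}\dif(\tau(\px,\pv))=\bigl(\aafunc(\px,\tau\pv),\ \tfrac{1}{\tau}\bbfunc(\px,\tau\pv)\bigr)$; since $\bbfunc(\px,0)=0$ by Lemma~\ref{lm:loc_descr_CEF}\ref{enum:char:CEF}, Lemma~\ref{lm:Hadamard} applied to $\bbfunc$ (cf.~\eqref{equ:tndif}) identifies the second component with $\sum_{\iv}\pvi{\iv}\int_{0}^{1}\bbfunc'_{\pvi{\iv}}(\px,s\tau\pv)\,ds$, which is $\Cinfty$ in $(\tau,\px,\pv)$, while the first is obviously $\Cinfty$. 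As $\BNbh$ is \STL, every point of $\BNbh\times[0;1]$ has a neighbourhood on which this recipe applies: for $\tau>0$ via charts around $\tau\pw$ and $\dif(\tau\pw)$, and for $\tau=0$ via a trivialized chart around $\Epr(\pw)\in\ESingMan$ together with a chart around $\dif(\Epr(\pw))\in\FSingMan$. On the dense set $\{\tau>0\}$ all these local pieces coincide with the intrinsic $\tfrac{1}{\tau}\dif(\tau\pw)$, hence they agree everywhere and glue to a global $\Cinfty$ map $\tndif$; the same observation gives uniqueness, since~\ref{enum:g:htx_tgxt} forces $\tndif(\tau,\pw)=\tfrac{1}{\tau}\dif(\tau\pw)$ for $\tau\ne0$ and continuity pins down $\tau=0$.

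With this formula items~\ref{enum:g:htx_tgxt}--\ref{enum:g:h_in_LEF} are read off directly. Part~\ref{enum:g:htx_tgxt} is Lemma~\ref{lm:Hadamard}\ref{enum:tnfunc:g_axtv_t}, and $\tau=1$ gives $\tndif_{1}=\dif$. For~\ref{enum:g:gt_h_on_B}, on $\ESingMan$ one has $\pv=0$, so $\tndif_{\tau}(\px,0)=(\aafunc(\px,0),0)=\dif(\px,0)\in\FSingMan$. For~\ref{enum:g:g0_linear}, continuity together with $\tndif_{\tau}(\pw)=\tfrac{1}{\tau}\dif(\tau\pw)$ for $\tau>0$ gives $\tndif_{0}(\pw)=\lim_{\tau\to0}\tfrac{1}{\tau}\dif(\tau\pw)=\tfib{\dif}(\pw)$ by~\eqref{equ:Tfibh:formula}; in local form this reads $\tndif_{0}(\px,\pv)=\bigl(\aafunc(\px,0),\ \sum_{\iv}\pvi{\iv}\bbfunc'_{\pvi{\iv}}(\px,0)\bigr)=\bigl(\aafunc(\px,0),S(\px,0)\pv\bigr)$, consistent with Lemma~\ref{lm:loc_descr_CEF}\ref{enum:char:LEF}. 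For~\ref{enum:g:Jgt}, differentiating the local formula at $(\px,0)$ gives base-component rows $\bigl[P(\px,0)\mid\tau Q(\px,0)\bigr]$ by the chain rule and fibre-component rows $\bigl[0\mid S(\px,0)\bigr]$ by Lemma~\ref{lm:Hadamard}\ref{enum:tnfunc:dg_a0t}; then~\ref{enum:g:h_in_CVEF} follows from~\ref{enum:g:Jgt} together with the criterion $Q(\px,0)=0$ of Lemma~\ref{lm:loc_descr_CEF}\ref{enum:char:CVEF}. For~\ref{enum:g:h_in_LEF}, $\dif\in\LNF$ means $\dif(\tau\pw)=\tau\dif(\pw)$, whence $\tndif_{\tau}\equiv\dif$. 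For~\ref{enum:g:h_in_ClEF}, $\dif\in\ClNF$ means by Lemma~\ref{lm:loc_descr_CEF}\ref{enum:char:ClEF} that $\dif=\tfib{\dif}$ on some neighbourhood $\Wman$ of $\ESingMan$; choosing a \STL\ neighbourhood $\Wman'$ of $\ESingMan$ with $\tau\pw\in\Wman$ for all $\tau\in[0;1]$ and $\pw\in\Wman'$ (possible since $\zoom{}$ is continuous and fixes $\ESingMan$), one gets $\dif(\tau\pw)=\tfib{\dif}(\tau\pw)=\tau\,\tfib{\dif}(\pw)$ for $\pw\in\Wman'$, hence $\tndif_{\tau}=\tfib{\dif}=\dif$ on $\Wman'$ for every $\tau\in[0;1]$.

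For the continuity statement~\ref{enum:g:h_to_thh_continuous} I would fix $\dif_{0}\in\CNF$ and a compact $K\subset\BNbh$, cover the compact set $\{\tau\pw:\tau\in[0;1],\ \pw\in K\}$ and its $\dif_{0}$-image by finitely many trivialized charts of $p$ and of $q$, and cover $[0;1]\times K$ by finitely many boxes compatible with these charts; for $\dif$ in a suitably small $\Wr{0}$-neighbourhood of $\dif_{0}$ the same boxes still work. On each box the local formula for $\tndif$ is obtained from $\bbfunc$ by one differentiation in $\pv$, composition with the fixed $\Cinfty$ map $(s,\tau,\px,\pv)\mapsto(\px,s\tau\pv)$, multiplication by the fixed $\Cinfty$ functions $\pvi{\iv}$, and integration over $s\in[0;1]$, of which only the first operation costs a derivative. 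Hence each finite-order $\Wr{r}$-seminorm of $\tndif-\tndif_{0}$ on $K\times[0;1]$ is controlled by finitely many $\Wr{r+1}$-seminorms of $\dif-\dif_{0}$, so the induced map $A$ is $\Wr{r+1,r}$-continuous for every finite $r$, and $\Wr{\infty,\infty}$-continuous by Lemma~\ref{lm:infty_continuous}.

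Finally, for~\ref{enum:g:homotopy_equivalence}, compactness of $\BNbh$ makes the slice evaluation $\Ci{\BNbh\times[0;1]}{\FTotMan}\times[0;1]\to\Ci{\BNbh}{\FTotMan}$ be $\Wr{\infty,\infty}$-continuous, so composing it with $A$ yields a $\Wr{\infty,\infty}$-continuous deformation $\CNF\times[0;1]\to\CNF$, $(\dif,\tau)\mapsto\tndif_{\tau}$. By~\ref{enum:g:htx_tgxt} and~\ref{enum:g:g0_linear} it retracts $\CNF$ onto $\LNF$; by~\ref{enum:g:h_in_CVEF} and~\ref{enum:g:h_in_ClEF} it restricts to deformations of $\CVNF$ and of $\ClNF$ into $\LNF$; and by~\ref{enum:g:h_in_LEF} it fixes $\LNF$ pointwise. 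Therefore every inclusion in~\eqref{equ:CEF} is a homotopy equivalence. I expect item~\ref{enum:g:h_to_thh_continuous} --- the chart bookkeeping combined with the single-derivative loss of the Hadamard operator --- to be the only genuinely technical point; the rest is a direct reading of the local formula through Lemmas~\ref{lm:Hadamard} and~\ref{lm:loc_descr_CEF}.
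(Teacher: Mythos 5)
Your proof is correct and follows essentially the same route as the paper's: define $\tndif(\tau,\pw)=\tfrac1\tau\dif(\tau\pw)$, establish smoothness across $\tau=0$ by applying the Hadamard formula in trivialized local charts (formula~\eqref{equ:gt_formula} in the paper), and read off (a)--(g) from that local formula, with (h) following from the one-derivative loss of the Hadamard operator and (i) from the resulting compactly supported deformation retraction onto $\LNF$. Your explicit chart-gluing for smoothness and the star-like shrinking of $\Wman$ in item~(f) are slightly more careful than the paper's terser wording, but the substance is identical.
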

\begin{proof}
Since $\BNbh$ and $\FTotMan$ are \STL\ neighborhoods of $\ESingMan$ and $\FSingMan$ respectively, the map $\tndif$ is uniquely determined by the formula: $\tndif(\tau,\pw) = \frac{1}{\tau}\dif(\tau\pw)$ for $(\pw,\tau)\in[0;1]\times\BNbh$.

Hence we need to show that $\tndif$ extends to a $\Cinfty$ map $[0;1]\times\BNbh\to\FTotMan$.
Consider a local representation~\eqref{equ:h_fg_loc_repr} of $\dif$.
Then $\tndif(\tau,\px,\pv) = \bigl( \aafunc(\px,\tau\pv), \tfrac{1}{\tau} \bbfunc(\px,\tau\pv)\bigr)$ for $\tau>0$.
Moreover, since $\bbfunc(\px,0)=0$, we get from~\eqref{equ:tndif} that $\tndif$ can be extended to a $\Cinfty$ map of all of $[0;1]\times\BNbh$ by
\begin{equation}\label{equ:gt_formula}
    \tndif(\tau,\px,\pv) =
    \Bigl(
       \aafunc(\px, \tau\pv),
       \sum_{\iv=1}^{\erdim} \pvi{\iv} \smallint\limits_{0}^{1} \ddd{\bbfunc}{\pvi{\iv}}(\px,s\tau\pv) ds
    \Bigr).
\end{equation}

\ref{enum:g:gt_h_on_B}
Let $\pw\in\ESingMan$.
Since multiplication by scalars preserve zero sections, we have that $\tau\pw=\pw$.
Hence $\tndif(\tau,\pw) := \frac{1}{\tau}\dif(\tau\pw) = \frac{1}{\tau}\dif(\pw) = \dif(\pw)$ for $\tau>0$.
Then by continuity of $\tndif$ and $\dif$ we see that $\tndif(0,\pw) = \dif(\pw)$ as well.

\ref{enum:g:g0_linear}
Notice that $\tndif(0,\px,\pv) = \bigl(\aafunc(\px, 0), S(\px,0)\pv\bigr)$, so $\tndif_0$ is linear on fibres, and thus a vector bundle morphism.

Statement~\ref{enum:g:Jgt} follows from~\eqref{equ:gt_formula} for direct computation.
In turn, statement~\ref{enum:g:h_in_CVEF} follows from~\ref{enum:g:Jgt} and Lemma~\ref{lm:loc_descr_CEF}\ref{enum:char:CVEF}.

\ref{enum:g:h_in_ClEF}
If $\dif\in\ClNF$, so there exists a neighborhood $\Wman$ of $\Uman\cap(\COset\times0)$ such that $\dif(\px,\pv) = (\aafunc(\px), S(\px,0)\pv)$ for all $(\px,\pv)\in\Wman$.
Then for $(\px,\pv)\in\Wman$ and $\tau>0$ we have that
\[
    \tndif(\tau,\px,\pv)
        = \bigl(
            \aafunc(\px), \tfrac{1}{\tau} S(\px,0)(\tau\pv)
          \bigr)
        = \bigl(
            \aafunc(\px), S(\px,0)\pv
         \bigr)
        = \dif(\px,\pv).
\]
By continuity of $\tndif$ this also holds for $\tau=0$.

\ref{enum:g:h_in_LEF}
If $\dif\in\LNF$, then $\dif$ commutes with multiplication by scalars, whence
\[ \tndif(\tau,\pw)=\tfrac{1}{\tau}\dif(\tau\pw) = \tfrac{1}{\tau}\tau\dif(\pw)=\dif(\pw) \]
for all $\pw\in\BNbh$.
Hence this also holds for $\tau=0$.

\ref{enum:g:h_to_thh_continuous}
Formula~\eqref{equ:gt_formula} shows that $\tndif$ continuously depends on $1$-jet of $\dif$, i.e.\ on the partial derivatives of $\dif$ up to order $1$, that is $A$ is $\Wr{1,0}$-continuous.
More generally, differentiating right hand side of~\eqref{equ:gt_formula} in $\px$ and $\pv$ we see that $r$-jet of $\tndif$, $r\geq1$, also continuously depends on $(r+1)$-jet of $\dif$, i.e.\ on partial derivatives of $\dif$ up to order $\rr+1$, so $A$ is in fact $\Wr{r+1,r}$-continuous.
We leave the details for the reader.

\ref{enum:g:homotopy_equivalence}
It follows from~\ref{enum:g:gt_h_on_B} that we have a well-defined map
\[
    \Ghom:\CNF\times[0;1]\to\CNF,
    \qquad
    \Ghom(\dif,\tau)(\py) = A(\dif)(\py,\tau).
\]
Suppose $\BNbh$ is compact.
Then similarly to~\ref{enum:g:h_to_thh_continuous} one can show that $\Ghom$ is $\Wr{\rr+1,\rr}$-continuous for all $\rr\geq0$, and therefore it is $\Wr{\infty,\infty}$-continuous.

Moreover, by~\ref{enum:g:h_in_CVEF}, \ref{enum:g:h_in_ClEF}, \ref{enum:g:h_in_LEF} the spaces $\CVNF$ and $\ClNF$ invariant under $\Ghom$, $\LNF$ is fixed, and, due to~\ref{enum:g:g0_linear}, $\Ghom_1(\CNF) \subset \LNF$.
In other words, $\Ghom$ is a strong deformation retraction of $\CNF$, $\CVNF$, $\ClNF$ onto $\LNF$, and thus the inclusions~\eqref{equ:CEF} are $\Wr{\infty,\infty}$-homotopy equivalences.
\end{proof}

\begin{subremark}\rm
If $\BNbh$ or $\ESingMan$ are non-compact, then $\Ghom$ is not compactly supported, and therefore one can not guarantee that $A$ and $\Ghom$ are continuous between the corresponding strong topologies.
\end{subremark}

\subsection{Compactly supported linearizations}
Notice that the support of the homotopy $\tndif$ constructed in Lemma~\ref{lm:HadamardLemma_v_bundles1} may coincide with all of the neighborhood $\BNbh$ of $\ESingMan$.
Therefore it makes difficult to extend such homotopies to all of $\ETotMan$.
Nevertheless, we will shows that it is possible to make $G(\dif,\tau)$ to coincide with $\dif$ out of some smaller neighborhood $\UNbh \subset \BNbh$ of $\ESingMan$.
The idea is to replace $\tau$ with a certain function $\phi$ depending on $\dif$, $\pv$ and $t\in[0;1]$.
This will give another proof that the last three spaces~\eqref{equ:CEF} are homotopy equivalent.
Further, we will extend the proof to embeddings and diffeomorphisms, see Theorems~\ref{th:pres_max_rank}.

Denote
\begin{align*}
\YY   &:= \CNF\times[0;+\infty)\times[0;1], &
\YY_0 &:= \CNF\times(0;+\infty)\times[0;1].
\end{align*}
Now, similarly to~\S\ref{sect:main_tech_theorem}, let us fix
\begin{itemize}[label={--}]
\item
a $\Cinfty$ function $\mu:\bR\to[0;1]$ such that $\mu=0$ on $[0;\aConst]$ and $\mu=1$ on $[\bConst;+\infty)$;
\item
an orthogonal structure on $\ETotMan$, and let $\nrm{\cdot}:\ETotMan\to[0;+\infty)$ be the corresponding norm;
\item
the function $\phi:\YY_0\times\ETotMan\to[0;1]$ given by
\begin{gather}\label{equ:phi_full}
    \phi(\dif,\bdelta,t,\pw)=t+(1-t)\mu\bigl(\tfrac{\nrm{\pw}}{\bdelta}\bigr);
\end{gather}
\item
the map $\HH:\YY\to\CNF$ defined by
\begin{gather}\label{equ:Hht_general_formula}
\HH(\dif,\bdelta,t)(\pw) =
\begin{cases}
    \tndif_{\dif}\bigl(\phi(\dif,\bdelta,t,\pw), \pw\bigr), & \bdelta>0, \\
    \dif(\pw), & \bdelta=0,
\end{cases}
\end{gather}
where $\alpha_{\dif}:\BNbh\times[0;1]\to\FTotMan$ is the $\Cinfty$ homotopy constructed in Lemma~\ref{lm:HadamardLemma_v_bundles1};
note that for $\bdelta>0$ and $\phi(\dif,\bdelta,t,\pw)\not=0$ we have that
\begin{equation}\label{equ:Hht}
    \HH(\dif,\bdelta,t)(\pw) :=
    \tndif_{\dif}(\phi(\dif,\bdelta,t,\pw), \pw) = \frac{\dif(\phi(\dif,\bdelta,t,\pw)\pw)}{\phi(\dif,\bdelta,t,\pw)}.
\end{equation}

\item
and one more map $\HHE=\ev\circ(\HH\times\id_{\BNbh}):\YY\times\BNbh \to \FTotMan$ given by
\begin{align}\label{equ:eval_HH}
    \HHE(\dif,\bdelta,t,\pw) = \HH(\dif,\bdelta,t)(\pw),
\end{align}
which we will call the \myemph{$\HH$-evaluation map}, see Lemma~\ref{lm:eval_map}.
\end{itemize}

Let us mention that
\begin{enumerate}[label={\rm(\roman*)}]
\item\label{enum:v_leq_\aConst_delta}
if $\|\pw\|\leq \aConst\bdelta$, then $\phi(\dif,\bdelta,t,\pw)=t$, whence $\HH(\dif,\bdelta,t,\pw) = \tndif_{\dif}(t,\pw)$;

\item\label{enum:v_geq_\bConst_delta}
if $\|\pw\|\geq \bConst\bdelta$, then $\phi(\dif,\bdelta,t,\pw)=1$, whence $\HH(\dif,\bdelta,t,\pw) = \dif(\pw)$.
\end{enumerate}

\begin{sublemma}\label{lm:general_linearization}
Suppose $\BNbh$ is compact.
Then the following statements hold.
\begin{enumerate}[label={\rm(\alph*)}, leftmargin=*]
\item\label{enum:HH_cont:00}
$\HH$ is $\Wr{0,0}$-continuous, whence due to Lemma~\ref{lm:eval_map}, $\HHE$ is $\Wr{\rr}$-continuous for all $\rr\geq0$.

\item\label{enum:HH_cont:not_cont}
However, if $\ESingMan$ is not finite (i.e.\ not a compact $0$-manifold), then for any topology $\tau$ on $\CNF$ and $\rr\geq1$, it is \myemph{not continuous} as a map
\[ \HH:(\CNF,\tau)\times[0;+\infty)\times[0;1] \to (\CNF,\Wr{\rr})  \]
into $\Wr{\rr}$-topology of $\CNF$.

\item\label{enum:HH_cont:bdelta_pos}
On the other hand, the restriction of $\HH$ to $\YY_0$, i.e.\ when $\bdelta>0$, is $\Wr{\rr+1,\rr}$-continuous for all $\rr\geq0$, and therefore it is $\Wr{\infty,\infty}$-continuous.
\end{enumerate}
\end{sublemma}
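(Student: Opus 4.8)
The proof of all three parts proceeds from the explicit formula~\eqref{equ:Hht_general_formula} for $\HH$, from the $\Wr{r+1,r}$-continuity of the assignment $\dif\mapsto\tndif_\dif$ established in Lemma~\ref{lm:HadamardLemma_v_bundles1}\ref{enum:g:h_to_thh_continuous}, and from the $\Wr0$-continuity of evaluation (Lemma~\ref{lm:eval_map}). Since $\BNbh$ is compact, $\Wr r=\Sr r$ on every function space that appears, and a map $Z\to(\CNF,\Wr0)$ is continuous precisely when the adjoint map $Z\times\BNbh\to\FTotMan$ is jointly continuous; via this adjunction~\ref{enum:HH_cont:00} becomes a statement about the $\HH$-evaluation map $\HHE$ of~\eqref{equ:eval_HH}. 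The plan is to prove~\ref{enum:HH_cont:bdelta_pos} first, then~\ref{enum:HH_cont:00}, then~\ref{enum:HH_cont:not_cont}; the step I expect to be the main obstacle is the behaviour at $\bdelta=0$, where $\HH$ collapses from $\dif$ to the fibrewise linearization $\tndif_\dif(t,\cdot)$ on the shrinking tube $\Rman_{\bConst\bdelta}$, and which is also exactly where continuity breaks down in~\ref{enum:HH_cont:not_cont}.

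For~\ref{enum:HH_cont:bdelta_pos} the decisive remark is that, $\bdelta$ now being a free \emph{positive} variable, the function~\eqref{equ:phi_full} does not depend on $\dif$ and is jointly smooth on $(0;+\infty)\times[0;1]\times\ETotMan$ (the non-smoothness of the norm along $\ESingMan$ being absorbed by $\mu$ vanishing near $0$). Thus on $\YY_0$ the map $\HH(\dif,\bdelta,t)$ is the precomposition of $\tndif_\dif$ with the smooth reparametrization $\pw\mapsto(\phi(\dif,\bdelta,t,\pw),\pw)$. Now $\dif\mapsto\tndif_\dif$ is $\Wr{r+1,r}$-continuous by Lemma~\ref{lm:HadamardLemma_v_bundles1}\ref{enum:g:h_to_thh_continuous}; the pair $(\bdelta,t)$ enters only through the reparametrization, which depends $\Wr s$-continuously on $(\bdelta,t)$ for every finite $s$ (as $\phi$ is jointly smooth for $\bdelta>0$ and $\BNbh$ is compact); and precomposition is $\Wr{r,r}$-continuous over the compact source $\BNbh$. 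Composing these, $\HH|_{\YY_0}$ is $\Wr{r+1,r}$-continuous for every finite $r$, hence $\Wr{\infty,\infty}$-continuous by Lemma~\ref{lm:infty_continuous}; the values lie in $\CNF$ because $\mu(0)=0$ forces $\HH(\dif,\bdelta,t)$ to agree with $\dif$ on $\ESingMan$, cf.\ Lemma~\ref{lm:HadamardLemma_v_bundles1}\ref{enum:g:gt_h_on_B}.

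For~\ref{enum:HH_cont:00} I would, via the adjunction above, prove joint $\Wr0$-continuity of $\HHE$. On the open set $\{\phi\neq0\}$ formula~\eqref{equ:Hht} writes $\HHE(\dif,\bdelta,t,\pw)=\dif(\phi\,\pw)/\phi$ with $\phi=\phi(\dif,\bdelta,t,\pw)$, continuous there as a composite of $\ev$ with the continuous function $\phi$ and scalar multiplication. The substantive point is continuity at a point with $\bdelta=0$ (or on its boundary, where $\phi$ vanishes): there $\HHE=\dif$ on the nose, while for small $\bdelta>0$ the map $\HH(\dif,\bdelta,t)$ differs from $\dif$ only over $\Rman_{\bConst\bdelta}$, and one must control the size of that difference as $\bdelta\to0$. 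The tool is that $\tndif_\tau$ and $\dif$ coincide on $\ESingMan$ for every $\tau$ (Lemma~\ref{lm:HadamardLemma_v_bundles1}\ref{enum:g:gt_h_on_B}), so by uniform continuity of $\tndif_\dif$ on the compact $\BNbh$ the $\Wr0$-deviation of $\HH(\dif,\bdelta,t)$ from $\dif$, living entirely over the tube $\Rman_{\bConst\bdelta}$, tends to $0$; combining with the identities $\phi=t$ on $\{\|\pw\|\le\aConst\bdelta\}$ and $\phi=1$ on $\{\|\pw\|\ge\bConst\bdelta\}$ gives $\Wr{0,0}$-continuity at $\bdelta=0$, hence everywhere. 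Finally $\HHE=\ev\circ(\HH\times\id_{\BNbh})$ and $\ev$ is $\Wr r$-continuous for all $r\ge0$ (Lemma~\ref{lm:eval_map}), so $\HHE$ is $\Wr r$-continuous for all $r\ge0$.

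For~\ref{enum:HH_cont:not_cont}, assuming $\dim\ESingMan\ge1$, I would fix once and for all a single ``bad'' $\dif\in\CNF$: in a trivialized chart near some $\px_0\in\ESingMan$ take $\dif$ whose local representation~\eqref{equ:h_fg_loc_repr} has mixed block $Q(\px_0,0)\neq0$, i.e.\ $\dif\notin\CVNF$ (Lemma~\ref{lm:loc_descr_CEF}\ref{enum:char:CVEF}); such a $\dif$ exists (extend a local model by a cut-off), whereas when $\ESingMan$ is $0$-dimensional $Q$ is vacuous and no such $\dif$ exists --- which is precisely why the hypothesis is needed. The restriction of $\HH$ to $\{\dif\}\times[0;+\infty)\times[0;1]$ carries the subspace topology regardless of $\tau$, so it suffices to show that $(\bdelta,t)\mapsto\HH(\dif,\bdelta,t)$ is $\Wr1$-discontinuous at some $(0,t_0)$ with $0\le t_0<1$. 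Indeed $\HH(\dif,0,t_0)=\dif$, while for $\bdelta>0$ one has $\HH(\dif,\bdelta,t_0)=\tndif_\dif(t_0,\cdot)$ on $\Rman_{\aConst\bdelta}$, and by Lemma~\ref{lm:HadamardLemma_v_bundles1}\ref{enum:g:Jgt} the Jacobian of $\tndif_\dif(t_0,\cdot)$ at $(\px_0,0)$ has mixed block $t_0Q(\px_0,0)$, which differs from the mixed block $Q(\px_0,0)$ of the Jacobian of $\dif$. Hence, in a fixed chart around $(\px_0,0)$, the $\Wr1$-distance between $\HH(\dif,\bdelta,t_0)$ and $\dif$ stays bounded below by a positive constant independent of $\bdelta$, so $\HH(\dif,\bdelta,t_0)\not\to\dif=\HH(\dif,0,t_0)$ in $\Wr1$, and \emph{a fortiori} in $\Wr r$ for all $r\ge1$.
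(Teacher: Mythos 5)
Your proof is correct and takes essentially the same approach as the paper's, merely spelling out the steps more explicitly: for~\ref{enum:HH_cont:00} the paper simply cites property (ii) before the lemma together with Lemma~\ref{lm:HadamardLemma_v_bundles1}\ref{enum:g:gt_h_on_B}, for~\ref{enum:HH_cont:bdelta_pos} it cites the formulas for $\mu$, $\phi$ and Lemma~\ref{lm:HadamardLemma_v_bundles1}\ref{enum:g:h_to_thh_continuous}, and for~\ref{enum:HH_cont:not_cont} it produces a $\dif$ with $Q\neq0$ and compares the mixed block of the Jacobian of $\tndif_\dif(0,\cdot)$ with that of $\dif$. Your variants — proving~\ref{enum:HH_cont:00} through the adjoint $\HHE$, taking an arbitrary $t_0\in[0;1)$ instead of $t_0=0$, and factoring $\HH$ through a smooth reparametrization for~\ref{enum:HH_cont:bdelta_pos} — are cosmetic rather than substantive differences.
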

\begin{proof}
Statement~\ref{enum:HH_cont:00} directly follows from~\ref{enum:v_geq_\bConst_delta} and Lemma~\ref{lm:HadamardLemma_v_bundles1}\ref{enum:g:gt_h_on_B}, while statement~\ref{enum:HH_cont:bdelta_pos} follows from formulas for $\mu$, $\phi$, and Lemma~\ref{lm:HadamardLemma_v_bundles1}\ref{enum:g:h_to_thh_continuous}.

\ref{enum:HH_cont:not_cont}
Notice that for each $\dif\in\CNF$ and $\bdelta>0$ we have that $\HH(\dif,\bdelta,1)=\dif$ and $\HH(\dif,\bdelta,0)=\tndif_{\dif}$ near $\ESingMan$.
Now let $\pw\in\ESingMan$ and $\amatr{P}{Q}{R}{S}$ be the Jacobi matrix of $\dif$ at $\pw$.
Then, by Lemma~\ref{lm:HadamardLemma_v_bundles1}\ref{enum:g:Jgt}, $\amatr{P}{0}{R}{S}$ is the Jacobi matrix of $\tndif_{\dif}$ at $\pv$.
If $\dim(\ESingMan)\geq1$, then there always exists $\dif\in\CNF$ with $Q\not=0$.
Then, for such $\dif$, decreasing $\bdelta$ to $0$ one can not make the $1$-jet of $\tndif_{\dif}$ to be arbitrary close to the $1$-jet of $\dif$ near $\pv$.
In other words, the restriction of $\HH$ to $\{\dif\}\times[0;\infty)\times[0;1]$ is not continuous into $\Wr{1}$-topology on $\CNF$, and therefore into any other $\Wr{\rr}$-topology with $\rr\geq1$.
Hence $\HH:(\CNF,\tau)\times[0;+\infty)\times[0;1] \to (\CNF,\Wr{\rr})$ is not continuous for any topology $\tau$ on $\CNF$ and $\rr\geq1$.
\end{proof}

\begin{subcorollary}\label{cor:hom_eq_case_CNF}
Suppose $\BNbh$ is compact and for some $\eps>0$ the tubular neighborhood $\Rman_{\eps}$ of $\ESingMan$ is contained in $\Int{\BNbh}$.
Let also $\adelta:\CNF\to(0;\eps)$ be any $\Wr{\rr}$-continuous function for some $\rr\geq1$, and $H:\CNF\times[0;1]\to\CNF$ the map given by
\begin{equation}\label{equ:linearization}
    H(\dif,t)(\pw) = \HH(\dif, \adelta(\dif), t, \pw).
\end{equation}
Then the following statements hold.
\begin{enumerate}[leftmargin=*]
\item\label{th:homotopy_eq_X:H_support}
$H(\dif,t) = \dif$ on $\BNbh\setminus\Rman_{\eps}$ for all $\dif\in\CNF$ and $t\in[0;1]$;

\item\label{th:homotopy_eq_X:cont}
$H$ is a $\Wr{\infty,\infty}$-continuous.

\item\label{th:homotopy_eq_X:deform}
$H$ is a deformation of $\CNF$ into $\ClNF$ which leaves $\CVNF$ invariant, that is
\begin{align*}
&~~~~~\text{\rm(a)}~H_1=\id_{\CNF}, &
&~~~\text{\rm(c)}~H(\CVNF\!\times\![0;1])\subset\CVNF, \\
&~~~~~\text{\rm(b)}~H_0(\CNF)\subset\ClNF, &
&~~~\text{\rm(d)}~H(\ClNF\!\times\![0;1])\subset\ClNF, &
\end{align*}
In particular, the inclusions~\eqref{equ:CEF} are $\Wr{\infty,\infty}$-homotopy equivalences.
\end{enumerate}
\end{subcorollary}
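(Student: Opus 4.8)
The plan is to read everything off the explicit formulas~\eqref{equ:phi_full}--\eqref{equ:Hht} for $\phi$ and $\HH$, using the properties of the Hadamard homotopy $\tndif_{\dif}$ recorded in Lemma~\ref{lm:HadamardLemma_v_bundles1}, the continuity statements of Lemma~\ref{lm:general_linearization}, and the two pointwise observations stated just before Lemma~\ref{lm:general_linearization} (that $\phi\equiv t$ when $\nrm{\pw}\leq\aConst\bdelta$, and $\phi\equiv1$ when $\nrm{\pw}\geq\bConst\bdelta$). The only genuinely new ingredient is the inequality $\adelta<\eps$, which keeps the ``non-trivial part'' of $H$ inside $\Rman_{\eps}\subset\Int{\BNbh}$.

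For~\ref{th:homotopy_eq_X:H_support}: if $\pw\in\BNbh\setminus\Rman_{\eps}$ then, since $\bConst<1$ and $\adelta(\dif)<\eps$, one has $\nrm{\pw}>\eps>\bConst\adelta(\dif)$, so by the second observation before Lemma~\ref{lm:general_linearization} we get $\phi(\dif,\adelta(\dif),t,\pw)=1$ and hence $H(\dif,t)(\pw)=\dif(\pw)$. For~\ref{th:homotopy_eq_X:cont} I would factor $H$ as the composition of $\CNF\times[0;1]\to\YY_0$, $(\dif,t)\mapsto(\dif,\adelta(\dif),t)$, with the restriction of $\HH$ to $\YY_0$. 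The latter is $\Wr{r+1,r}$-continuous for every finite $r$ by Lemma~\ref{lm:general_linearization}\ref{enum:HH_cont:bdelta_pos}, while the former is continuous because $\adelta$ is $\Wr{\rr}$-continuous, hence $\Wr{r}$-continuous for every $r\geq\rr$. Given a desired target smoothness $s<\infty$, taking $r=\max\{s+1,\rr\}$ shows $H$ is $\Wr{r,s}$-continuous, and Lemma~\ref{lm:infty_continuous} upgrades this to $\Wr{\infty,\infty}$-continuity.

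For~\ref{th:homotopy_eq_X:deform}, all of (a)--(d) follow from the fact that on the tubular neighborhood $\Rman_{\aConst\adelta(\dif)}$ of $\ESingMan$ — which lies in $\Int{\BNbh}$ since $\aConst\adelta(\dif)<\eps$ — one has $\phi\equiv t$, so $H(\dif,t)$ there coincides with $\tndif_{\dif}(t,\cdot)$. Concretely: (a) for $t=1$ one has $\phi\equiv1$ everywhere, so $H_1(\dif)=\tndif_{\dif}(1,\cdot)=\dif$ by Lemma~\ref{lm:HadamardLemma_v_bundles1}\ref{enum:g:htx_tgxt}; (b) for $t=0$ the map $H_0(\dif)$ equals $\tndif_{\dif}(0,\cdot)=\tfib{\dif}$ on $\Rman_{\aConst\adelta(\dif)}$ by Lemma~\ref{lm:HadamardLemma_v_bundles1}\ref{enum:g:g0_linear}, a vector bundle morphism, so $H_0(\CNF)\subset\ClNF$; (c) if $\dif\in\CVNF$ then $\tndif_{\dif}(t,\cdot)\in\CVNF$ by Lemma~\ref{lm:HadamardLemma_v_bundles1}\ref{enum:g:h_in_CVEF}, and since membership in $\CVNF$ is determined by the $1$-jet along $\ESingMan$ (Lemma~\ref{lm:loc_descr_CEF}\ref{enum:char:CVEF}), the coincidence near $\ESingMan$ forces $H(\dif,t)\in\CVNF$; (d) if $\dif\in\ClNF$ then $\tndif_{\dif}(t,\cdot)=\dif$ near $\ESingMan$ by Lemma~\ref{lm:HadamardLemma_v_bundles1}\ref{enum:g:h_in_ClEF}, hence $H(\dif,t)=\dif$ near $\ESingMan$ and $H(\dif,t)\in\ClNF$. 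Finally, (a)--(d) exhibit $H$ as a $\Wr{\infty,\infty}$-continuous deformation of $\CNF$ into $\ClNF$ keeping $\CVNF$ and $\ClNF$ invariant; hence $H_0$ is a homotopy inverse of each of the inclusions $\ClNF\subset\CNF$, $\CVNF\subset\CNF$, $\ClNF\subset\CVNF$, and, together with the already established fact that $\LNF\subset\ClNF$ is a $\Wr{\infty,\infty}$-homotopy equivalence (Lemma~\ref{lm:HadamardLemma_v_bundles1}\ref{enum:g:homotopy_equivalence}), all the inclusions in~\eqref{equ:CEF} are $\Wr{\infty,\infty}$-homotopy equivalences.

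The point to be careful about is that $H$ is \emph{not} a deformation retraction onto $\ClNF$: on the collar $\aConst\adelta(\dif)\leq\nrm{\pw}\leq\bConst\adelta(\dif)$ the function $\phi$ takes values strictly between $0$ and $1$, so even for $\dif\in\ClNF$ (whose ``linear region'' may be smaller than this collar) one generally has $H_0(\dif)=\tndif_{\dif}(\phi,\cdot)\ne\dif$; one must therefore argue the homotopy equivalence via the weaker ``deformation of a space into a subspace'' criterion rather than via a retraction. The only other mild subtlety is the one-derivative loss in Lemma~\ref{lm:general_linearization}\ref{enum:HH_cont:bdelta_pos}, which is precisely why the argument for~\ref{th:homotopy_eq_X:cont} must pass through Lemma~\ref{lm:infty_continuous}.
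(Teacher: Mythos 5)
Your proof is correct and follows essentially the same line as the paper's (read off the formulas for $\phi$ and $\HH$, invoke the properties of the Hadamard homotopy $\tndif_{\dif}$ from Lemma~\ref{lm:HadamardLemma_v_bundles1} and the continuity facts from Lemma~\ref{lm:general_linearization}); you are merely more explicit on two of the steps, namely in spelling out why $\Wr{r,s}$-continuity for arbitrary finite $s$ follows with $r=\max\{s+1,\rr\}$, and in observing that for (c) and (d) the relevant memberships only depend on the behaviour of $H(\dif,t)$ on the region $\Rman_{\aConst\adelta(\dif)}$ where $\phi\equiv t$. Your closing remark that $H$ is not a retraction onto $\ClNF$ (so the homotopy equivalence must be argued via the ``deformation of a space into a subspace'' criterion) is a correct and useful nuance that is consistent with the paper's phrasing ``deformation of $\CNF$ into $\ClNF$''.
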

\begin{proof}
Statement~\ref{th:homotopy_eq_X:H_support} directly follows from~\ref{enum:v_geq_\bConst_delta}.

\ref{th:homotopy_eq_X:cont}
Since the topology $\Wr{\infty}$ is finer than the $\Wr{\rr}$ one, it follows that $\adelta$ is $\Wr{\infty}$-continuous.
Together with $\Wr{\infty,\infty}$-continuity of $\HH$ for $\bdelta>0$ this implies $\Wr{\infty,\infty}$-continuity of $H$.

\ref{th:homotopy_eq_X:deform}
Statement (a) is evident, and the inclusions (b), (c), (d) follow respectively from statements~\ref{enum:g:g0_linear}, \ref{enum:g:h_in_CVEF} and \ref{enum:g:h_in_ClEF} of Lemma~\ref{lm:HadamardLemma_v_bundles1}.
\end{proof}

\begin{subdefinition}
Let $\XX\subset\CNF$ be a subset, and $\adelta:\XX\to(0;+\infty)$ a $\Wr{\rr}$-continuous function, for some $\rr\geq1$.
Then the map $H:\XX\times[0;1]\to\CNF$ given by~\eqref{equ:linearization} will be called the \myemph{$\adelta$-linearizing homotopy}.
We will sometimes denote $\HH(\dif,\bdelta,t):\XX\to\CNF$ by $\mhdt{H}{\dif}{\bdelta}{t}$.
\end{subdefinition}

The following lemma allows to detect subsets $\XX\subset\CNF$ invariant under $\adelta$-linearizing homotopies.

\begin{sublemma}\label{lm:XX_is_HH_invariant}
Let $\XX\subset \CNF$ be a subset and $\rr\geq1$.
Suppose $\BNbh$ is compact and for every $\dif\in\XX$ there exists a $\Wr{\rr}$-neighborhood $\UU_{\dif}$ of $\dif$ in $\XX$ and $\eps_{\dif}>0$ such that
\[
\HH(\UU_{\dif}\times[0;\eps_{\dif}]\times[0;1]) \subset \XX.
\]
Then there exists a continuous function $\adelta:\XX\to(0;+\infty)$ such that the image of the corresponding $\adelta$-linearizing homotopy $H:\XX\times[0;1]\to\CNF$ is contained in $\XX$.
Hence, the inclusion $\XX\cap\ClNF \subset \XX$ is a $\Wr{\infty,\infty}$-homotopy equivalence.
\end{sublemma}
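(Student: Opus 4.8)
The plan is to reduce the statement to a partition‑of‑unity argument in the space $(\CNF,\Wr{\rr})$ and then to invoke Corollary~\ref{cor:hom_eq_case_CNF}. First I would record a harmless normalisation. Since $\BNbh$ is compact and $\ESingMan$ (being closed in $\BNbh$) is compact and contained in $\Int{\BNbh}$, there is $\eps>0$ with $\Rman_{\eps}\subset\Int{\BNbh}$; replacing each $\eps_{\dif}$ by $\min\{\eps_{\dif},\eps/2\}$ only shrinks the interval $[0;\eps_{\dif}]$, so the hypothesis $\HH(\UU_{\dif}\times[0;\eps_{\dif}]\times[0;1])\subset\XX$ is preserved, and I may assume $\eps_{\dif}<\eps$ for all $\dif\in\XX$. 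The task then becomes to construct a $\Wr{\rr}$‑continuous function $\adelta:\XX\to(0;\eps)$ such that for every $\dif\in\XX$ there exists $\dif'\in\XX$ with $\dif\in\UU_{\dif'}$ and $\adelta(\dif)\le\eps_{\dif'}$: for such $\adelta$ the hypothesis applied to $\dif'$ yields $\HH(\dif,\adelta(\dif),t)\in\XX$ for all $t\in[0;1]$, i.e. the corresponding $\adelta$‑linearizing homotopy $H$ satisfies $H(\XX\times[0;1])\subset\XX$.

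To build $\adelta$ I would use that, for compact $\BNbh$, the space $\CNF$ with the topology $\Wr{\rr}$ is metrizable (as a subspace of $\Crm{\rr}{\BNbh}{\FTotMan}$, which is metrizable because the domain is compact), hence so is the subspace $\XX$, and a metrizable space is paracompact. Applying paracompactness to the open cover $\{\UU_{\dif}\}_{\dif\in\XX}$ of $\XX$, choose a locally finite partition of unity $\{\lambda_j\}_{j\in J}$ subordinate to it, so that for each $j$ there is $\dif_j\in\XX$ with $\supp\lambda_j\subset\UU_{\dif_j}$, and put
\[
    \adelta \ = \ \sum_{j\in J}\lambda_j\,\eps_{\dif_j}\ :\ \XX\longrightarrow(0;\eps).
\]
This is $\Wr{\rr}$‑continuous, being a locally finite sum of $\Wr{\rr}$‑continuous functions, and strictly positive, since at any $\dif$ some $\lambda_j(\dif)>0$ and $\eps_{\dif_j}>0$; it takes values in $(0;\eps)$ because each $\eps_{\dif_j}<\eps$ and $\sum_j\lambda_j\equiv1$. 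The key point is the convexity estimate: fix $\dif\in\XX$ and let $j_0$ maximise $\eps_{\dif_j}$ among the finitely many $j$ with $\lambda_j(\dif)>0$; then $\dif\in\supp\lambda_{j_0}\subset\UU_{\dif_{j_0}}$, while $\adelta(\dif)=\sum_j\lambda_j(\dif)\,\eps_{\dif_j}\le\eps_{\dif_{j_0}}$, which is exactly the property required above, with $\dif'=\dif_{j_0}$.

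Finally I would feed this $\adelta$ into Corollary~\ref{cor:hom_eq_case_CNF}: the $\adelta$‑linearizing homotopy $H$ is $\Wr{\infty,\infty}$‑continuous, $H_1=\id_{\CNF}$, $H_0(\CNF)\subset\ClNF$, and $H(\ClNF\times[0;1])\subset\ClNF$ by part (d) of that corollary. Restricting to $\XX$ and combining with $H(\XX\times[0;1])\subset\XX$ from the previous step, I obtain $H_1|_{\XX}=\id_{\XX}$, $H_0(\XX)\subset\XX\cap\ClNF$, and $H\bigl((\XX\cap\ClNF)\times[0;1]\bigr)\subset\XX\cap\ClNF$; thus $H$ restricts to a \emph{deformation retraction} of $\XX$ onto $\XX\cap\ClNF$ with respect to $\Wr{\infty}$, and in particular the inclusion $\XX\cap\ClNF\subset\XX$ is a $\Wr{\infty,\infty}$‑homotopy equivalence. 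I expect the only genuinely delicate point to be the paracompactness/partition‑of‑unity step — one must confirm that $\{\UU_{\dif}\}_{\dif\in\XX}$ is an open cover of $\XX$ in the subspace $\Wr{\rr}$‑topology and that $\XX$ is paracompact there — together with the elementary but essential observation that a convex combination of the numbers $\eps_{\dif_j}$ is bounded by their maximum; everything else is bookkeeping inherited from Lemma~\ref{lm:HadamardLemma_v_bundles1} and Corollary~\ref{cor:hom_eq_case_CNF}.
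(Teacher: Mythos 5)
Your proof is correct and takes essentially the same route as the paper's: metrizability of $(\XX,\Wr{\rr})$ for compact $\BNbh$ gives paracompactness, one chooses a locally finite partition of unity subordinate to the $\Wr{\rr}$-open cover $\{\UU_{\dif}\}_{\dif\in\XX}$, and one manufactures $\adelta$ from the data $\{\eps_{\dif}\}$. You merely make explicit two points the paper states without detail — the formula $\adelta=\sum_j\lambda_j\,\eps_{\dif_j}$ together with the observation that, for $j_0$ maximizing $\eps_{\dif_j}$ among those $j$ with $\lambda_j(\dif)>0$, one has simultaneously $\dif\in\supp\lambda_{j_0}\subset\UU_{\dif_{j_0}}$ and $\adelta(\dif)\le\eps_{\dif_{j_0}}$ by convexity — and the final passage (via Lemma~\ref{lm:HadamardLemma_v_bundles1} and the argument of Corollary~\ref{cor:hom_eq_case_CNF}) from the $\XX$-invariance of the $\adelta$-linearizing homotopy to the asserted $\Wr{\infty,\infty}$-homotopy equivalence.
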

\begin{proof}
Notice that $\CNF$ and thus $\XX$ are metrizable with respect to the topology $\Wr{\rr}$.
Therefore, they are paracompact, so there exists a locally finite cover $\{ \UU_{\lambda} \}_{\lambda\in\Lambda}$ of $\XX$ being a refinement of the cover $\{\UU_{\dif}\}_{\dif\in\XX}$, i.e.\ for every $\lambda\in\Lambda$ there exists $\dif_{\lambda}\in\XX$ such that $\UU_{\lambda} \subset \UU_{\dif_{\lambda}}$.
In particular, $\HH(\UU_{\lambda}\times[0;\eps_{\dif_{\lambda}}]\times[0;1])\subset \XX$.
Using partition of unity one can further construct a continuous function $\adelta:\CNF\to(0;\infty)$ such that $\adelta<\eps_{\dif_{\lambda}}$ on $\UU_{\lambda}$.
Hence if $\dif\in\UU_{\lambda}$, then $\adelta(\dif) < \eps_{\dif_{\lambda}}$ and therefore $H(\dif,t) = \HH(\dif,\adelta(\dif),t)\in\XX$ for all $t\in[0;1]$.
\end{proof}

\begin{subremark}\rm
Notice that a priori an $\adelta$-linearizing homotopy does not preserve useful open subsets like immersions, embeddings, submersions, and diffeomorphisms open in $\Wr{\rr}$ topologies for $\rr\geq1$.
Moreover, disregarding for the moment Lemma~\ref{lm:general_linearization}\ref{enum:HH_cont:not_cont}, \myemph{suppose that $\HH$ is $\Wr{s,r}$-continuous for some $s\geq 0$ and $r\geq 1$; then for every $\Wr{1}$-open subset $\UU\subset\CNF$ there exists a $\Wr{1}$-continuous function $\adelta:\UU\to(0;+\infty)$ such that the image of the $\adelta$-linearizing homotopy $H:\UU\times[0;1]\to\CNF$ is contained in $\UU$.}
In particular, this would give a deformation of $\UU$ onto $\UU\cap \ClNF$.

Indeed, notice that $\HH(\dif,0,t) = \dif$ for all $\dif\in\UU$ and $t\ni[0;1]$.
In other words, $\HH^{-1}(\UU)$ is a $\Wr{s}$-open neighborhood of $\UU\times\{0\}\times[0;1]$.
Then by paracompactness of $\UU\times\{0\}\times[0;1]$ one can find a $\Wr{s}$-continuous function $\adelta:\UU\to(0;+\infty)$ such that $\HH(\dif,\bdelta,t) \in \UU$ for all $\dif\in\UU$, $\bdelta \in [0;\adelta(\dif)]$, $t\in[0;1]$, i.e.\ the corresponding $\adelta$-linearizing homotopy preserves $\UU$.

In particular, this would give a proof that \myemph{for sufficiently small function $\adelta$, the $\adelta$-linearizing homotopy preserves open $\Cr{1}$-embeddings, e.g.\ diffeomorphisms}.
We will prove that this is nevertheless true, though, due to discontinuity of $\HH$, one needs more delicate arguments and estimates.

Namely, Lemma~\ref{lm:general_linearization} shows that non-zero submatrices matrices $Q$ are the obstruction of continuity of $\HH$ into $\Wr{1}$-topology on $\CNF$.
We will show that this is a unique obstruction: at least ``the remaining part of $1$-jets of $\HH$'' corresponding to submatrices $P$, $R$, and $S$ continuously depend on $3$-jet of $\dif$, see Corollary~\ref{cor:HH_trunc_1_jet_cont} below.
\end{subremark}

\subsection{Horizontal and vertical ranks of maps belonging to $\CNF$}
Let $\dif\in\CNF$, and $\hat{\dif}=(\aafunc,\bbfunc):\Uset \to\COset\times\bR^{\frdim}$ be a local representation of $\dif$ at some point $\pw\in\ESingMan$ with respect to some trivialized local charts $\BChart$ and $\CChart$.
In particular, $\BChart^{-1}(\pw)=(\px,0)$, and the Jacobi matrix of $\hat{\dif}$ at $(\px,0)$ is of the form $J_{\dif} = \amatr{P}{Q}{0}{S}$ and the ranks of $P$ and $S$ do not depend on a particular local representation of $\dif$, see Lemma~\ref{lm:loc_descr_CEF}.
We will call $\rank(P(\px,0))$ and $\rank(S(\px,0))$ respectively the \myemph{horizontal} and the \myemph{vertical} ranks of $\dif$ at $\pw$.

For $a,b\geq0$ denote by $\CPNFa$, resp.\ $\CSNFb$, the subsets of $\CNF$ consisting of maps $\dif$ for which the horizontal rank $\geq a$, resp.\ vertical rank $\geq b$, at each $\pw\in\ESingMan$.
Also we put
\[
    \CPSNFab := \CPNFa \cap \CSNFb
\]
Evidently, these spaces are $\Wr{1}$-open in $\CNF$.
Moreover, some of them can be described in another terms.
Recall that
\begin{align*}
    \dim(\ESingMan)  &= \exdim, &
    \dim(\ETotMan)   &= \exdim + \erdim, &
    \dim(\FSingMan) &= \fxdim, &
    \dim(\FTotMan)  &= \fxdim + \frdim.
\end{align*}
Then, for instance,
\begin{itemize}
\item $\CPNFx{0}=\CSNFx{0}=\CPSNFx{0}{0}=\CNF$;
\item $\CPNFx{\exdim} = \{ \dif \in \CNF \mid \text{$\dif|_{\ESingMan}:\ESingMan \to \FSingMan$ is an immersion} \}$;
\item $\CSNFx{\frdim} = \{ \dif \in \CNF \mid \text{$\dif$ is transversal to $\FSingMan$ along $\ESingMan$} \}$;
\item if $\exdim=\fxdim$ and $\erdim=\frdim$, then
\[ \CPSNFx{\exdim}{\erdim} = \{ \dif\in\CNF \mid \text{$\dif$ is a local diffeomorphism near $\ESingMan$}\}.\]
\end{itemize}

\begin{subtheorem}\label{th:pres_max_rank}
Suppose $\BNbh$ and $\ESingMan$ are compact, and let $\eps>0$ be such that $\Rman_{\eps} \subset \BNbh$.
Let $\XX$ be one of the following spaces:
\begin{enumerate}[label={\rm\arabic*)}]
\item\label{eqnum:fin:Chora} $\CPNFx{a}$ for some $a\geq0$;
\item\label{eqnum:fin:Cvertba} $\CSNFx{b}$ for some $b\geq0$;
\item\label{eqnum:fin:Cplusab} $\CPSNFx{a}{b}$ for some $a,b\geq0$;
\item\label{eqnum:fin:Embplusab} $\EPSNFx{\exdim}{\erdim} = \{ \dif \in \CPSNFx{\exdim}{\erdim} \mid \text{$\dif$ is an embedding}\}$;
\item\label{eqnum:fin:Emb} $\EmbNF = \{ \dif \in \CNF \mid \text{$\dif$ is an embedding}\}$ for the case when $\dim(\ESingMan)=\dim(\FSingMan)$ and $\dim(\ETotMan)=\dim(\FTotMan)$.
\end{enumerate}
Then there exists a $\Wr{3}$-continuous function $\adelta:\XX\to(0;\eps)$ such that the image of the corresponding $\adelta$-linearizing homotopy $H:\XX\times[0;1]\to\CNF$ is contained in $\XX$.
Therefore, the inclusion $\XX\cap\ClNF \subset \XX$ is a $\Wr{\infty,\infty}$-homotopy equivalence.
\end{subtheorem}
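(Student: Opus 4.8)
The plan is to reduce the theorem to Lemma~\ref{lm:XX_is_HH_invariant}: for each of the listed spaces $\XX$ I would verify that for every $\dif\in\XX$ there are a $\Wr{2}$-neighborhood $\UU_{\dif}$ of $\dif$ in $\XX$ and an $\eps_{\dif}\in(0;\eps)$ with $\HH(\UU_{\dif}\times[0;\eps_{\dif}]\times[0;1])\subseteq\XX$. Since the topology $\Wr{2}$ is coarser than $\Wr{3}$, the function $\adelta$ produced by Lemma~\ref{lm:XX_is_HH_invariant} is then automatically $\Wr{3}$-continuous, it takes values in $(0;\eps)$ because $\eps_{\dif}<\eps$, and the same lemma also yields the homotopy equivalence $\XX\cap\ClNF\subseteq\XX$. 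The first point is that the rank conditions come for free: for $\pw\in\ESingMan$ one has $\phi(\gdif,\bdelta,t,\pw)=t$, so $\HH(\gdif,\bdelta,t)(\pw)=\tndif_{\gdif}(t,\pw)$, and by Lemma~\ref{lm:HadamardLemma_v_bundles1}\ref{enum:g:Jgt} the Jacobi matrix of $\tndif_{\gdif}(t,\cdot)$ at $\pw$ has the same diagonal blocks $P$ and $S$ as $J_{\gdif}(\pw)$. Hence the horizontal and vertical ranks of $\HH(\gdif,\bdelta,t)$ along $\ESingMan$ equal those of $\gdif$, so in cases~\ref{eqnum:fin:Chora}--\ref{eqnum:fin:Cplusab} one simply takes $\UU_{\dif}=\XX$ and any $\eps_{\dif}<\eps$, and in cases~\ref{eqnum:fin:Embplusab}--\ref{eqnum:fin:Emb} only the \emph{embedding} property of $\HH(\gdif,\bdelta,t)$ still has to be established.

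For the two embedding cases I would work with the rescaled family $F_{\gdif,\bdelta,t}\colon\Rman_{\bConst}\to\FTotMan$, $F_{\gdif,\bdelta,t}(\pu)=\tfrac1{\bdelta}\HH(\gdif,\bdelta,t)(\bdelta\pu)$, which, writing $\psi(t,\pu)=t+(1-t)\mu(\nrm{\pu})$, equals $\tndif_{\gdif}\bigl(\bdelta\psi(t,\pu),\pu\bigr)$ by~\eqref{equ:Hht}; note that $\HH(\gdif,\bdelta,t)$ equals $\gdif$ off $\Rman_{\bConst\bdelta}$, and that $\HH(\gdif,\bdelta,t)$ restricted to $\Rman_{\bConst\bdelta}$ is an embedding, resp.\ an immersion, exactly when $F_{\gdif,\bdelta,t}$ is so (the two maps are conjugate by the fibrewise scalings). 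Since $\bdelta\psi(t,\pu)\to0$ uniformly as $\bdelta\to0$, Lemma~\ref{lm:HadamardLemma_v_bundles1}\ref{enum:g:h_to_thh_continuous} gives $F_{\gdif,\bdelta,t}\to\tfib{\gdif}|_{\Rman_{\bConst}}$ in $\Wr{1}$, uniformly for $t\in[0;1]$ and $\gdif$ in a small $\Wr{2}$-neighborhood $\UU_{\dif}$ of $\dif$ --- this is precisely the step where the $\bdelta$-rescaling annihilates the off-diagonal block $Q$ responsible for the discontinuity in Lemma~\ref{lm:general_linearization}\ref{enum:HH_cont:not_cont}. Because $\dif$ is an embedding with the prescribed ranks, $\tfib{\dif}$ is a fibrewise injective vector bundle morphism over the injective map $\dif|_{\ESingMan}$, hence globally injective, and it has full column rank; so $\tfib{\dif}|_{\Rman_{\bConst}}$ is an embedding of the compact manifold $\Rman_{\bConst}$. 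As immersions and embeddings of $\Rman_{\bConst}$ form $\Wr{1}$-open sets, shrinking $\UU_{\dif}$ (so that also $\tfib{\gdif}$ stays $\Wr{1}$-close to $\tfib{\dif}|_{\Rman_{\bConst}}$) I obtain that for all small $\bdelta$ the map $F_{\gdif,\bdelta,t}$ is an embedding and therefore $\HH(\gdif,\bdelta,t)$ is an immersion on all of $\BNbh$ and an embedding on $\Rman_{\bConst\bdelta}$.

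The hard part will be the \emph{global} injectivity of the immersion $\HH(\gdif,\bdelta,t)$ of the compact manifold $\BNbh$, i.e.\ ruling out a collision between a point of $\Rman_{\bConst\bdelta}$ and a point of $\BNbh\setminus\Rman_{\bConst\bdelta}$, where $\HH(\gdif,\bdelta,t)=\gdif$. The intended argument uses the vertical-rank bound $\nrm{\gdif(\pw)}\ge c_{0}\nrm{\pw}$ for $\nrm{\pw}\le\tau_{0}$ (uniform over $\UU_{\dif}$), which confines $\HH(\gdif,\bdelta,t)(\Rman_{\bConst\bdelta})$ to an $O(\bdelta)$-neighborhood of $\gdif(\ESingMan)$, together with a three-zone estimate on the complement: on $\{\nrm{\pw}\ge\tau_{0}\}$ the set $\gdif(\{\nrm{\pw}\ge\tau_{0}\})$ keeps a fixed positive distance from $\gdif(\ESingMan)$ since $\gdif$ is an embedding of a compact manifold; on the annulus $\bConst\bdelta<\nrm{\pw}<\tau_{0}$ with rescaled norm $\le R$ both $F_{\gdif,\bdelta,t}$ and $\tfrac1{\bdelta}\gdif(\bdelta\,\cdot\,)$ converge in $\Wr{1}$ to the embedding $\tfib{\dif}|_{\Rman_{R}}$, giving a uniform gap; and for rescaled norm $\ge R$ the fibre-norm of $\tfrac1{\bdelta}\gdif(\bdelta\,\cdot\,)$ is at least $c_{0}R$, which for $R$ chosen large exceeds $\sup_{\Rman_{\bConst}}\nrm{\tfib{\dif}}$. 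Carrying this through --- with the constants $c_{0},\tau_{0},R$ and all separation distances chosen uniformly over $\UU_{\dif}$ --- produces the required $\eps_{\dif}$, after which Lemma~\ref{lm:XX_is_HH_invariant} completes the proof. Thus the genuine obstacle is not the rank conditions or the immersion property (both formal consequences of Lemma~\ref{lm:HadamardLemma_v_bundles1}), but this injectivity-in-the-large estimate, which is exactly where the hypothesis that $\dif$ is an embedding satisfying the rank conditions is used.
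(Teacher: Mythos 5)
Your reduction to Lemma~\ref{lm:XX_is_HH_invariant} matches the paper's strategy, but on both halves you diverge from what the paper does. For cases~\ref{eqnum:fin:Chora}--\ref{eqnum:fin:Cplusab} you observe that on $\Rman_{\aConst\bdelta}$ the cutoff $\mu$ vanishes identically, so $\HH(\gdif,\bdelta,t)=\tndif_{\gdif}(t,\cdot)$ there and, by Lemma~\ref{lm:HadamardLemma_v_bundles1}\ref{enum:g:Jgt}, the diagonal blocks $P$ and $S$ of the Jacobi matrix along $\ESingMan$ are literally unchanged; the rank conditions are therefore invariant for \emph{every} $\bdelta$, and one may take $\UU_{\dif}=\XX$. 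This is a genuine simplification over the paper, which routes the same three cases through the quantitative estimates of Corollary~\ref{cor:approx_PS} (those estimates give rank control on a full compact neighborhood of $\ESingMan$, information the paper does need for case~\ref{eqnum:fin:Embplusab}, but which is unnecessary for the first three). For the embedding cases~\ref{eqnum:fin:Embplusab}--\ref{eqnum:fin:Emb} your route is genuinely different: you rescale $\HH(\gdif,\bdelta,t)$ by $\tfrac1\bdelta$ to $F_{\gdif,\bdelta,t}=\tndif_{\gdif}\bigl(\bdelta\psi(t,\cdot),\cdot\bigr)$, show it $\Wr{1}$-converges to the embedding $\tfib{\dif}$, and separate the image of the rescaled core from the far part of $\gdif(\BNbh)$ by a fibre-norm growth estimate. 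The paper instead proves a local injectivity estimate by a sequence/compactness argument (Corollary~\ref{cor:approx_emb}) and settles global injectivity by a $\Wr{0}$-continuity argument that traps the near and far parts of the $\HH$-image inside two disjoint open sets $\Vman$, $\Wman$ separating $\dif(\ESingMan)$ from $\dif(\overline{\BNbh\setminus\Rman_{\eps''}})$. Your version is more geometric and makes explicit that the whole deformation is a blow-down onto $\tfib{\dif}$; the paper's is drier but perhaps easier to make airtight. Two caveats: (i) the uniform $\Wr{1}$-convergence of $F_{\gdif,\bdelta,t}$ over a neighborhood of $\dif$ needs control of the mixed second derivatives $\partial_\tau\partial_{\pu}\tndif_{\gdif}$, which by Lemma~\ref{lm:HadamardLemma_v_bundles1}\ref{enum:g:h_to_thh_continuous} costs a $\Wr{3}$-neighborhood, not $\Wr{2}$ as you state --- consistent with the theorem's $\Wr{3}$ claim, hence harmless, but the $\Wr{2}$ bound running through your opening paragraph should be $\Wr{3}$; (ii) the three-zone global injectivity argument is the right idea, but you leave several uniformity claims (the bound $\nrm{\gdif(\pw)}\ge c_0\nrm{\pw}$ near $\ESingMan$, and the positive distance from $\gdif(\{\nrm{\pw}\geq\tau_0\})$ to $\gdif(\ESingMan)$, both uniform over $\UU_{\dif}$) as stated-but-unverified, so this part is a plan rather than a finished proof, in contrast to the paper's compact written argument.
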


The proof will be given in~\S\ref{sect:proof:th:pres_max_rank}.
First we will establish certain inequalities for matrices $P$ and $S$, see Lemma~\ref{lm:estimates_on_PRS}.

\subsection{Certain estimations for linearizing homotopies}\label{sect:estimation_lin_hom}
We first consider the case local case of trivial vector bundles, $\Epr:\bR^{\exdim}\times\bR^{\erdim}\to\bR^{\exdim}$ and $\Fpr:\bR^{\fxdim}\times\bR^{\frdim}\to\bR^{\fxdim}$.

Let $\Uset \subset \bR^{\exdim}\times\bR^{\erdim}$ be an open set.
Then $\CUF$ is the space of all $\Cinfty$ maps $\dif=(\aafunc,\bbfunc): \, \Uset \to \bR^{\fxdim}\times\bR^{\frdim}$ such that $\dif\bigl(\Uset\cap (\bR^{\exdim}\times 0)\bigr) \subset \bR^{\fxdim}\times 0$.
In other words, $\bbfunc(\px,0)=0$ for all $(\px,0)\in\Uset$, whence by~\eqref{equ:fxbv_fxav}
\begin{equation}\label{equ:bbfunc}
    \bbfunc(\px,\pv) = \sum_{\iv=1}^{\erdim} \pvi{\iv} \smallint\limits_{0}^{1} \bbfunc'_{\pvi{\iv}}(\px,s\pv) ds.
\end{equation}
Let $\amatr{P_{\dif}}{Q_{\dif}}{R_{\dif}}{S_{\dif}}:\Uman\to\Mat{\fxdim+\frdim}{\exdim+\erdim}$ be the Jacobi matrix map of $\dif$.

Fix $\bdelta>0$ and let $\phi:[0;1]\times\bR^{\erdim}\to[0;1]$, $\phi(t,\pv)=t+(1-t)\mu(\nrm{v}/\bdelta)$, be the function given by~\eqref{equ:phi_full} but for simplicity we omit $\dif$ and the constant $\bdelta$ from notation.
Then the $\bdelta$-linearizing homotopy $\Hhom:[0;1]\times\Uman\to\bR^{\fxdim}\times\bR^{\frdim}$ for $\dif$ is given by
\begin{equation}\label{equ:H_local}
    \Hhom(t,\px,\pv) := \mhdt{H}{\dif}{\bdelta}{t}(\px,\pv)
        = \Bigl(
            \aafunc(\px, \phi(t,\pv)\pv), \
            \sum_{\iv=1}^{\erdim} \pvi{\iv} \smallint\limits_{0}^{1} \bbfunc'_{\pvi{\iv}}(\px,s\phi(t,\pv)\pv) ds
          \Bigr).
\end{equation}

Denote by $\amatr{\mhdt{P}{\dif}{\bdelta}{t}}{\mhdt{Q}{\dif}{\bdelta}{t}}{\mhdt{R}{\dif}{\bdelta}{t}}{\mhdt{S}{\dif}{\bdelta}{t}}:\Uman\to\Mat{\fxdim+\frdim}{\exdim+\erdim}$ the Jacobi matrix (regarded as a matrix valued map from $\Uset$) of the mapping $\mhdt{H}{\dif}{\bdelta}{t}:\Uset\to \bR^{\fxdim}\times\bR^{\frdim}$.
Since $\mhdt{H}{\dif}{\bdelta}{1}=\dif$ for all $\bdelta>0$, we also have that $\amatr{\mhdt{P}{\dif}{\bdelta}{1}}{\mhdt{Q}{\dif}{\bdelta}{1}}{\mhdt{R}{\dif}{\bdelta}{1}}{\mhdt{S}{\dif}{\bdelta}{1}} \equiv \amatr{P_{\dif}}{Q_{\dif}}{R_{\dif}}{S_{\dif}}$ is the Jacobi matrix map of $\dif$ and it does not depend on $\bdelta$ as well.

Define also the following $\Cinfty$ maps $\xcoord,\rcoord:[0;1]\times\Uset\to\bR^{\fxdim}\times\bR^{\frdim}$
\begin{align}
\label{equ:xcoord_defn}
\xcoord(t,\px,\pv) &= \aafunc(\px,\pv)-\aafunc(\px,\phi(t,\px,\pv)\pv)
                    \,\stackrel{\eqref{equ:fxbv_fxav}}{=\!=}\, \sum_{\iv=1}^{\erdim} \pvi{\iv} \smallint\limits_{\phi(t,\pv)}^{1} \aafunc'_{\pvi{\iv}}(\px,s\pv)ds. \\
\label{equ:rcoord_defn}
\rcoord(t,\px,\pv)
    &= \bbfunc(\px,\pv) - \sum_{\iv=1}^{\erdim} \pvi{\iv} \smallint\limits_{0}^{1} \bbfunc'_{\pvi{\iv}}(\px,s\phi(t,\pv)\pv) ds
        \stackrel{\eqref{equ:bbfunc}}{=\!=} \\
    &= \sum_{\iv=1}^{\erdim} \pvi{\iv} \smallint\limits_{0}^{1}
        \bigl( \bbfunc'_{\pvi{\iv}}(\px,s\pv) - \bbfunc'_{\pvi{\iv}}(\px,s\phi(t,\pv)\pv)\bigr) ds
        \stackrel{\eqref{equ:fxbv_fxav}}{=\!=} \nonumber \\
    &=  \sum_{i=1}^{\erdim}
        \sum_{j=1}^{\erdim} \pvi{i}  \pvi{j}
            \smallint\limits_{0}^{1}
            \biggl(
                \smallint_{\phi(t,\pv)}^{1} \bbfunc''_{\pvi{i} \pvi{j}}(\px,s\tau\pv) d\tau
            \biggr) s \, ds,
            \nonumber
\end{align}
being coordinate functions on the difference $\dif(\px,\pv)-\Hhom(t,\px,\pv)$.
It will also be convenient to denote $\xcoord_t(\px,\pv):=\xcoord(t,\px,\pv)$ and $\rcoord_t(\px,\pv):=\rcoord(t,\px,\pv)$.

\begin{sublemma}\label{lm:estimates_on_PRS}
Let $\BComp\subset\Uset$ be any compact subset.
Then for each $t\in[0;1]$ we have the following inequalities:
\begin{align}
\label{equ:estim:x}    |\xcoord_t|_{0,\BComp}  &\leq \bdelta |\aafunc|_{1,\erdim,\BComp}, \\
\label{equ:estim:r}    |\rcoord_t|_{0,\BComp}  &\leq \bdelta^2 |\bbfunc|_{2,\erdim,\BComp}, \\
\label{equ:estim:P}    |\mhdt{P}{\dif}{\bdelta}{t} - P_{\dif}|_{0,\BComp}  &\leq \bdelta |\aafunc|_{2,\BComp}, \\
\label{equ:estim:R}    |\mhdt{R}{\dif}{\bdelta}{t} - R_{\dif}|_{0,\BComp}  &\leq \bdelta^2 |\bbfunc|_{3,\BComp}, \\
\label{equ:estim:S}    |\mhdt{S}{\dif}{\bdelta}{t} - S_{\dif}|_{0,\BComp}  &\leq \bdelta\cdot
        \bigl(
                2 |\bbfunc|_{2,\erdim,\BComp} + \bdelta|\bbfunc|_{3,\erdim,\BComp} +
                \erdim|\mu|_{1} \cdot |\bbfunc|_{2,\erdim,\BComp}
        \bigr).
\end{align}
\end{sublemma}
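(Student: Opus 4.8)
The plan is to bound each of the five quantities coordinate-function by coordinate-function on $\BComp$ and then sum the suprema into the stated semi-norms. Two observations do all the work. The first is a \textbf{localization principle}: because the fixed cut-off $\mu$ equals $1$ on $[\bConst;+\infty)$, the function $\phi$ of~\eqref{equ:phi_full} satisfies $\phi(\dif,\bdelta,t,\pw)=1$, and all its $\pv$-partials vanish, whenever $\nrm{\pw}\ge\bConst\bdelta$; hence each of $\xcoord_{t}$, $\rcoord_{t}$, $\mhdt{P}{\dif}{\bdelta}{t}-P_{\dif}$, $\mhdt{R}{\dif}{\bdelta}{t}-R_{\dif}$, $\mhdt{S}{\dif}{\bdelta}{t}-S_{\dif}$ is identically zero at points $(\px,\pv)$ with $\nrm{\pv}\ge\bConst\bdelta$, so in estimating them we may assume $\nrm{\pv}<\bdelta$, whence $|\pvi{\iv}|\le\nrm{\pv}<\bdelta$ for every fibre coordinate $\iv$; this is precisely what turns each monomial factor $\pvi{\iv}$ peeled off below into a factor $\bdelta$. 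The second observation: $\BNbh$ being \STL{}, the chart domain $\Uset$ may be taken closed under $(\px,\pv)\mapsto(\px,s\pv)$ for $s\in[0;1]$, and after enlarging $\BComp$ to its vertical hull (a compact subset of $\Uset$) we may assume $\BComp$ has the same property; then any integral $\int_{a}^{b}g(\px,s\pv)\,ds$ with $[a;b]\subseteq[0;1]$ has modulus $\le\sup_{\BComp}|g|$, its integrand running over a segment inside $\BComp$ of length-parameter $\le1$.

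The scheme for all five bounds is the same: bring the difference into ``Hadamard form'' by iterating identity~\eqref{equ:fxbv_fxav}, each application extracting one factor $\pvi{\iv}$ and differentiating once more (in $\pv$) the $\aafunc$ or $\bbfunc$ appearing under the integral, then apply the two observations and sum over indices. For $\xcoord_{t}$ and $\rcoord_{t}$ the needed forms are already the right-hand sides of~\eqref{equ:xcoord_defn} and~\eqref{equ:rcoord_defn} --- one factor $\pvi{\iv}$ against $\int_{\phi}^{1}\aafunc'_{\pvi{\iv}}(\px,s\pv)\,ds$, respectively two factors $\pvi{i}\pvi{j}$ against an iterated integral of $\bbfunc''_{\pvi{i}\pvi{j}}$ --- so substituting $|\pvi{\iv}|<\bdelta$ and summing yields~\eqref{equ:estim:x} and~\eqref{equ:estim:r}, the inner $\int_{0}^{1}s\,ds=\tfrac12$ in~\eqref{equ:rcoord_defn} absorbing the over-counting of an order-$2$ multi-index by ordered index pairs. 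For~\eqref{equ:estim:P} one differentiates the first block $(\px,\pv)\mapsto\aafunc(\px,\phi\pv)$ of $\mhdt{H}{\dif}{\bdelta}{t}$ in a base ($\px$-)coordinate --- which, $\phi$ being independent of $\px$, merely reevaluates the corresponding first $\px$-derivative of $\aafunc$ at $\phi\pv$ --- and then applies~\eqref{equ:fxbv_fxav} to that derivative between the fibre arguments $\phi\pv$ and $\pv$; for~\eqref{equ:estim:R} one writes the second block as $\sum_{\iv}\pvi{\iv}\int_{0}^{1}\bbfunc'_{\pvi{\iv}}(\px,s\phi\pv)\,ds$, differentiates it in a base coordinate, and applies~\eqref{equ:fxbv_fxav} once more to the difference of integrands at $s\phi\pv$ and $s\pv$, producing two factors $\pvi{\iv}\pvi{k}$ against a mixed third derivative of $\bbfunc$. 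The two bounds then follow exactly as above.

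The genuinely delicate case, and the main obstacle, is~\eqref{equ:estim:S}, because $S$ is the fibre-direction block: differentiating the second block $\sum_{\iv}\pvi{\iv}\int_{0}^{1}\bbfunc'_{\pvi{\iv}}(\px,s\phi\pv)\,ds$ in a \emph{fibre} coordinate $\pvi{k}$ now hits $\phi=\phi(\dif,\bdelta,t,\pv)$ as well, and $\partial\phi/\partial\pvi{k}=(1-t)\,\mu'(\nrm{\pv}/\bdelta)\,\pvi{k}/(\bdelta\,\nrm{\pv})$ is only $O(\bdelta^{-1})$ --- the sole source of the $|\mu|_{1}$ term. The chain rule produces three groups. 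The ``$\partial\phi/\partial\pvi{k}$-group'' is, from $\partial_{\pvi{k}}(s\phi\pvi{m})=s\,\partial_{\pvi{k}}\phi\cdot\pvi{m}+s\phi\,\delta_{mk}$, manifestly of the shape $\partial_{\pvi{k}}\phi\cdot\sum_{\iv,m}\pvi{\iv}\pvi{m}(\cdots)$ --- already \emph{quadratic} in $\pv$ --- so on its support $\{\nrm{\pv}<\bdelta\}$ it is $O(\bdelta^{-1})\cdot O(\bdelta^{2}|\bbfunc|_{2,\erdim,\BComp})$, and summing over the $\erdim$ choices of $k$ yields $\erdim\,\bdelta\,|\mu|_{1}\,|\bbfunc|_{2,\erdim,\BComp}$. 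The other two groups, after subtracting from them the two pieces of $S_{\dif}$ obtained by differentiating its own integral form~\eqref{equ:bbfunc}, are ``$\pv\mapsto\phi\pv$'' differences: to one I apply~\eqref{equ:fxbv_fxav} once (extracting one factor $\pvi{m}$ against a second-order integrand, a $\bdelta\,|\bbfunc|_{2,\erdim,\BComp}$ contribution), and the other, which carries an extra factor $\phi$, I expand by differentiating $\tau\mapsto\tau\,\bbfunc''(\px,\tau s\pv)$ in $\tau$ and integrating over $[\phi;1]$ --- this produces a $\bdelta\,|\bbfunc|_{2,\erdim,\BComp}$ part and a $\bdelta^{2}\,|\bbfunc|_{3,\erdim,\BComp}$ part, the latter carrying an extra $\int_{\phi}^{1}\tau\,d\tau$ which, together with the outer $\int_{0}^{1}s^{2}\,ds$, exactly balances the number of ordered triples realizing an order-$3$ multi-index, so the constant is $1$. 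Summing the three groups gives~\eqref{equ:estim:S}. The crux is this compensation --- the $\bdelta^{-1}$ blow-up of $\partial\phi/\partial\pv$ killed by the quadratic vanishing of its factor, and $\mu'$ entering only linearly; once this is seen, the remaining work is careful but mechanical bookkeeping of powers of $\bdelta$ and of index orderings, the sole analytic inputs being~\eqref{equ:fxbv_fxav} and $\int_{0}^{1}s^{j}\,ds=1/(j+1)$.
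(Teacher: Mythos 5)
Your proposal follows essentially the same route as the paper's proof: localize to $\nrm{\pv}<\bdelta$ via the support of $1-\phi$, peel off factors $|\pvi{\iv}|<\bdelta$ by iterating the Hadamard identity~\eqref{equ:fxbv_fxav}, and for the $S$-block observe that the $O(\bdelta^{-1})$ term $\phi'_{\pvi{q}}$ is harmless because it multiplies a coefficient quadratic in $\pv$ and there are only $\erdim$ choices of $q$ (which is exactly inequality~\eqref{equ:delta_phi_t_1} in the paper). Your grouping of the $S$-difference into three pieces (the $\partial_{\pvi{k}}\phi$ group, a $\bbfunc''$ difference, and a $\phi\,\bbfunc''$ difference expanded by an extra $\tau$-derivative) reproduces, after trivial rearrangement, the paper's three summands in its formula for $\rcoord'_{\pvi{q}}$, so the count $2|\bbfunc|_{2,\erdim,\BComp}+\bdelta|\bbfunc|_{3,\erdim,\BComp}+\erdim|\mu|_1|\bbfunc|_{2,\erdim,\BComp}$ comes out the same. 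Your explicit remark that the integrands run over segments $(\px,s\pv)$, so that one should pass to the vertical (star-like) hull of $\BComp$, flags a point the paper's proof treats tacitly (it bounds the integral by the value of $\aafunc'$ at $(\px,\pv)$ itself, which really needs $\BComp$ to be fibrewise star-shaped); note, though, that replacing $\BComp$ by its hull enlarges the right-hand seminorms, so as stated this proves the inequality for fibrewise star-shaped compacta only — which is all that is used downstream, but is worth saying if one wants the lemma literally as written for arbitrary $\BComp$.
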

\begin{proof}
\eqref{equ:estim:x}
Let
\[
    \xcoord_t=(\coo{\xcoord}{1}{t},\ldots,\coo{\xcoord}{\fxdim}{t}):[0;1]\times\Uset\to\bR^{\fxdim},
    \qquad
    \aafunc=(\cco{\aafunc}{1},\ldots,\cco{\aafunc}{\fxdim}):\Uset\to\bR^{\fxdim}
\]
be the coordinate functions of $\xcoord$ and $\aafunc$ respectively.
Then, due to~\eqref{equ:xcoord_defn},
\[
    \coo{\xcoord}{i}{t}(\px,\pv):=
        \sum\limits_{\iv=1}^{\erdim}
            \pvi{\iv}
            \smallint\limits_{\phi(t,\pv)}^{1}
                \ddd{\cco{\aafunc}{i}}{\pvi{\iv}}(\px,s\pv)ds,
    \qquad
    i=1,\ldots,\fxdim.
\]
Let $(t,\px,\pv)\in[0;1]\times\BComp$.
If $\|\pv\|\geq\bConst\bdelta$, then $\coo{\xcoord}{i}{t}(\px,\pv) = 0$.
On the other hand, if $\|\pv\|<\bdelta$ then
$|\coo{\xcoord}{i}{t}(\px,\pv)| \leq \delta \sum\limits_{\iv=1}^{\erdim}  \left|\ddd{\cco{\aafunc}{i}}{\pvi{\iv}}(\px,\pv)\right|$.
Hence
\[
    |\xcoord_t|_{0,\BComp}
    :=
        \sum\limits_{i=1}^{\fxdim}
            \sup\limits_{(\px,\pv)\in\BComp}
                |\coo{\xcoord}{i}{t}(\px,\pv)|
    \leq
        \delta
        \sum_{i=1}^{\fxdim}
        \sum_{\iv=1}^{\erdim}
        \sup_{(\px,\pv)\in\BComp}
            \left|\ddd{\cco{\aafunc}{i}}{\pvi{\iv}}(\px,\pv)\right|
    =:
        \delta |\aafunc|_{1,\erdim,\BComp}.
\]

The inequality~\eqref{equ:estim:r} for $|\rcoord_t|_{0,\BComp}$ follows from~\eqref{equ:rcoord_defn} in a similar way.

\eqref{equ:estim:P}
Since $\phi$ does not depend on $\px$, formula~\eqref{equ:xcoord_defn} also implies that
\[
    \xcoord'_{\px_{p}}(t,\px,\pv) =
    \sum_{i=1}^{\erdim} \pvi{i} \smallint\limits_{\phi(t,\pv)}^{1} \aafunc''_{\px_{p}\pvi{i}}(\px,s\pv)ds,
    \qquad
    p=1,\ldots,\exdim,
\]
which in turn gives the inequality~\eqref{equ:estim:P} for the matrices $P$.

Further notice that
$\phi'_{\pvi{q}}(t,\pv) =
    (1-t) \mu'\Bigl(\tfrac{\|\pv\|}{\bdelta}\Bigr) \,
    \frac{\pvi{q}}{\|\pv\|\bdelta}$,
which implies that
\begin{equation}\label{equ:delta_phi_t_1}
    \bdelta \sum_{q=1}^{\erdim} \sup_{\|\pv\|\leq \bdelta} |\phi'_{\pvi{q}}(t,\pv)| \leq \erdim|\mu|_{1}.
\end{equation}
We also have from~\eqref{equ:rcoord_defn} that
\begin{align*}
    \rcoord'_{\px_p}(t,\px,\pv) &=
    \sum_{i=1}^{\erdim}
    \sum_{j=1}^{\erdim} \pvi{i}  \pvi{j}
        \smallint\limits_{0}^{1}
        \biggl(
            \smallint_{\phi(t,\pv)}^{1} \bbfunc'''_{\pxi{p} \pvi{i} \pvi{j}}(\px,s\tau\pv) d\tau
        \biggr) s \, ds, \\
    \rcoord'_{\pvi{q}}(t,\px,\pv)
    &=  2 \sum_{i=1}^{\erdim} \pvi{i}
            \smallint\limits_{0}^{1}
            \biggl(
                \smallint_{\phi(t,\pv)}^{1} \bbfunc''_{\pvi{i} \pvi{j}}(\px,s\tau\pv) d\tau
            \biggr) s \, ds + \\
    &\quad +
    \sum_{i=1}^{\erdim}
    \sum_{j=1}^{\erdim} \pvi{i}  \pvi{j}
        \smallint\limits_{0}^{1}
        \biggl(
            \smallint_{\phi(t,\pv)}^{1} \bbfunc'''_{ \pvi{q} \pvi{i} \pvi{j}}(\px,s\tau\pv) d\tau
        \biggr) s^2 \, ds -  \\
    &\quad - \phi'_{\pvi{q}}(t,\pv)
    \sum_{i=1}^{\erdim}
    \sum_{j=1}^{\erdim} \pvi{i}  \pvi{j}
        \smallint\limits_{0}^{1}
        \biggl(
            \smallint_{\phi(t,\pv)}^{1} \bbfunc''_{\pvi{i} \pvi{j}}(\px,s\phi(t,\pv)\pv) d\tau
        \biggr) s \, ds,
\end{align*}
which together with~\eqref{equ:delta_phi_t_1} imply inequalities~\eqref{equ:estim:R} and~\eqref{equ:estim:S} for the matrices $R$ and $S$ respectively.
We leave the details for the reader.
\end{proof}

\begin{subcorollary}\label{cor:HH_trunc_1_jet_cont}
The following map
\[
    \theta:\CUF\times[0;+\infty)\times[0;1] \to \Crm{0}{\Uman}{\Mat{\fxdim}{\exdim}\times\Mat{\frdim}{\exdim}\times\Mat{\frdim}{\erdim}},
\]
defined by
\[
    \theta(\dif,\bdelta,t) =
    \begin{cases}
        \bigl(\mhdt{P}{\dif}{\bdelta}{t}, \, \mhdt{R}{\dif}{\bdelta}{t}, \, \mhdt{S}{\dif}{\bdelta}{t}\bigr), & \bdelta>0, \\
        \bigl(P_{\dif}, \, R_{\dif}, \, S_{\dif}\bigr),                                     & \bdelta=0,
    \end{cases}
\]
is $\Wr{3,0}$-continuous.
Hence the following ``\myemph{$\theta$-evaluation}'' map
\begin{gather*}
\widetilde{\theta}:\CUF\times[0;+\infty)\times[0;1]\times\Uman\times\bR^{\exdim} \times \bR^{\exdim} \times \bR^{\erdim} \to
\bR^{\fxdim} \times \bR^{\fxdim} \times \bR^{\frdim}, \\
\widetilde{\theta}\bigl(\dif,\bdelta,t,\pw, \vec{u},\vec{v},\vec{w}\bigr) =
\begin{cases}
    \bigl(\mhdt{P}{\dif}{\bdelta}{t}\vec{u}, \, \mhdt{R}{\dif}{\bdelta}{t}\vec{v}, \, \mhdt{S}{\dif}{\bdelta}{t}\vec{w}\bigr), & \bdelta>0, \\
    \bigl(P_{\dif}\vec{u}, \, R_{\dif}\vec{v}, \, S_{\dif}\vec{w}\bigr),                                     & \bdelta=0,
\end{cases}
\end{gather*}
is $\Wr{\rr}$-continuous for all $\rr\geq3$.
\end{subcorollary}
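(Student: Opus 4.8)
The plan is to reduce the continuity of $\theta$ entirely to the estimates of Lemma~\ref{lm:estimates_on_PRS}, handling separately the open locus $\{\bdelta>0\}$, where continuity is routine, and the critical locus $\{\bdelta=0\}$, where it is the whole point. Fix a compact $\BComp\subset\Uman$; since the $\Wr{r}$ topology on $\CUF$ and the $\Wr{0}$ topology on $\Crm{0}{\Uman}{\ldots}$ are generated by the seminorms $\|\cdot\|_{r,\BComp}$ and $|\cdot|_{0,\BComp}$ respectively, it suffices to estimate $|\theta(\dif,\bdelta,t)-\theta(\dif',\bdelta',t')|_{0,\BComp}$. For $\bdelta>0$ this is immediate: differentiating the explicit formula~\eqref{equ:H_local} for $\mhdt{H}{\dif}{\bdelta}{t}$ in $\px$ and $\pv$ expresses $\mhdt{P}{\dif}{\bdelta}{t}$, $\mhdt{R}{\dif}{\bdelta}{t}$, $\mhdt{S}{\dif}{\bdelta}{t}$ as integrals over $[0;1]$ whose integrands are built from partial derivatives of $\aafunc$ of order $\le2$ and of $\bbfunc$ of order $\le3$, multiplied by $\phi$ and $\phi'_{\pvi{q}}$, both $\Cinfty$ in $(\bdelta,t,\pv)$ for $\bdelta>0$; hence $\theta$ restricted to $\CUF\times(0;+\infty)\times[0;1]$ is continuous into the $\Wr{0}$-topology and factors through the $3$-jet of $\dif$, i.e.\ is $\Wr{3,0}$-continuous there. (The fourth Jacobian block $\mhdt{Q}{\dif}{\bdelta}{t}$ cannot be adjoined to this list: by Lemma~\ref{lm:HadamardLemma_v_bundles1}\ref{enum:g:Jgt} it vanishes near $\ESingMan$ when $t=0$, regardless of $\bdelta$, whereas $Q_{\dif}$ need not — this is exactly Lemma~\ref{lm:general_linearization}\ref{enum:HH_cont:not_cont}.)

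It remains to prove continuity of $\theta$ at an arbitrary point $(\dif_0,0,t_0)$; enlarging $\BComp$ we may assume $s\pw\in\Uman$ for all $s\in[0;1]$ and $\pw\in\BComp$, so that Lemma~\ref{lm:estimates_on_PRS} applies on $\BComp$. Given $\eps>0$, for $\dif=(\aafunc,\bbfunc)\in\CUF$, $\bdelta\in[0;1]$ and $t\in[0;1]$ I would split
\[
\theta(\dif,\bdelta,t)-\theta(\dif_0,0,t_0)
=\bigl(\theta(\dif,\bdelta,t)-(P_{\dif},R_{\dif},S_{\dif})\bigr)
+\bigl((P_{\dif},R_{\dif},S_{\dif})-(P_{\dif_0},R_{\dif_0},S_{\dif_0})\bigr),
\]
using that $\theta(\dif,0,t_0)=(P_{\dif},R_{\dif},S_{\dif})$ does not depend on $t_0$. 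The second summand is a triple of differences of first partial derivatives of coordinate functions of $\dif$ and $\dif_0$, hence has $|\cdot|_{0,\BComp}$-value $<\eps/2$ on a suitable $\Wr{1}$- (so also $\Wr{3}$-) neighbourhood $\UU_1$ of $\dif_0$. For the first summand, inequalities~\eqref{equ:estim:P}, \eqref{equ:estim:R}, \eqref{equ:estim:S} of Lemma~\ref{lm:estimates_on_PRS} give, \emph{uniformly in} $t\in[0;1]$,
\[
\bigl|\theta(\dif,\bdelta,t)-(P_{\dif},R_{\dif},S_{\dif})\bigr|_{0,\BComp}\le\bdelta\cdot C(\dif),
\]
where $C(\dif)$ depends only on $|\mu|_1$, on $\bdelta\le1$, and on the seminorms $|\aafunc|_{2,\BComp}$, $|\bbfunc|_{2,\erdim,\BComp}$, $|\bbfunc|_{3,\BComp}$, $|\bbfunc|_{3,\erdim,\BComp}$, all bounded by a constant $C_0$ on some $\Wr{3}$-neighbourhood $\UU_2$ of $\dif_0$. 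Taking $\delta_0:=\eps/(2C_0)$, for $\dif\in\UU_1\cap\UU_2$, $\bdelta\in[0;\delta_0)$ and all $t\in[0;1]$ one gets $|\theta(\dif,\bdelta,t)-\theta(\dif_0,0,t_0)|_{0,\BComp}<\eps$, which yields $\Wr{3,0}$-continuity at $(\dif_0,0,t_0)$ and finishes the proof that $\theta$ is $\Wr{3,0}$-continuous.

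The claim about the $\theta$-evaluation map is then formal: $\widetilde\theta$ is the composite of $\theta\times\id$, the evaluation map $\ev$ of Lemma~\ref{lm:eval_map} applied on each of the three matrix-valued factors, and the jointly continuous fibrewise multiplications $(A,\vec u)\mapsto A\vec u$; since $\theta$ is $\Wr{3,0}$- and hence $\Wr{r,0}$-continuous for every $r\ge3$ and $\ev$ is $\Wr{0}$-continuous, $\widetilde\theta$ is $\Wr{r}$-continuous for all $r\ge3$. I expect the only genuinely substantial point to be the passage $\bdelta\to0$: one must recognise that among the four Jacobian blocks of $\mhdt{H}{\dif}{\bdelta}{t}$ only $Q$ obstructs continuity there, and that $P$, $R$, $S$ degenerate at a rate governed by a fixed ($3$-jet) seminorm of $\dif$ — which is precisely the content Lemma~\ref{lm:estimates_on_PRS} is designed to deliver; everything else is bookkeeping.
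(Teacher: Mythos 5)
Your argument is correct and is essentially the one the paper gestures at: the paper's proof says only that the corollary is a ``direct consequence of inequalities~\eqref{equ:estim:P}--\eqref{equ:estim:S} whose right hand sides are of the form $\bdelta\,c(\dif,\bdelta,t)$'' with $c$ depending continuously on the $3$-jet of $\dif$, and leaves the details to the reader. Your triangle-inequality split through $(P_{\dif},R_{\dif},S_{\dif})$, the uniform-in-$t$ bound $\bdelta\,C(\dif)$ with $C$ bounded on a $\Wr{3}$-neighbourhood, and the factoring of $\widetilde\theta$ through $\theta\times\id$ and the evaluation map of Lemma~\ref{lm:eval_map} are precisely the bookkeeping the authors omit.
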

\begin{proof}
It is a direct consequence of inequalities~\eqref{equ:estim:P}-\eqref{equ:estim:S} whose right hand sides are of the form $\bdelta c(\dif,\bdelta,t)$, where $c(\dif,\bdelta,t)$ continuously depends on $\bdelta$, $t$ and partial derivatives of $\dif$ up to order $3$.
We leave the details for the reader.
\end{proof}

Say that an $(a\times b)$-matrix $A$ has \myemph{maximal rank}, whenever $\rank(A)=\min\{a,b\}$.

\begin{subcorollary}\label{cor:approx_PS}
Let $a\geq0$, $\BComp \subset \Uman$ be a compact subset such that $\rank(P(\px,0))\geq a$ for all $\pw\in\BComp$.
Then there exist $\eta,\eps>0$ such that $\rank(\mhdt{P}{\wh{\dif}}{\bdelta}{t}(\pw))\geq a$ for all
$(\wh{\dif},\bdelta,t,\pw) \,\in\,\Nbh{\dif}{2}{\BComp}{\eta} \times[0;\eps]\times[0;1]\times\BComp$.

A similar statement holds for matrices $S$, but one should replace $\Nbh{\dif}{2}{\BComp}{\eta}$ with $\Nbh{\dif}{3}{\BComp}{\eta}$.
\end{subcorollary}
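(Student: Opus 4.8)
The plan is to combine lower semicontinuity of the rank with the quantitative bounds of Lemma~\ref{lm:estimates_on_PRS}. Recall that an $(\fxdim\times\exdim)$-matrix has rank $\geq a$ if and only if at least one of its $a\times a$ minors is non-zero, that these minors are polynomial (hence $\Cinfty$) functions of the entries, and that the entries of $P_{\dif}$ are themselves $\Cinfty$ functions on $\Uman$, being first-order partial derivatives of the $\Cinfty$ map $\dif$.

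First I would extract a uniform estimate on $\BComp$. Since $\BComp$ is compact and $\pw\mapsto P_{\dif}(\pw)$ is continuous with rank $\geq a$ everywhere on $\BComp$, the continuous function $\pw\mapsto\max_{I,J}\bigl|\det(P_{\dif})_{I,J}(\pw)\bigr|$, where $I$ ranges over the $a$-element sets of rows and $J$ over the $a$-element sets of columns, is strictly positive on $\BComp$, hence bounded below by some $2\kappa>0$; also the entries of $P_{\dif}$ are bounded, say by $R_0$, on $\BComp$. As each map $B\mapsto\det B_{I,J}$ is Lipschitz on the bounded set of matrices with entries $\leq R_0+1$, there is $\rho\in(0;1]$ with the property that whenever $B$ is a matrix all of whose entries differ by less than $\rho$ from those of $P_{\dif}(\pw)$ for some $\pw\in\BComp$, then $\max_{I,J}|\det B_{I,J}|>\kappa>0$, so $\rank(B)\geq a$.

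Next I would choose $\eta,\eps\in(0;1]$ so small that $|\mhdt{P}{\wh{\dif}}{\bdelta}{t}-P_{\dif}|_{0,\BComp}<\rho$ for every $\wh{\dif}\in\Nbh{\dif}{2}{\BComp}{\eta}$, $\bdelta\in[0;\eps]$, and $t\in[0;1]$; by the previous paragraph this yields $\rank\bigl(\mhdt{P}{\wh{\dif}}{\bdelta}{t}(\pw)\bigr)\geq a$ for all $\pw\in\BComp$ (the endpoint $\bdelta=0$ being included because $\mhdt{H}{\wh{\dif}}{0}{t}=\wh{\dif}$, hence $\mhdt{P}{\wh{\dif}}{0}{t}=P_{\wh{\dif}}$). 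To justify the existence of such $\eta,\eps$, denote by $(\aafunc,\bbfunc)$ and $(\wh{\aafunc},\wh{\bbfunc})$ the local representations of $\dif$ and $\wh{\dif}$, and split
\[
\mhdt{P}{\wh{\dif}}{\bdelta}{t}-P_{\dif}
=\bigl(\mhdt{P}{\wh{\dif}}{\bdelta}{t}-P_{\wh{\dif}}\bigr)+\bigl(P_{\wh{\dif}}-P_{\dif}\bigr).
\]
The second summand has $|P_{\wh{\dif}}-P_{\dif}|_{0,\BComp}\leq\|\wh{\dif}-\dif\|_{2,\BComp}<\eta$, since the entries of $P$ are first-order partials. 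The first summand is bounded by~\eqref{equ:estim:P} applied to $\wh{\dif}$: $|\mhdt{P}{\wh{\dif}}{\bdelta}{t}-P_{\wh{\dif}}|_{0,\BComp}\leq\bdelta\,|\wh{\aafunc}|_{2,\BComp}$, and $\Wr{2}$-closeness gives $|\wh{\aafunc}|_{2,\BComp}\leq|\aafunc|_{2,\BComp}+\|\wh{\dif}-\dif\|_{2,\BComp}<|\aafunc|_{2,\BComp}+1$, so this summand is $\leq\eps\bigl(|\aafunc|_{2,\BComp}+1\bigr)$. Hence $|\mhdt{P}{\wh{\dif}}{\bdelta}{t}-P_{\dif}|_{0,\BComp}<\eta+\eps\bigl(|\aafunc|_{2,\BComp}+1\bigr)$, which is $<\rho$ as soon as $\eta$ and $\eps$ are small enough. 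The argument for the matrices $S$ is identical, using~\eqref{equ:estim:S} instead of~\eqref{equ:estim:P}; there the factor multiplying $\bdelta$ contains $|\wh{\bbfunc}|_{2,\erdim,\BComp}$ and $|\wh{\bbfunc}|_{3,\erdim,\BComp}$, so one has to bound third-order partials of $\wh{\dif}$ uniformly, which is exactly why $\wh{\dif}$ must be taken in the stronger neighborhood $\Nbh{\dif}{3}{\BComp}{\eta}$.

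I expect the only real subtlety to be precisely this dependence of the constants in Lemma~\ref{lm:estimates_on_PRS} on higher-order partials of the \emph{perturbed} map: the factor in front of $\bdelta$ in~\eqref{equ:estim:P} (resp.~\eqref{equ:estim:S}) is not a priori bounded over a $\Wr{1}$-neighborhood of $\dif$, and passing to the $\Wr{2}$- (resp.\ $\Wr{3}$-) topology is what makes it uniformly bounded; once that is in place, the rest is the standard lower-semicontinuity-of-rank argument above.
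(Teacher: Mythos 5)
Your proposal is correct and follows essentially the same route as the paper's proof: a compactness argument yields a uniform rank-stability threshold, and then the triangle-inequality split $\mhdt{P}{\wh{\dif}}{\bdelta}{t}-P_{\dif}=(\mhdt{P}{\wh{\dif}}{\bdelta}{t}-P_{\wh{\dif}})+(P_{\wh{\dif}}-P_{\dif})$ combined with the bounds \eqref{equ:estim:P}, \eqref{equ:estim:S} from Lemma~\ref{lm:estimates_on_PRS} (applied to $\wh{\dif}$, hence the need for $\Wr{2}$ resp.\ $\Wr{3}$ control) keeps the matrices inside that threshold. The only cosmetic difference is that you spell out the lower semicontinuity of rank via minors and a Lipschitz bound, whereas the paper simply asserts the existence of the constant $c_0$.
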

\begin{proof}
1) Suppose that $\rank(P(\pw))\geq a$ for all $\pw\in\BComp$.
Since $\BComp$ is compact and $\dif$ is $\Cr{1}$-differentiable (so $P$ is continuous), there exists $c_0>0$ such that $\rank(P')\geq a$ for every matrix $P'\in\Mat{\fxdim}{\exdim}$ satisfying $\|P'-P(\pw)\|<c_0$ for some $\pw\in\BComp$.

Let $\wh{\dif}=(\wh{\aafunc},\wh{\bbfunc})\in\CUF$ and $\amatr{\wh{P}}{\wh{Q}}{\wh{R}}{\wh{S}}$ be its Jacobi matrix map.
Then, due to~\eqref{equ:estim:P}, for every $\pw\in\BComp$ we have that
\begin{equation}\label{equ:detailed_estim_P}
\begin{aligned}
    |\mhdt{P}{\wh{\dif}}{\bdelta}{t}(\pw) - P(\pw)|
    & \leq
        |\mhdt{P}{\wh{\dif}}{\bdelta}{t}(\pw) - \wh{P}(\pw)|
            +
        |\wh{P}(\pw) - P(\pw)|
    \\
    & \leq
        \bdelta \|\wh{\aafunc}\|_{2,\BComp}
            +
        \|\wh{\dif} - \dif\|_{1,\BComp}
    \\
    & \leq
        \bdelta \|\dif\|_{2,\BComp}
        +
        \bdelta\|\wh{\dif} - \dif\|_{2,\BComp}
        +
        \|\wh{\dif} - \dif\|_{1,\BComp} \\
    & \leq
    \bdelta \|\dif\|_{2,\BComp}
        +
    (\bdelta+1)\,\|\wh{\dif} - \dif\|_{2,\BComp}.
\end{aligned}
\end{equation}
Choose small $\eta,\eps>0$ so that
\[ \eps \|\dif\|_{2,\BComp} + (\eps+1) \eta < c_0.\]
Then for all $(\wh{\dif},\bdelta,t,\pw) \,\in\,\Nbh{\dif}{2}{\BComp}{\eta} \times[0;\eps]\times[0;1]\times\BComp$ we have that $|\mhdt{P}{\wh{\dif}}{\bdelta}{t}(\pw) - P(\pw)| < c_0$, so $\rank(\mhdt{P}{\wh{\dif}}{\bdelta}{t}(\pw))\geq a$.

2) Similarly, suppose that $\rank(S(\pw))\geq a$ for all $\pw\in\BComp$.
Then there exists $c_1>0$ such that $\rank(S')\geq a$ for every matrix $S'\in\Mat{\frdim}{\erdim}$, satisfying $\|S'-S(\pw)\|<c_1$ for some $\pw\in\BComp$.
Assume that $\bdelta<1$ and let $c_2 := 3 + \erdim|\mu|_{1}$.
Then due to~\eqref{equ:estim:S} for each $\wh{\dif}=(\wh{\aafunc},\wh{\bbfunc})\in\CUF$ we have that
\begin{equation}\label{equ:detailed_estim_S}
\begin{aligned}
|\mhdt{S}{\wh{\dif}}{\bdelta}{t}(\pw) - S(\pw)|
&\leq
|\mhdt{S}{\wh{\dif}}{\bdelta}{t}(\pw) - \wh{S}(\pw)|
+
|\wh{S}(\pw) - S(\pw)| \\
&\leq
\bdelta \bigl(
    2 |\wh{\bbfunc}|_{2,\erdim,\BComp} + \bdelta|\wh{\bbfunc}|_{3,\erdim,\BComp} +
    \erdim|\mu|_{1} \cdot |\wh{\bbfunc}|_{2,\erdim,\BComp}
\bigr) +
\|\wh{\dif} - \dif\|_{1,\BComp} \\
&\leq
\bdelta c_2 \|\wh{\dif}\|_{3,\BComp} + \|\wh{\dif} - \dif\|_{1,\BComp} \\
&\leq
\bdelta c_2 \|\dif\|_{3,\BComp} + (\bdelta c_2 + 1)\,\|\wh{\dif} - \dif\|_{3,\BComp}.
\end{aligned}
\end{equation}
Choose $\eta,\eps>0$ so that $\eps < 1$ and
\[ \eps c_2 \|\dif\|_{3,\BComp} + (\eps c_2 + 1) \eta < c_1. \]
Then, for all $(\wh{\dif},\bdelta,t,\pw)\in\Nbh{\dif}{3}{\BComp}{\eta}\times[0;\eps]\times[0;1]\times\BComp$ we have that $|\mhdt{S}{\wh{\dif}}{\bdelta}{t}(\pw) - S(\pw)| < c_1$, so $\rank(\mhdt{S}{\wh{\dif}}{\bdelta}{t}(\pw))\geq a$.
\end{proof}

\begin{subcorollary}\label{cor:approx_emb}
Assume that $\erdim \leq \frdim$, $\exdim \leq \fxdim$.
Suppose a map $\dif\in\CUF$ is injective on some compact subset $\BComp \subset \Uman$, and
\begin{align*}
    &\rank(P_{\dif}(\px,0))=\exdim,
    &
    &\rank(S_{\dif}(\px,0))=\erdim,
\end{align*}
for every $(\px,0)\in\BComp\cap(\bR^{\exdim}\times0)$.
In other words, those matrices induce injective linear maps, which is the same here as having maximal ranks.
Then there exist $\eps,\eta>0$ such that for all $(\wh{\dif},\bdelta,t) \in \Nbh{\dif}{3}{\BComp}{\eta} \times[0;\eps]\times[0;1]$ the map $\mhdt{H}{\wh{\dif}}{\bdelta}{t}$ is injective on $\BComp$.
\end{subcorollary}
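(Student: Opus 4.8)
The plan is to argue by contradiction, using the $\Cr{0}$-estimates \eqref{equ:estim:x}--\eqref{equ:estim:r} of Lemma~\ref{lm:estimates_on_PRS} to confine any loss of injectivity to a shrinking tube around $\ESingMan$, and then to reduce the remaining ``near $\ESingMan$'' case, by rescaling that tube, to the $\Cr{1}$-stability of the injective immersion $\tfib{\dif}$. Before that I would record two consequences of the hypotheses. First, since $\dif$ is a continuous injection of the compact set $\BComp$, for every $\rho>0$ the number $\lambda(\rho):=\inf\{|\dif(\pw)-\dif(\pw')| : \pw,\pw'\in\BComp,\ |\pw-\pw'|\ge\rho\}$ is strictly positive. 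Second, at every $(\px,0)\in\BComp\cap(\bR^{\exdim}\times0)$ the Jacobi matrix of $\dif$ has the block form $\amatr{P_{\dif}}{Q_{\dif}}{0}{S_{\dif}}$ by Lemma~\ref{lm:loc_descr_CEF}\ref{enum:char:CEF}, which is injective because of the hypotheses on the ranks of $P_{\dif}$ and $S_{\dif}$; hence $\dif$ is an immersion on a neighbourhood of $\BComp\cap(\bR^{\exdim}\times0)$ in $\Uman$, and the matrices $P_{\dif}(\px,0),S_{\dif}(\px,0)$ stay of maximal rank and the map $\px\mapsto\aafunc(\px,0)$ stays injective on a neighbourhood of the $\px$-projection of $\BComp\cap(\bR^{\exdim}\times0)$. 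Finally, on the complement of any fixed tubular neighbourhood of $\ESingMan$ the map $\mhdt{H}{\wh\dif}{\bdelta}{t}$ eventually coincides with $\wh\dif$ (property~\ref{enum:v_geq_\bConst_delta}), so there injectivity on $\BComp$ is inherited from $\wh\dif$ as soon as $\wh\dif$ is $\Wr{1}$-close to $\dif$ (one uses here that $\dif$ is a $\Cinfty$ embedding of $\BComp$, which is the setting in which this corollary is applied).

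Assume the conclusion fails: there are $\wh\dif_n\to\dif$ in $\Wr{3}$ on $\BComp$, $\bdelta_n\to0^{+}$, $t_n\in[0;1]$ and $\pw_n\ne\pw_n'$ in $\BComp$ with $\mhdt{H}{\wh\dif_n}{\bdelta_n}{t_n}(\pw_n)=\mhdt{H}{\wh\dif_n}{\bdelta_n}{t_n}(\pw_n')$. Applying \eqref{equ:estim:x}--\eqref{equ:estim:r} with the coordinate functions of $\wh\dif_n$ (whose relevant seminorms stay bounded since $\wh\dif_n\to\dif$ in $\Wr{2}$) gives $|\mhdt{H}{\wh\dif_n}{\bdelta_n}{t_n}-\wh\dif_n|_{0,\BComp}=O(\bdelta_n)$, hence $\mhdt{H}{\wh\dif_n}{\bdelta_n}{t_n}\to\dif$ uniformly on $\BComp$. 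Passing to a subsequence, $\pw_n\to\pw_0$ and $\pw_n'\to\pw_0'$ with $\dif(\pw_0)=\dif(\pw_0')$, so $\pw_0=\pw_0'$ by injectivity of $\dif$ on $\BComp$. If $\pw_0\notin\ESingMan$ then $\|\pw_n\|,\|\pw_n'\|>\bConst\bdelta_n$ for large $n$, so $\mhdt{H}{\wh\dif_n}{\bdelta_n}{t_n}$ coincides with $\wh\dif_n$ near both points and injectivity of $\wh\dif_n$ on $\BComp$ contradicts $\pw_n\ne\pw_n'$.

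There remains the case $\pw_0\in\ESingMan$, which is the crux. Write $\pw_n=(\px_n,\pv_n)$, so $\px_n,\px_n'\to\px_0$ and $\pv_n,\pv_n'\to0$. If $|\pv_n|/\bdelta_n\to\infty$ one shows, by comparing the fibre components of $\mhdt{H}{\wh\dif_n}{\bdelta_n}{t_n}$ at $\pw_n$ and $\pw_n'$ and using injectivity of $S_{\dif}(\px_0,0)$, that then $|\pv_n'|>\bConst\bdelta_n$ as well for large $n$, and one concludes as above. Otherwise the rescaled points $\vec w_n:=\pv_n/\bdelta_n$, $\vec w_n':=\pv_n'/\bdelta_n$ are bounded along a subsequence, and I would pass to the ``blown-up'' maps $\widetilde H_n(\px,\vec w)$ obtained from $\mhdt{H}{\wh\dif_n}{\bdelta_n}{t_n}$ by the substitution $\pv=\bdelta_n\vec w$ together with the rescaling of the fibre component of the output by $\bdelta_n^{-1}$. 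In these variables the factor $\phi=t_n+(1-t_n)\mu(\|\vec w\|)$ from \eqref{equ:H_local} no longer involves $\bdelta_n$, and differentiating one checks that $\widetilde H_n\to\Theta$ in $\Wr{1}$ on a fixed compact neighbourhood of $(\px_0,0)$ (taken large in the $\vec w$-direction so as to contain all $(\px_n,\vec w_n)$ and $(\px_n',\vec w_n')$), where $\Theta(\px,\vec w)=(\aafunc(\px,0),\,S_{\dif}(\px,0)\vec w)$, while $\widetilde H_n(\px_n,\vec w_n)=\widetilde H_n(\px_n',\vec w_n')$. By the rank hypotheses $\Theta$ has injective differential, and by the first paragraph its components make it injective on that neighbourhood (after shrinking its $\px$-size), so $\Theta$ restricts there to a $\Cinfty$ embedding; hence $\widetilde H_n$ is injective there for large $n$, forcing $(\px_n,\vec w_n)=(\px_n',\vec w_n')$, i.e.\ $\pw_n=\pw_n'$ --- a contradiction. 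Pairs with one of $\pw_n,\pw_n'$ inside and the other outside $\Rman_{\bConst\bdelta_n}$ are absorbed into one of the two cases. The main obstacle is precisely this $\Wr{1}$-convergence of the blown-up maps $\widetilde H_n$ across the transition annulus $\aConst\bdelta_n\le\|\pv\|\le\bConst\bdelta_n$, where $\phi$ varies and the $\Cr{2}$-norm of $\mhdt{H}{\wh\dif_n}{\bdelta_n}{t_n}$ itself is unbounded as $\bdelta_n\to0$; the rescaling is exactly what tames this, and one must also verify that the domains of the $\widetilde H_n$ eventually contain the chosen compact neighbourhood.
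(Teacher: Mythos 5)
Your argument is correct in outline and takes a genuinely different route from the paper's. Both proofs open identically: assume failure, extract sequences $(\wh{\dif}_n,\bdelta_n,t_n)\to(\dif,0,t)$ with colliding pairs $\pw_n\ne\pw_n'$ in $\BComp$, force the pairs to a common limit $\pw_0$ using $\Wr{0}$-continuity of the $\HH$-evaluation map (Lemma~\ref{lm:general_linearization}\ref{enum:HH_cont:00}) together with injectivity of $\dif$, and dispose of $\pw_0\notin\ESingMan$ by the support property~\ref{enum:v_geq_\bConst_delta}. They part ways at the crux case $\pw_0\in\ESingMan$. The paper normalizes the increments $\pvi{i}-\pwi{i}$ (respectively $\pxi{i}-\pyi{i}$) and passes to the limit of difference quotients of the coordinate components of $\mhdt{H}{\wh{\dif}_i}{\bdelta_i}{t_i}$, using the $\Wr{3}$-continuity of the $\theta$-evaluation map (Corollary~\ref{cor:HH_trunc_1_jet_cont}) to conclude that $S_{\dif}(\px_0,0)$ or $P_{\dif}(\px_0,0)$ annihilates a unit vector, contradicting the rank hypotheses. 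You instead blow up the fibre variable by $\bdelta_n$, which freezes the cut-off annulus at $\aConst\le\|\vec w\|\le\bConst$, prove $\Cr{1}$-convergence of the rescaled maps $\widetilde H_n$ on a fixed compact box to the embedding $\Theta(\px,\vec w)=(\aafunc(\px,0),S_{\dif}(\px,0)\vec w)$, and invoke $\Cr{1}$-stability of injectivity. The blow-up is conceptually tidier: it sidesteps the paper's case split between fibre- and base-direction increments, and it makes the role of the transition annulus transparent --- after rescaling, the $\widetilde H_n$ form a $\Cr{1}$-equicontinuous family even though the unrescaled $\mhdt{H}{\wh{\dif}_n}{\bdelta_n}{t_n}$ have $\Cr{2}$-norms blowing up like $\bdelta_n^{-1}$ there. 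The price is the bookkeeping you yourself flag: the domains of $\widetilde H_n$ must eventually contain the chosen box, and the exhaustiveness of your subcases needs tightening --- your ``Otherwise'' only yields boundedness of $\vec w_n$ along a subsequence, so the mixed case $\vec w_n$ bounded, $\vec w_n'$ unbounded must be explicitly thrown back to the same fibre-component comparison you sketch for Subcase~A with $\pw_n,\pw_n'$ swapped. These are routine to fill; the idea is sound.
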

\begin{proof}
Suppose our statement fails.
Then there exist a sequence
\[
(\dif_i,\bdelta_i,t_i) \,\in\, \CUF\times(0;+\infty) \times[0;1], \qquad i\in\bN,
\]
and two sequences $\{(\pxi{i},\pvi{i})\}_{i\in\bN}, \{(\pyi{i},\pwi{i})\}_{i\in\bN} \subset \BComp$ of mutually distinct points
such that
\begin{gather*}
    \lim\limits_{i\to\infty}\|\dif-\dif_i\|_{3,\BComp} = 0,
    \qquad
    \lim\limits_{i\to\infty}\bdelta_i = 0, \\
    \mhdt{H}{\dif_i}{\adelta_i}{t_i}(\pxi{i},\pvi{i})=\mhdt{H}{\dif_i}{\adelta_i}{t_i}(\pyi{i},\pwi{i}), \ i\in\bN,
\end{gather*}
One can assume, in addition, that
\begin{itemize}[label={--}, leftmargin=*]
\item
each $\dif_i$ is an embedding near $\BComp$, since $\dif$ is so and embeddings near a compact subset are open in any topology $\Wr{\rr}$ for $\rr\geq1$;
\item
$\lim\limits_{i\to\infty}(\pxi{i},\pvi{i})=(\px,\pv)$,
$\lim\limits_{i\to\infty}(\pyi{i},\pwi{i})=(\py,\pw)$,
$\lim\limits_{i\to\infty} t_i = t$
for some $(\px,\pv), (\py,\pw)\in\BComp$ and $t\in[0;1]$, due to compactness of $\BComp\times[0;1]$.
\end{itemize}
Since the $\HH$-evaluation map is $\Wr{3}$-continuous, see Lemma~\ref{lm:general_linearization}, we obtain that
\[ \dif(\px,\pv)
    = \lim_{i\to\infty}\mhdt{H}{\dif_i}{\adelta_i}{t_i}(\pxi{i},\pvi{i})
    = \lim_{i\to\infty} \mhdt{H}{\dif_i}{\adelta_i}{t_i}(\pyi{i},\pwi{i})
    = \dif(\py,\pw),
\]
whence $(\px,\pv) = (\py,\pw)$ since $\dif$ is injective on $\BComp$.

We clam that then $\pv=0$.
Indeed, suppose $\|\pv\|>a$ for some $a>0$.
Then for sufficiently large $i$ we have that $\delta_i < a < \min\{\|\pvi{i}\|, \|\pwi{i}\|\}$.
Hence by~\ref{enum:v_geq_\bConst_delta} before
Lemma~\ref{lm:general_linearization}
\[
    \dif_i(\pxi{i},\pvi{i})
        = \mhdt{H}{\dif_i}{\adelta_i}{t_i}(\pxi{i},\pvi{i})
        = \mhdt{H}{\dif_i}{\adelta_i}{t_i}(\pyi{i},\pwi{i})
        = \dif_i(\pyi{i},\pwi{i}),
\]
and therefore $(\pxi{i},\pvi{i})=(\pyi{i},\pwi{i})$ since each $\dif_i$ is injective.
This contradicts to the assumption that those points are distinct.

Let $\mhdt{H}{\dif_i}{\bdelta_i}{t_i} = (\mhdt{\aafunc}{\dif_i}{\bdelta_i}{t_i}, \mhdt{\bbfunc}{\dif_i}{\bdelta_i}{t_i}):\Uman\to\bR^{\fxdim}\times\bR^{\frdim}$ be the coordinate functions of $\mhdt{H}{\dif_i}{\bdelta_i}{t_i}$.
Consider two cases.

1) Suppose that $\pvi{i}\not=\pwi{i}$ for all $i\in\bN$.
Then we can assume that the following sequence of unit vectors $\pu_i := \frac{\pvi{i}-\pwi{i}}{\|\pvi{i}-\pwi{i}\|} \in \bR^{\erdim}$, $i\in\bN$,  converges to some unit vector $\pu$.
Then, by $\Wr{3}$-continuity of $\theta$-evaluation map $\widetilde{\theta}$, see Corollary~\ref{cor:HH_trunc_1_jet_cont},
\[
    S_{\dif}(\px,0) \pu
        = \lim\limits_{i\to\infty}
            \frac{
                \mhdt{\bbfunc}{\dif}{\adelta_i}{t_i}(\pxi{i},\pvi{i})
                -
                \mhdt{\bbfunc}{\dif}{\adelta_i}{t_i}(\pyi{i},\pwi{i})
            }
            {
                \|\pvi{i}-\pwi{i}\|
            } \equiv 0,
\]
which contradict to the assumption that $S_{\dif}(\px,0)$ induces an injective linear map.

2) Otherwise, we can assume that $\pxi{i}\not=\pyi{i}$ for all $i\in\bN$.
Then, as in the previous case, we can assume that the following sequence of unit vectors $\tau_i := \frac{\pxi{i}-\pyi{i}}{\|\pxi{i}-\pyi{i}\|} \in \bR^{\exdim}$, $i\in\bN$, converges to some unit vector $\tau$.
Then, by $\Wr{3}$-continuity of $\theta$-evaluation map,
\[
    P_{\dif}(\px,0) \tau
        = \lim\limits_{i\to\infty}
            \frac{
                \mhdt{\aafunc}{\dif}{\adelta_i}{t_i}(\pxi{i},\pvi{i})
                -
                \mhdt{\aafunc}{\dif}{\adelta_i}{t_i}(\pyi{i},\pwi{i})
            }
            {
                \|\pxi{i}-\pyi{i}\|
            } \equiv 0,
\]
which contradict to the assumption that $P_{\dif}(\px,0)$ induces an injective linear map.
\end{proof}
\section{Proof of Theorem~\ref{th:pres_max_rank}}\label{sect:proof:th:pres_max_rank}
Due to Lemma~\ref{lm:XX_is_HH_invariant} it suffices to show that for each $\dif\in\XX$ there exists a $\Wr{3}$-neighborhood $\UU$ in $\XX$ and $\eps>0$ such that
\[
\HH(\UU\times[0;\eps]\times[0;1]) \subset \XX.
\]
It will be convenient to say that $(\UU,\eps)$ is \myemph{admissible} for $\dif$ with respet to $\XX$.

\begin{enumerate}[wide, label={\rm\arabic*)}, itemsep=1ex]
\item[\ref{eqnum:fin:Chora}]
Suppose $\XX=\CPNFx{a}$.
As $\ESingMan$ is compact, there exist finitely many local trivializations of $\dif$
\[
    \hat{\dif}_i = (\aafunc_i,\bbfunc_i): \bR^{\exdim}\times\bR^{\erdim} \supset \Uset_i \to \COset_i\times\bR^{\frdim},
    \qquad
    i=1,\ldots,s,
\]
with respect to some trivialized local charts $\BChart_i$ and $\CChart_i$, and for each $i$ a compact subset $\BComp_i \subset\Uset_i$ such that
$\ESingMan = \mathop{\cup}\limits_{i=1}^{s}\Phi_i(\BComp_i)$.
Then, by Corollary~\ref{cor:approx_PS}, there exist $\eta_i,\eps_i>0$ such that $\rank(\mhdt{P}{\dif'}{\bdelta}{t}(\pw))\geq a$ for all $(\dif',\bdelta,t,\pw) \,\in\, \Nbh{\hat{\dif}_i}{2}{\BComp_i}{\eta_i} \times[0;\eps_i]\times[0;1]\times\BComp_i$.
Define the following $\Wr{2}$-open neighborhood of $\dif$ in $\CNF$:
\[
\UU_i =
    \bigl\{
        \wh{\dif}\in\CNF
            \mid
        \wh{\dif}(\BChart_i(\BComp_i)) \subset \CChart_i(\COset_i\times\bR^{\frdim})
        \ \text{and} \
        \| \CChart_i^{-1} \circ \wh{\dif} \circ \BChart_i - \dif\|_{2,\BComp_i} < \eta_i 
    \bigr\}.
\]
Then the pair $\bigl(\mathop{\cap}\limits_{i=1}^{s} \UU_i, \min\limits_{i=1,\ldots,s} \eps_i\bigr)$ is admissible for $\dif$ with respect to $\CPNFx{a}$.

\item[\ref{eqnum:fin:Cvertba}]
The proof for $\XX=\CSNFx{b}$ is literally the same and based on the part of Corollary~\ref{cor:approx_PS} for matrices $S$, but in that case $\UU$ will be only $\Wr{3}$-open.

\item[\ref{eqnum:fin:Cplusab}]
Suppose $\XX=\CPSNFx{a}{b}=\CPNFx{a}\cap\CSNFx{b}$.
Let $(\UU_{|},\eps_{|})$ and $(\UU_{-},\eps_{-})$ be admissible for $\dif$ pairs with respect to $\CPNFx{a}$ and $\CSNFx{b}$ correspondingly.
Then $(\UU_{|}\cap\UU_{-},\min\{\eps_{|},\eps_{-}\})$ is admissible for $\dif$ with respect to $\CPSNFx{a}{b}$.

\item[\ref{eqnum:fin:Embplusab}]
Let $\XX = \EPSNFx{\exdim}{\erdim}$ be the subset of $\CPSNFx{\exdim}{\erdim}$ consisting of embeddings and $\dif\in\XX$.
In this case we should have that $\exdim\leq\fxdim$ and $\erdim\leq\frdim$.

Let $\Rman_{a}$ be a tubular neighborhood of $\ESingMan$ in $\ETotMan$ contained in $\BNbh$.
Then similarly to the previous cases 1) and 2) and also by Corollary~\ref{cor:approx_emb} there exist
finitely many local trivializations $\hat{\dif}_i = (\aafunc_i,\bbfunc_i): \bR^{\exdim}\times\bR^{\erdim} \supset \Uset_i \to \COset_i\times\bR^{\frdim}$ of $\dif$, $i=1,\ldots,s,$ with respect to some trivialized local charts $\BChart_i$ and $\CChart_i$, and for each $i$ a pair of compact subset $\BComp_i, \CComp_i \subset\Uset_i$ with non-empty interiors, $\eta_i,\eps_i>0$ such that
\begin{enumerate}[leftmargin=*, label={\rm(\alph*)}]
\item\label{enum:4:a} $\BComp_i \subset \Int{\CComp_i}$ and $\ESingMan \subset \mathop{\cup}\limits_{i=1}^{s}\Phi_i(\Int{\BComp_i})$;
\item\label{enum:4:b} $\rank(\mhdt{P}{\dif'}{\bdelta}{t}(\pw))=\exdim$, $\rank(\mhdt{S}{\dif'}{\bdelta}{t}(\pw))=\erdim$, and $\mhdt{H}{\dif'}{\bdelta}{t}|_{\CComp_i}:\CComp_i \to \COset_i\times\bR^{\frdim}$ is injective for all $(\dif',\bdelta,t,\pw) \,\in\, \Nbh{\hat{\dif}_i}{3}{\CComp_i}{\eta_i} \times[0;\eps_i]\times[0;1]\times\CComp_i$;
\item\label{enum:4:c}
in particular, the rank of the Jacobi matrix of $\mhdt{H}{\dif'}{\bdelta}{t}$ at each $\pw\in\CComp_i$ is equal to $\exdim+\erdim = \dim(\BNbh)$, whence \myemph{$\mhdt{H}{\dif'}{\bdelta}{t}$ is an embedding near $\CComp_i$}.
\end{enumerate}

For $i=1,\ldots,s$ define the following $\Wr{3}$-open neighborhood of $\dif$ in $\EPSNFx{\exdim}{\erdim}$:
\[
\UU_i =
    \bigl\{
        \wh{\dif}\in\EPSNFx{\exdim}{\erdim}
            \mid
        \wh{\dif}(\BChart_i(\CComp_i)) \subset \CChart_i(\COset_i\times\bR^{\frdim})
        \ \text{and} \
        \|\CChart_i^{-1} \circ \wh{\dif} \circ \BChart_i\|_{3,\CComp_i}  < \eta_i
    \bigr\}.
\]
Put $\UU' = \mathop{\cap}\limits_{i=1}^{s} \UU_i$ and $\eps' = \min\limits_{i=1,\ldots,s} \eps_i$.
Then for all $(\wh{\dif},\bdelta,t) \,\in\, \UU'\times[0;\eps']\times[0;1]$
\begin{enumerate}[leftmargin=*, label={\rm(\alph*)}, resume]
\item\label{enum:4:i1}
$\mhdt{H}{\wh{\dif}}{\bdelta}{t}=\wh{\dif}$ on $\BNbh\setminus\Rman_{\eps'}$, and in particular, $\mhdt{H}{\wh{\dif}}{\bdelta}{t}$ is an embedding on $\BNbh\setminus\Rman_{\eps'}$;
\item\label{enum:4:i2}
$\mhdt{H}{\wh{\dif}}{\bdelta}{t}$ is an embedding near each compact set $\Phi_i(\CComp_i)$, $i=1,\ldots,s$, due to~\ref{enum:4:c}.
\end{enumerate}
However, this only implies that $\mhdt{H}{\wh{\dif}}{\bdelta}{t}$ is an immersion.
To make $\mhdt{H}{\wh{\dif}}{\bdelta}{t}$ embedding on all of $\BNbh$ we will \myemph{decrease $\UU'$ and $\eps'$ as follows}.

By~\ref{enum:4:a}, there exists $\eps''\in(0;\eps')$ such that $\Rman_{\eps''} \subset \mathop{\cup}\limits_{i=1}^{s}\Phi_i(\Int{\BComp_i})$.
Since $\dif$ is injective, one can choose open neighborhoods $\Vman,\Wman\subset\FTotMan$ of disjoint compact sets $\dif(\ESingMan)$ and $\dif(\overline{\BNbh\setminus\Rman_{\eps''}})$ respectively such that $\Vman\cap\Wman=\varnothing$.
Define the following $\Wr{0}$-open neighborhood $\VV$ of $\dif$ in $\EPSNFx{\exdim}{\erdim}$:
\[
\VV = \bigl\{ \wh{\dif}\in\EPSNFx{\exdim}{\erdim} \mid \wh{\dif}(\BChart_i(\BComp_i)) \subset \Vman, \ \wh{\dif}(\overline{\BNbh\setminus\BChart_i(\CComp_i)}) \subset \Wman, \ i=1,\ldots,s \bigr\}
\]
Note that by Lemma~\ref{lm:general_linearization} the map $\HH:\CNF\times[0;+\infty)\times[0;1]\to\CNF$ is $\Wr{0,0}$-continuous, and $\HH(\dif,0,t) = \dif$.
Therefore, $\HH^{-1}(\VV)$ is a $\Wr{0}$-open neighborhood of $\dif\times 0 \times[0;1]$, whence there exists another neighborhood $\UU \subset \UU'\cap\HH^{-1}(\VV)$ and $\eps<b$ such that $\HH(\UU\times[0;\eps]\times[0;1]) \subset \UU'\cap\VV$.

We claim that \myemph{$\HH(\UU\times[0;\eps]\times[0;1])$ consists of injective maps}, which will finally imply that
$\HH(\UU\times[0;\eps]\times[0;1]) \subset \EPSNFx{\exdim}{\erdim}$, i.e.\ that $(\UU,\eps)$ is admissible for $\dif$ with respect to $\EPSNFx{\exdim}{\erdim}$.

Indeed, suppose there exist $(\wh{\dif},\bdelta,t) \,\in\, \UU\times[0;\eps]\times[0;1]$ and two distinct points $\pw_1\not=\pw_2\in\BNbh$ such that $\mhdt{H}{\wh{\dif}}{\bdelta}{t}(\pw_1) = \mhdt{H}{\wh{\dif}}{\bdelta}{t}(\pw_2)$.
If $\pw_1\in\Phi_{i}(\BComp_{i})$ for some $i\in\{1,\ldots,s\}$, then, due to injectivity of $\mhdt{H}{\wh{\dif}}{\bdelta}{t}$ on $\Phi_i(\CComp_i)$, we must have that $\pw_2\in\BNbh\setminus\Phi_{i}(\CComp_{i})$.
But then by~\ref{enum:4:i1},
\[
    \mhdt{H}{\wh{\dif}}{\bdelta}{t}(\pw_1)=\mhdt{H}{\wh{\dif}}{\bdelta}{t}(\pw_2)
    \ \in \
    \mhdt{H}{\wh{\dif}}{\bdelta}{t}(\BChart_i(\BComp_i)) \cap \mhdt{H}{\wh{\dif}}{\bdelta}{t}(\BNbh\setminus\BChart_i(\CComp_i))
    \ \subset \
    \Vman\cap\Wman
    \ = \
    \varnothing,
\]
which gives a contradiction.

Therefore $\pw_1,\pw_2\in \BNbh\setminus \mathop{\cup}\limits_{i=1}^{s}\Phi_i(\Int{\BComp_i}) \subset \BNbh\setminus\Rman_{\eps''} \subset  \BNbh\setminus\Rman_{\eps}$.
Since $\bdelta<\eps$, we have that $\wh{\dif}(\pw_1)= \mhdt{H}{\wh{\dif}}{\bdelta}{t}(\pw_1)=\mhdt{H}{\wh{\dif}}{\bdelta}{t}(\pw_2) = \wh{\dif}(\pw_2)$ which is also impossible, since $\wh{\dif}$ is injective.

\item[\ref{eqnum:fin:Emb}]
If $\exdim=\dim(\ESingMan)=\dim(\FSingMan)=\fxdim$ and $\exdim+\erdim = \dim(\ETotMan)=\dim(\FTotMan)=\fxdim+\frdim$, then $\EmbNF \equiv \EPSNFx{\exdim}{\erdim}$.
Therefore~\ref{eqnum:fin:Emb} is a particular case of~\ref{eqnum:fin:Embplusab}.
\qed
\end{enumerate}
\section{Proof of Theorem~\ref{th:linearization_of_embeddings}}\label{sect:proof:th:linearization_method}
Let $\ESingMan$ be a compact manifold, $\Epr:\ETotMan\to\ESingMan$ be a vector bundle equipped with some orthogonal structure, and $\BNbh\subset\ETotMan$ be a smooth submanifold being also a neighborhood of $\ESingMan$.

Evidently that for every $\Wr{\infty}$-continuous function $\adelta:\CNE\to(0;+\infty)$ the map~\eqref{equ:map_H} is a $\adelta$-linearizing homotopy, whence by Corollary~\ref{cor:hom_eq_case_CNF} it always extends to a $\Wr{\infty}$-continuous map $\Hhom:\EmbNE\times[0;1]\to\CNE$.
Moreover, $\Hhom(\dif,1)=\tfib{\dif}=\tndif(0,\cdot)$ on $\Rman_{\aConst\adelta(\dif)}$, see~\eqref{equ:Hht_general_formula}, so statement~\ref{enum:mainth:H_0} of Theorem~\ref{th:linearization_of_embeddings} also always hold.

We will now choose $\adelta$ so that the image of $\Hhom$ will be contained in $\EmbNE$.
Let $\hat{\ESingMan}$ be a connected component of $\ESingMan$ contained in a connected component $\hat{\ETotMan}$ of $\ETotMan$, $\hat{\BNbh} = \BNbh\cap\hat{\ETotMan}$, $\exdim=\dim(\hat{\ESingMan})$, and $\erdim=\dim(\hat{\Mman})-\exdim$.
Then by~\ref{eqnum:fin:Emb} of Theorem~\ref{th:pres_max_rank} there exists a $\Wr{3}$-continuous function $\adelta^{\exdim,\erdim}:\ChNE\to(0;+\infty)$ such that the corresponding $\adelta^{\exdim,\erdim}$-linearizing homotopy $\Hhom^{\exdim,\erdim}:\ChNE\times[0;1]\to\ChNE$ leaves invariant the set $\EPSAB{\exdim}{\erdim}{\hat{\BNbh}}{\ETotMan}=\mathcal{E}(\hat{\BNbh},\ETotMan)$ of embeddings $\hat{\BNbh}\subset\ETotMan$.

Let $\adelta:\CNE\to(0;+\infty)$ be the minimum of all functions $\adelta^{\exdim,\erdim}$, where $\exdim$ runs over dimensions of connected components of $\ESingMan$, and $\exdim+\erdim$ runs over dimensions of connected components of $\ETotMan$.
Then for every $\dif\in\EmbNE$ and $t\in[0;1]$ the map $\Hhom(\dif,t):\BNbh\to\ETotMan$ is an embedding.

Finally, due to the estimate~\eqref{equ:estim:r}, one can additionally decrease $\adelta$ so that $\Hhom(\dif,t)(\Rman_{\adelta(\dif)}) \subset \BNbh$ for all $\dif\in\EmbNE$ and $t\in[0;1]$, which proves statement~\ref{enum:mainth:H_Rd} of Theorem~\ref{th:linearization_of_embeddings} as well.
\qed

\subsection*{Acknowledgement}
The authors are sincerely grateful to D.~Bolotov for fruitful discussions of vector bundle automorphisms.


\def\cprime{$'$}

\end{document}